\theoremstyle{plain}
\newtheorem{thm}{Theorem}
\newtheorem{theorem}{Theorem}[section]
\newtheorem{lemma}[theorem]{Lemma}
\newtheorem{proposition}[theorem]{Proposition}
\newtheorem{corollary}[theorem]{Corollary}
\theoremstyle{definition}
\newtheorem{remark}[theorem]{Remark}
\newtheorem{example}[theorem]{Example}
\newcommand\bA{{\mathbb A}}
\newcommand\bG{{\mathbb G}}
\newcommand\bP{{\mathbb P}}
\newcommand\bQ{{\mathbb Q}}
\newcommand\bV{{\mathbb V}}
\newcommand\bZ{{\mathbb Z}}
\newcommand\bfmu{\boldsymbol{\mu}}
\newcommand\cA{{\mathcal A}}
\newcommand\cB{{\mathcal B}}
\newcommand\cC{{\mathcal C}}
\newcommand\cF{{\mathcal F}}
\newcommand\cH{{\mathcal H}}
\newcommand\cI{{\mathcal I}}
\newcommand\cJ{{\mathcal J}}
\newcommand\cL{{\mathcal L}}
\newcommand\cO{{\mathcal O}}
\newcommand\cT{{\mathcal T}}
\newcommand\cHom{{\cH}{\rm om}}
\newcommand\fm{\mathfrak{m}}
\newcommand\fn{\mathfrak{n}}
\newcommand\ch{{\rm ch}}
\renewcommand\deg{{\rm deg}}
\renewcommand\div{{\rm div}}
\newcommand\fr{{\rm fr}}
\newcommand\gp{{\rm gp}}
\newcommand\mult{{\rm mult}}
\newcommand\pr{{\rm pr}}
\newcommand\reg{{\rm reg}}
\newcommand\tr{{\rm tr}}
\newcommand\Aut{{\rm Aut}}
\DeclareMathOperator\Cl{{\rm Cl}}
\newcommand\Der{{\rm Der}}
\newcommand\GL{{\rm GL}}
\DeclareMathOperator\Hom{Hom}
\newcommand\Lie{{\rm Lie}}
\newcommand\Pic{{\rm Pic}}
\newcommand\Spec{{\rm Spec}}
\DeclareMathOperator\Stab{Stab}
\DeclareMathOperator\Supp{Supp}
\newcommand\Sym{{\rm Sym}}
\numberwithin{equation}{section}
\title{Equivariantly normal varieties \\
for diagonalizable group actions}
\author{Michel Brion}
\date{}
\begin{document}

\maketitle

\begin{abstract}
Given a finite group scheme $G$ over a field and 
a $G$-variety $X$, we obtain a criterion for $X$ 
to be $G$-normal in the sense of \cite{Br24}.  
When $G$ is diagonalizable, we describe the local
structure of $G$-normal varieties in codimension 
$1$ and their dualizing sheaf. As an application, 
we obtain a version of the Hurwitz formula for 
$G$-normal varieties, where $G$ is linearly 
reductive.
\end{abstract}

\section{Introduction}
\label{sec:int}

This paper is a sequel to \cite{Br24},
which considered actions of finite group 
schemes over fields of positive characteristic. 
In this setting, an action on a variety
$X$ does not necessarily lift to an
action on the normalization $\tilde{X}$; 
as a remedy, the notion of equivariant
normalization was introduced and explored. 
The present paper focuses on $G$-normal 
varieties, where $G$ is a finite 
diagonalizable group scheme, 
with two main motivations in mind: 
firstly, finite diagonalizable group
schemes have an especially simple structure 
and representation theory, which makes them 
very accessible. Secondly, they are quite
ubiquitous; for example, the study of
$G$-normal varieties with $G$ linearly reductive
reduces to some extent to the diagonalizable 
case (see Remark \ref{rem:linred} for details,
and Section \ref{sec:lrgs} for an illustration).

Given a finite diagonalizable group scheme
$G$ acting faithfully on a $G$-normal 
variety $X$ with categorical quotient 
$\pi : X \to Y$, 
the variety $Y = X/G$ is known to be normal 
and $\pi$ is a $G$-torsor over a dense open 
subset of $Y$. We may thus view $X$ as a
``ramified $G$-cover'' of $Y$ 
(as a partial converse,
every variety obtained as a $G$-torsor
over a normal variety is $G$-normal). 
When $G$ is constant, $\pi$ is indeed 
a ramified Galois cover with group $G$
and $X$ is normal; this gives back
the classically studied abelian covers, 
see e.g.~\cite{Pardini, AP}. But the general
case has quite different geometric features: 
if $G$ is infinitesimal, then $\pi$ is 
purely inseparable,
and hence ramified everywhere. Moreover,
$X$ is generally singular in codimension $1$,
with cuspidal singularities only. Still, 
$X$ satisfies Serre's property $(S_2)$, 
as well as an equivariant version of $(R_1)$: 
every $G$-stable divisor is Cartier at its
generic points. 

For simplicity, we now state our main results 
in the setting of curves over an algebraically
closed field $k$. Let $X$ be such a curve,
equipped with a faithful $G$-action with 
quotient $\pi : X \to Y$. We assume that
$X$ is $G$-normal; this is equivalent to
the orbit $G \cdot x \subset X$ 
being a Cartier divisor for any $x \in X(k)$,
and implies that the curve $Y$ is smooth. 
Denote by $\Stab_G(x) \subset G$ the stabilizer 
of $x$; it acts linearly on the fibre at $x$ 
of the conormal sheaf to $G \cdot x$ 
via a character $\nu(x) : \Stab_G(x) \to \bG_m$.
We may now describe the equivariant local 
structure of $X$ at $x$:

\begin{thm}\label{thm:local}

\begin{enumerate}

\item[{\rm (i)}] The group scheme
$H = \Stab_G(x)$ is cyclic, and $\nu(x)$
generates its character group.

\item[{\rm (ii)}] There exists
an open $G$-stable neighborhood 
$U = U(x) \subset X$ of $x$
such that the quotient morphism
$U \to U/G = V$ factors as 
$U \stackrel{\varphi}{\longrightarrow}
U/H \stackrel{\psi}{\longrightarrow} V$,
where $\varphi$ is a cyclic cover 
of degree $\vert H \vert$, 
and $\psi$ is a $G/H$-torsor.

\end{enumerate}

\end{thm}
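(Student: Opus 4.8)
The plan is to translate everything into the language of $M$-graded algebras, where $M = \hat{G}$ is the (finite) character group, and then carry out a local analysis at $x$ using the normalization of the curve. Recall $G = \Spec k[M]$, so a $G$-action on $X$ is the same as an $M$-grading $\cO_X = \bigoplus_{m \in M} \cO_X^{(m)}$; a subgroup $H \subseteq G$ corresponds to a surjection $M \twoheadrightarrow \hat H = M/N$, the quotient $G/H$ to the subgroup $N = \widehat{G/H} \subseteq M$, and taking $H$- (resp.\ $G$-) invariants amounts to keeping the graded pieces of degree in $N$ (resp.\ in $0$). I would pass to the henselian local ring $R = \cO_{X,x}^{h}$, which inherits the $M$-grading and is finite over $R_0 = \cO_{Y,y}^{h}$, a discrete valuation ring with uniformizer $t$ since $Y$ is smooth. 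Because $\pi$ is a $G$-torsor over a dense open subset, each $K_m$ (where $K = \mathrm{Frac}\,R$, $K_0 = \mathrm{Frac}\,R_0$) is a one-dimensional $K_0$-vector space, so every graded piece $R_m$ is a rank-one $R_0$-lattice in $K_m$.

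For part (i), I would use that the normalization $\nu : \tilde X \to X$ is $G$-equivariant, that $\tilde X$ is a smooth curve, and---since $X$ is unibranch in codimension $1$ by $G$-normality---that there is a single point $\tilde x$ over $x$, with $\Stab_G(\tilde x) = H$. Then $H$ acts on the cotangent line $\fm_{\tilde x}/\fm_{\tilde x}^2 \cong k$ through a character $\chi$, and the faithfulness of the $H$-action on the smooth germ $(\tilde X, \tilde x)$ forces the homomorphism $H \to \bG_m$ given by $\chi$ to be a closed immersion; a diagonalizable subgroup scheme of $\bG_m$ is cyclic, so $H \cong \mu_e$ with $e = \vert H\vert$ and $\chi$ generates $\hat H$. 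It remains to identify $\chi$ with $\nu(x)$: writing $\cI_Z = (f)$ for a homogeneous generator $f$ of degree $d$ of the ideal of the Cartier orbit $Z = G\cdot x$, the conormal character $\nu(x)$ is the image $\bar d$ of $d$ in $\hat H$, and pulling $f$ back along $\nu$ shows $\bar d = \chi^{\pm 1}$, whence $\nu(x)$ generates $\hat H$. Equivalently, letting $v$ be the valuation of $\tilde R$ normalized by $v(t) = e$, the assignment $w : M \to \bZ/e\bZ$ sending $m$ to the class of the valuation of a generator of $R_m$ is a homomorphism with kernel $N$, inducing the isomorphism $\hat H = M/N \xrightarrow{\sim} \bZ/e\bZ$.

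For part (ii), I would set $S = R^H = \bigoplus_{m \in N} R_m$, so that $\Spec S$ models $U/H$ with its residual $G/H = \Spec k[N]$-action. First I would show $\psi$ is a $G/H$-torsor: for $m \in N = \ker w$ the lattice $R_m$ has a generator of valuation $0$, i.e.\ a unit $u_m \in R^\times$, so after shrinking to a $G$-stable neighborhood the products $R_m \otimes_{R_0} R_{m'} \to R_{m+m'}$ are isomorphisms and $S \cong R_0[N]$ étale-locally, which is exactly the trivial $G/H$-torsor. Then I would exhibit $\varphi$ as the cyclic cover $S[z]/(z^{e} - a)$: taking $z = f$ of degree $d$ with $w(d) = 1$ and $v(f) = 1$, the powers $1, z, \dots, z^{e-1}$ represent all classes in $\bZ/e\bZ$, $R = \bigoplus_{i=0}^{e-1} S\, z^{i}$, and $z^{e}$ lies in $S$ with $v(z^{e}) = e = v(t)$, so $z^{e} = a \in S$ defines the degree-$e = \vert H\vert$ cyclic cover. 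The computation $R/zR \cong S/tS$ of length $\vert N\vert$ recovers $Z = G\cdot x$ and confirms that $f = z$ cuts out the orbit.

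The main obstacle is establishing this clean cyclic structure of $R$ over $S$---that is, $R = \bigoplus_{i=0}^{e-1} S\,z^{i}$ with $z^{e} \in S$, with no further generators or relations. This is precisely where $G$-normality is indispensable: the Cartier condition on the orbit forces $\cI_Z$ to be principal with a homogeneous generator whose degree generates $\hat H$, while Serre's $(S_2)$ (Cohen--Macaulayness for a curve) together with unibranchness rules out the more degenerate local models that could otherwise occur in positive characteristic, pinning $R$ down to the single cyclic-cover model. The secondary technical point is globalization: arranging the units $u_m$ ($m \in N$) and the element $z$ to be defined simultaneously on one $G$-stable open neighborhood $U$, rather than merely henselian-locally, which I would handle by spreading out the finitely many chosen homogeneous generators and inverting the finitely many units involved.
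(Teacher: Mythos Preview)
Your argument has a fundamental gap at its very first step: the claim that ``the normalization $\nu : \tilde X \to X$ is $G$-equivariant'' is false in general, and in fact fails precisely in the cases of interest. If $X$ is $G$-normal and the $G$-action lifted to $\tilde X$, then $\tilde X \to X$ would be a finite birational morphism of $G$-varieties, hence an isomorphism by the very definition of $G$-normality. But when $G$ has a nontrivial infinitesimal part, $G$-normal curves are typically singular (cuspidal) at nonfree points where $\Stab_G(x) \subsetneq G$; see the paper's Remark~\ref{rem:localcurves}. So $\tilde X \neq X$, a contradiction. This failure of the normalization to carry the action is exactly the phenomenon that motivates the notion of $G$-normality in the first place (see the opening paragraph of the introduction), so you cannot invoke the smooth model $\tilde X$ to linearize the $H$-action and conclude $H \hookrightarrow \bG_m$. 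A related technical slip: the local ring $\cO_{X,x}^h$ does not inherit the full $M$-grading unless the orbit $G\cdot x$ is topologically a single point; only the semi-local ring $\cO_{X,G\cdot x}$ does (Lemma~\ref{lem:semiloc}).

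The paper avoids the normalization entirely. It works with the $\Lambda$-graded semi-local ring $A = \cO_{X,G\cdot x}$ over the DVR $B = \cO_{Y,y}$ and first analyzes the fibre algebra $\bar A = A/\fn A$: the subgroup $H$ is characterized intrinsically by $\bar A = \bar A^{H} \oplus \bar I$, where $\bar I$ is the largest graded ideal (Proposition~\ref{prop:fibre}). Cyclicity of $H$ and the cyclic-cover structure $A \simeq A^H[T]/(T^n - g)$ are then deduced from the $G$-normality criterion (Proposition~\ref{prop:Gnormal}\,(iii)) that the largest graded ideal $I \subset A$ is \emph{principal}: a homogeneous generator $f$ of $I$ has weight $\nu$ whose restriction to $H$ must generate $\Lambda(H)$, and the relation $\bar A = \sum_m \bar A^{H}\bar f^{\,m}$ lifts by Nakayama (Proposition~\ref{prop:local}). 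The globalization to an open $G$-stable $U$ is then Theorem~\ref{thm:ls}. Your valuation idea could in principle be made to work, but the valuation lives on $\tilde X$, which carries no $G$-action; the paper's substitute for ``valuation of a homogeneous element'' is precisely the principality of $I$, which comes directly from $G$-normality rather than from any auxiliary smooth model.
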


By a cyclic cover of degree $n$ in this 
local setting, we mean a morphism of affine
schemes $\varphi: \Spec(A) \to \Spec(B)$, 
where $A = B[T]/(T^n -g)$ and $g \in B$
is a nonzerodivisor. Then $\varphi$
is the quotient by the cyclic group scheme
$\bfmu_n$ acting on $\Spec(A)$ via the 
$\bZ/n\bZ$-grading 
$A = \bigoplus_{m=0}^{n-1} B \bar{T}^m$; 
also, $\varphi$ is a $\bfmu_n$-torsor outside 
of the zero scheme of $g$. 

As a consequence of Theorem 
\ref{thm:local}, every $G$-normal curve
is a tamely ramified $G$-torsor in
the sense of \cite[Def.~2.1]{BB}.
But equivariant normality imposes
additional conditions; for example,
a cyclic cover as above is 
$\bfmu_n$-normal if and only if
$B$ is an integrally closed domain
and the zero scheme of $g$ is reduced.

As a further consequence, the finite 
morphism $\pi$ is flat and 
a local complete intersection. So
its dualizing sheaf is isomorphic to
the relative canonical sheaf 
$\omega_{X/Y}$, and hence is invertible.

\begin{thm}\label{thm:hurwitz}
The sheaf $\omega_{X/Y}$ is equipped with
a $G$-linearization and a $G$-invariant
global section $s_{X/Y}$ such that
\[ \div(s_{X/Y})  = 
\sum \big( \vert \Stab_G(x) \vert - 1)\big) \, G \cdot x  \]
(sum over the $G$-orbits of $k$-rational
points of $X$).
\end{thm}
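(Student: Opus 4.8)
The plan is to exhibit $s_{X/Y}$ as a single, characteristic-free global section and to read off its divisor from the local structure of Theorem~\ref{thm:local}. Since $\pi$ is finite and flat, there is a canonical identification of $\cO_X$-modules $\pi_*\omega_{X/Y}\cong\cHom_{\cO_Y}(\pi_*\cO_X,\cO_Y)$, so that a global section of $\omega_{X/Y}$ is the same as an $\cO_Y$-linear map $\pi_*\cO_X\to\cO_Y$. Because $\pi$ is $G$-equivariant with $G$ acting trivially on $Y$, the algebra $\pi_*\cO_X$ is $G$-equivariant and its $\cO_Y$-dual inherits a $G$-linearization; this is the $G$-linearization of $\omega_{X/Y}$. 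I would take $s_{X/Y}$ to be the projection $\pi_*\cO_X\twoheadrightarrow(\pi_*\cO_X)^G=\cO_Y$ onto the $G$-invariants, that is, onto the degree-$0$ part for the grading by the character group $M$ of $G$. This map is $\cO_Y$-linear and a morphism of $G$-equivariant sheaves, hence a $G$-invariant global section of $\omega_{X/Y}$, with no choices and in any characteristic.

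Next I would show that $s_{X/Y}$ generates $\omega_{X/Y}$ over the torsor locus $X^\circ\subset X$ (the complement of the finitely many orbits with nontrivial stabilizer). Over $X^\circ$ the morphism $\pi$ is a $G$-torsor, so fppf-locally $\pi_*\cO_{X^\circ}\cong\cO\otimes_k k[M]$; the projection onto the degree-$0$ part then gives the pairing $(a,b)\mapsto s_{X/Y}(ab)$, whose Gram matrix in the basis given by $M$ is a permutation matrix and hence invertible. Thus the pairing is perfect and $s_{X/Y}$ is a generator on $X^\circ$. In particular $\div(s_{X/Y})$ is effective and supported on $\bigcup_x G\cdot x$, so it suffices to compute the order of vanishing of $s_{X/Y}$ along each $G\cdot x$.

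For this I would localize as in Theorem~\ref{thm:local}, factoring $\pi$ near $x$ as $U\xrightarrow{\varphi}U/H\xrightarrow{\psi}V$ with $H=\Stab_G(x)$, $n=|H|$, $\psi$ a $G/H$-torsor and $\varphi:\Spec A\to\Spec B$ a cyclic cover, $A=B[T]/(T^n-g)$. Projection onto invariants is computed in stages: the $G/H$-stage is a generator of $\omega_{(U/H)/V}$ by the torsor argument above, so it contributes nothing, and the order along $G\cdot x$ equals the order of the projection $A\to A^H=B$ in $\omega_{A/B}=\Hom_B(A,B)$. Here $\omega_{A/B}$ is free over $A$ on the generator $\gamma=(\bar{T}^{n-1})^{\vee}$ dual to $\bar{T}^{n-1}$, and a direct computation in the dual basis gives $\bar{T}^{n-1}\gamma=1^{\vee}$, which is exactly the projection onto $B$. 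Since $G\cdot x$ is cut out locally by $\bar{T}$ (one checks $A/(\bar{T})\cong B/(g)$ is the reduced point, as forced by $G$-normality), this yields $\ord_{G\cdot x}(s_{X/Y})=n-1=|\Stab_G(x)|-1$, and therefore
\[
\div(s_{X/Y})=\sum\big(|\Stab_G(x)|-1\big)\,G\cdot x .
\]

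The main obstacle is wild ramification. In the tame case $\cha(k)\nmid n$ one has $\tr_{A/B}=n\,\bar{T}^{n-1}\gamma=n\,s_{X/Y}$, so $s_{X/Y}$ is merely a unit multiple of the trace and the different computation is classical; but when $\cha(k)\mid n$ the trace vanishes identically, and the naive trace section carries no information. The point of using the projection onto invariants $1^{\vee}$ rather than $\tr_{A/B}=n\cdot 1^{\vee}$ is precisely that it survives in the wild case. The delicate steps are therefore the verification that $1^{\vee}$ is $H$-invariant and generically generating (the weight computation that $\gamma$ has weight $1$, so $\bar{T}^{n-1}\gamma$ has weight $0$), together with the compatibility of the global projection $\pi_*\cO_X\to\cO_Y$ with the two-stage local factorization, so that only the cyclic-cover stage $\varphi$ contributes to the divisor.
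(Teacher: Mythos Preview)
Your proposal is correct and follows essentially the same route as the paper: the section $s_{X/Y}$ is exactly the Reynolds projection $\cA\to\cB$ defined at the start of Section~\ref{sec:dualizing}, the torsor-locus triviality is Lemma~\ref{lem:cantors}, the two-stage factorization and the compatibility $s_{\pi}=s_{\varphi}\otimes\varphi^*(s_{\psi})$ are (\ref{eqn:adj}), and your dual-basis computation $\bar{T}^{n-1}\gamma=1^{\vee}$ is verbatim the computation $e_0=f^{n-1}e_{n-1}$ in the proof of Theorem~\ref{thm:can}. The only point where you are terser than the paper is the claim that $\bar{T}$ cuts out $G\cdot x$ with $B/(g)$ reduced; this is exactly what Proposition~\ref{prop:local} supplies (namely $gA=\fn A$, so $g$ is a uniformizer of the DVR $B=A^H$), and you should cite it rather than leave it as ``forced by $G$-normality''.
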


For a projective $G$-normal curve $X$,
this readily yields the relation between
arithmetic genera
\begin{equation}\label{eqn:hur}
2 p_a(X) - 2 = \vert G \vert \, 
\big( 2 p_a(Y) -2 \big) 
+ \sum \big(  \vert \Stab_G(x) \vert -1 \big), 
\end{equation}
where the sum runs again over the orbits.
(We have of course $p_a(Y) = g(Y)$ as $Y$ 
is smooth; also, $g(X) = g(Y)$ if $G$ is 
infinitesimal, see \cite[Rem.~5.2]{Br24}). 

The above  results are used in the 
preprint \cite{FM} by P.~Fong and M.~Maccan,
which studies a remarkable class 
of smooth projective surfaces in positive 
characteristics:
relatively minimal surfaces equipped with
an isotrivial elliptic fibration.  It turns out that
every such surface $S$ is obtained as 
a quotient $(E \times X)/G$,
where $E$ is an elliptic curve,  
$G \subset E$ a finite subgroup scheme, 
and $X$ a projective $G$-normal curve.
Thus,  $S$ is equipped with an action of 
the algebraic group
$E$ for which the quotient is the elliptic
fibration $f : S \to X/G = Y$.  When 
$G$ is diagonalizable,  the multiple fibers
and relative canonical bundle of $f$
are described in terms of the quotient
$\pi : X \to Y$ in \cite[\S 3]{FM},
based on Theorems \ref{thm:local}
and \ref{thm:hurwitz}.  It would be very
interesting to obtain a global description
of $G$-normal curves in this setting,
by gluing their local models in the spirit of
\cite{Pardini}.

Theorem \ref{thm:local} is well-known if
$G$ is constant; then $X$ is a smooth curve,
the morphism $\psi : U/H \to V$ is \'etale, 
and $\pi$ is a tamely ramified $G$-cover. 
In this setting, Theorem \ref{thm:hurwitz}
and (\ref{eqn:hur}) follow from
the classical Hurwitz formula. 
For an arbitrary finite group scheme $G$, 
the morphism $\psi$ may well be purely 
inseparable. Still, its dualizing sheaf is 
equipped with a natural section,  
and this reduces the proof of Theorem 
\ref{thm:hurwitz} to a local computation.
In view of this theorem, the branch locus 
of $\pi$ (i.e., the locus where it fails to 
be a $G$-torsor) can be read off the 
relative canonical sheaf $\omega_{X/Y}$.

In the general setting of a $G$-normal variety
$X$ over an arbitrary field, where $G$ 
is diagonalizable, we obtain versions of
Theorems \ref{thm:local} and \ref{thm:hurwitz}
over an open $G$-stable subset of $X$ with
complement of codimension at least $2$;
see Theorems \ref{thm:ls} and \ref{thm:can}.
In this setting, the relative dualizing 
sheaf $\omega_{\pi}$ is torsion-free and
satisfies $(S_2)$, so that it suffices
to determine it in codimension $1$;
see \cite{Kollar} for further aspects of
duality for torsion-free $(S_2)$ sheaves.

The layout of this paper is as follows.
Sections \ref{sec:fgsa} and \ref{sec:diag}
collect preliminary notions and results 
on actions of finite group schemes over 
a field, especially
diagonalizable ones. For these, we discuss
torsors and uniform cyclic covers in the 
sense of \cite{AV} (Remarks \ref{rem:torsor} 
and \ref{rem:cyclic}), 
which form the main ingredients of our local 
structure results. Our presentation has some
overlap with \cite[Sec.~3]{Haution}; we have
provided details in order to be self-contained.
The main result of Section \ref{sec:env}
is a $G$-normality criterion for a finite
group scheme $G$
(Proposition \ref{prop:Gnormal}), which takes 
a simpler form when $G$ is diagonalizable.
Under this assumption, we obtain our local
structure theorem by steps: in Section 
\ref{sec:lcs}, we first describe the fibres of 
the quotient morphism $\pi$ at points of
codimension $1$ for a $G$-normal variety $X$,
and then the corresponding semi-local rings 
of $X$; these may be viewed as equivariant 
analogs of discrete valuation rings 
(see Propositions \ref{prop:fibre} and
\ref{prop:local}). 
From this, we deduce a more global 
description of $X$ in codimension $1$ 
(Theorem \ref{thm:ls}, the main result of
Section \ref{sec:lcsc}). In turn, this
yields in Section \ref{sec:dualizing}
a formula \`a la Hurwitz for the dualizing 
sheaf of the quotient morphism 
$\pi$. Section \ref{sec:curves} 
contains applications to curves; 
in particular, we determine the arithmetic 
genus of projective $G$-normal curves,
and its equivariant analog 
(Proposition \ref{prop:genus}).
In Section \ref{sec:lrgs},
we extend our description of the dualizing
sheaf to the setting of linearly reductive group
schemes (Theorem \ref{thm:linred} and
Proposition \ref{prop:lr}).

\medskip

\noindent
{\bf Acknowledgements.} 
I am grateful to Pascal Fong,  Qing Liu, 
Matilde Maccan,  David Rydh,  and Dajano Tossici 
for interesting discussions and helpful comments 
on earlier versions of this paper.

\section{Finite group schemes and their actions}
\label{sec:fgsa}

Throughout this paper, we work over a field $k$
of characteristic $p > 0$. 
Schemes are assumed to be separated and 
of finite type over $k$ unless explicitly
mentioned. A \emph{variety} 
is a geometrically integral scheme. 
We say that an open subset $U$ of a variety
$X$ is \emph{big} if its complement 
$X \setminus U$ has codimension at least $2$.

We denote by $G$ a finite group scheme and 
by $\vert G \vert$ its order, i.e., 
the dimension of the algebra 
$\cO(G)$ viewed as a $k$-vector space. 
Recall that $G$ lies in a unique exact sequence
\begin{equation}\label{eqn:exact}
1 \longrightarrow G^0 \longrightarrow G
\longrightarrow \pi_0(G) \longrightarrow 1,
\end{equation}
where $G^0$ is infinitesimal and $\pi_0(G)$
is finite and \'etale. If $k$ is perfect, then
this sequence has a unique splitting 
(see e.g.~\cite[II.5.2.4]{DG}.

We say that $G$ is \emph{linearly reductive}
if every $G$-module is semi-simple. By Nagata's
theorem, this is equivalent to $G^0$ being of
multiplicative type and $\vert \pi_0(G) \vert$
being prime to $p$ (see \cite[IV.3.3.6]{DG}).

A $G$-\emph{scheme} is a scheme $X$ 
equipped with a $G$-action
\[ \alpha : G \times X \longrightarrow X, 
\quad (g,x) \longmapsto g \cdot x. \] 
This action is said to be \emph{faithful} 
if every nontrivial subgroup scheme of $G$ 
acts nontrivially.

For any $G$-scheme $X$ and any closed
subscheme $X'$, the morphism
$G \times X' \to X$, $(g,x) \mapsto g \cdot x$
is finite. We denote its schematic image by
$G \cdot X'$; this is the smallest closed
$G$-stable subscheme of $X$ containing $X'$.  
Also, we denote by $C_G(X')$ the 
\emph{centralizer} of $X'$, i.e., 
the largest subgroup scheme of $G$ that acts 
trivially on $X'$. 
In particular, for any closed point $x \in X$,  
we obtain the \emph{orbit} $G \cdot x$
and the centralizer $C_G(x)$.
These will be discussed at the end of the next
paragraph.

Given a field extension $K/k$ and a scheme
$X$, we denote by $X_K$ the $K$-scheme
obtained from $X$ by the base change 
$\Spec(K) \to \Spec(k)$. Then
$G_K$ is a finite $K$-group scheme
and the formation of (\ref{eqn:exact})
commutes with such base change.
Moreover, every $G$-action $\alpha$ on $X$ 
yields a $G_K$-action $\alpha_K$ on $X_K$,
which may be identified with a $G$-action on
the $k$-scheme $X_K$ via the isomorphism
\[ G_K \times_{\Spec(K)} X_K
\stackrel{\sim}{\longrightarrow}
G \times X_K.
\]
The canonical projection $\pr : X_K \to X$ 
is equivariant relative 
to the homomorphism $G_K \to G$. 
If $x \in X$ is a closed point with residue 
field $K = \kappa(x)$, then viewing $x$ 
as a $K$-point of $X_K$, the orbit 
$G_K \cdot x \subset X_K$ has schematic image 
$G \cdot x$ under the above projection. Moreover, 
we have an isomorphism of $G_K$-schemes 
$G_K \cdot x \simeq G_K/C_{G_K}(x)$
and the equality $C_{G_K}(x) = \Stab_G(x)$, 
the fibre at $x$ of the stabilizer 
$\Stab_G \subset G \times X$. 

Next, we discuss quotients by $G$; for this, 
we only consider $G$-schemes $X$ that admit 
a covering by open affine $G$-stable subsets. 
This assumption is satisfied if $G$ is 
infinitesimal (then every open subset is 
$G$-stable), or if $X$ is quasi-projective
(e.g., a curve). Then $X$ admits 
a categorical quotient 
$\pi : X \to Y = X/G$,
where $Y$ is a scheme, $\pi$ 
is finite and surjective, and the morphism
\begin{equation}\label{eqn:graph} 
\gamma = (\alpha,\pr_2) : G \times X
\longrightarrow X \times_Y X, \quad
(g,x) \longmapsto (g\cdot x, x)
\end{equation}
is surjective as well. Therefore, 
$\cA = \pi_*(\cO_X)$ is a coherent 
algebra over $\cB = \cO_Y$, equipped with 
a $G$-linearization. 
Moreover, the natural map
$\cB \to \cA^G$ is an isomorphism
(see \cite[III.2.6.1]{DG} for these results).
If $X$ is a variety, then so is $Y$.

Also, recall that $X$ has a largest open
$G$-stable subset $X_{\fr}$ on which $G$
acts freely. Moreover, $\pi$ restricts to 
a $G$-torsor $X_{\fr} \to Y_{\fr}$, where 
$Y_{\fr}$ is open in $Y$ (see loc.~cit.). 
We say that a $G$-variety $X$ is 
\emph{generically free} if $X_{\fr}$ is
nonempty.

The quotient morphism $\pi$ is the composition
\begin{equation}\label{eqn:quotients}
X \stackrel{\varphi}{\longrightarrow}
Z = X/G^0 \stackrel{\psi}{\longrightarrow} Y,
\end{equation}
where $\varphi$ (resp.~$\psi$) denotes the
quotient by $G^0$ (resp.~$\pi_0(G)$).
Moreover, $\varphi$ is purely inseparable,
and hence a universal homeomorphism.
The formation of $\pi$, $\varphi$ and $\psi$
commutes with flat base change $Y' \to Y$;
in particular, with field extensions.

\begin{lemma}\label{lem:fact}
Assume that $X$ is a $G$-variety. Then
the $G$-action on $X$ is faithful 
(resp.~generically free, free) 
if and only if the $G^0$-action on $X$ 
is faithful (resp.~generically free, free) 
and the $\pi_0(G)$-action on $X/G^0$ 
is faithful (resp.~free).
\end{lemma}

\begin{proof}
By fpqc descent, we may assume $k$ 
algebraically closed. Then 
$G = G^0 \rtimes \pi_0(G)$ and hence
$G(k) \stackrel{\sim}{\longrightarrow}
\pi_0(G)(k)$; moreover, $\pi_0(G)$ 
is constant.

Denote by $H$ the kernel of the $G$-action
on $X$. Then also  
$H \simeq H^0 \rtimes \pi_0(H)$,
and $\pi_0(H)$ is constant. Moreover,
$H^0$ (resp.~$\pi_0(H)$) is the kernel 
of the action of $G^0$ (resp.~$\pi_0(G)$)
on $X$. Since $\varphi$ is bijective,
it follows that $\pi_0(H)$ is the kernel
of the $\pi_0(G)$-action on $Z$. This 
readily implies the assertion on faithful
actions.

The assertion on free actions is obtained 
similarly by considering the stabilizers 
of $k$-rational points of $X$.
This assertion implies that the $G$-action
on $X$ is generically free if and only if
so are the $G^0$-action on $X$ and the 
$\pi_0(G)$-action on $Z$. But for an action 
of a finite group (that is, a finite constant 
group scheme) on a variety, 
being generically free is equivalent to being 
faithful; this completes the proof.
\end{proof}

\begin{lemma}\label{lem:sub}
Let $X$ be a $G$-variety with quotient 
$\pi : X \to Y$, and let $X' \subset X$ be 
a closed $G$-stable subscheme. 
\begin{enumerate}
\item[{\rm (i)}] The quotient morphism
$\pi' : X' \to Y' = X'/G$ exists and lies in 
a commutative square
\[ 
\xymatrix{
X' \ar[r]^i \ar[d]_{\pi'} & X \ar[d]^{\pi} \\
Y' \ar[r]^j & Y, \\
} \]
where $i$ denotes the inclusion,  and
$j$ is finite and purely inseparable. 
If $G$ is linearly reductive,  then $j$ is
a closed immersion.
\item[{\rm (ii)}] For any open subset 
$U \subset X$ such that $U \cap X'$
is dense in $X'$,  there exists an open
$G$-stable subset $V \subset U$ such that
$V \cap X'$ is dense in $X'$.
\end{enumerate}
\end{lemma}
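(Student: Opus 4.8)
The plan is to prove Lemma~\ref{lem:sub} in two parts, treating each claim about the quotient $\pi' : X' \to Y'$ separately.

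\textbf{Part (i).} First I would establish existence of the quotient $\pi' : X' \to Y' = X'/G$. Since $X$ is covered by open affine $G$-stable subsets, and $X'$ is closed and $G$-stable, the traces of these subsets on $X'$ give a covering of $X'$ by open affine $G$-stable subsets; hence the categorical quotient $\pi'$ exists by the general existence result recalled before Lemma~\ref{lem:fact}. To build the commutative square, I would work locally on an affine $G$-stable open $\Spec(\cA) \subset X$, where $X'$ corresponds to a $G$-stable ideal $\cI \subset \cA$, so that $X' \cap \Spec(\cA) = \Spec(\cA/\cI)$. Taking $G$-invariants gives the natural map $\cA^G \to (\cA/\cI)^G$, which defines $j$ on the affine pieces; since the inclusion $i$ is $G$-equivariant, functoriality of the categorical quotient yields commutativity of the square, and these local maps glue. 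The surjection $\cA^G \to (\cA/\cI)^G$ factors as $\cA^G \twoheadrightarrow \cA^G/(\cI \cap \cA^G) \hookrightarrow (\cA/\cI)^G$; the first map is a surjection onto the image of $\cA^G$ in $(\cA/\cI)^G$, and I would show the second map becomes an isomorphism after a purely inseparable extension, which is what makes $j$ finite and purely inseparable. The point is that an invariant of $\cA/\cI$ need not lift to an invariant of $\cA$, but some $p$-power of it does: if $\bar a \in (\cA/\cI)^G$ and $a \in \cA$ is any lift, then $g \cdot a - a \in \cI$ for all points $g$, and after raising to a suitable $p$-power (using that $G^0$ is infinitesimal, killed by Frobenius, and that $\pi_0(G)$-invariance can be arranged étale-locally) one obtains an element of $\cA^G$ reducing to $\bar a^{p^n}$. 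This shows $j$ is bijective on points with trivial residue field extensions, i.e.\ finite and purely inseparable. When $G$ is linearly reductive, every $G$-module is semisimple, so the surjection $\cA \twoheadrightarrow \cA/\cI$ of $G$-modules splits $G$-equivariantly, whence the induced map $\cA^G \to (\cA/\cI)^G$ on invariants is already surjective; this makes $j$ a closed immersion.

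\textbf{Part (ii).} Given an open $U \subset X$ with $U \cap X'$ dense in $X'$, I want to shrink $U$ to a $G$-stable open $V$ still meeting $X'$ densely. The natural candidate is the $G$-stable interior $V = \{x : G \cdot x \subset U\}$, equivalently the complement of $G \cdot (X \setminus U)$, which is $G$-stable and open since $G \cdot (X \setminus U)$ is the image of the finite (hence closed) morphism $G \times (X \setminus U) \to X$ and is therefore closed. The content is to verify $V \cap X'$ is dense in $X'$. Since $X'$ is irreducible (being a closed subscheme of the variety $X$ that I may reduce to considering through its generic point) it suffices to show $V$ contains the generic point $\eta'$ of $X'$. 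Because $U \cap X'$ is dense in $X'$, we have $\eta' \in U$; I must rule out $\eta' \in G \cdot (X \setminus U)$. The generic point of each irreducible component of $G \cdot (X \setminus U)$ lies over the generic point of a component of $X \setminus U$, which has smaller dimension than $X'$ is to the extent that $\dim G \cdot (X\setminus U) = \dim(X \setminus U)$ (as $G$ is finite); since $X \setminus U$ does not contain $\eta'$, and the $G$-orbit map does not raise dimension, $\eta'$ cannot be swept into $G \cdot (X \setminus U)$.

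\textbf{Main obstacle.} The delicate step is the factorization in (i): showing $j$ is finite and purely inseparable in the non-linearly-reductive case. The issue is that $G$-invariants do not commute with quotients by $G$-stable ideals in general, so one must control the discrepancy between $(\cA/\cI)^G$ and the image of $\cA^G$. The key is that this discrepancy is purely inseparable---invariants of the quotient are killed into the image of $\cA^G$ by some power of Frobenius---which I expect to extract from the structure of $G$ via the sequence \eqref{eqn:exact}, handling $G^0$ by Frobenius-nilpotence and $\pi_0(G)$ by étale descent.
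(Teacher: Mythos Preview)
For part (i), your construction of $\pi'$ and $j$ and your treatment of the linearly reductive case match the paper. Where you diverge is in showing that $j$ is finite and purely inseparable: the paper gives a two-line argument --- since $j\circ\pi'=\pi\circ i$ is finite and $\pi'$ is surjective, $j$ is finite; and for every algebraically closed extension $K$ the map $j(K):Y'(K)\to Y(K)$ is the inclusion of orbit sets $X'(K)/G(K)\hookrightarrow X(K)/G(K)$, so $j$ is radicial. Your Frobenius-lifting route is sound for $G^0$ (any element of $\cA$ becomes $G^0$-invariant after a sufficiently high $p$-power, since $\cO(G^0)$ is local artinian), but your treatment of $\pi_0(G)$ is only a gesture: ``arranged \'etale-locally'' does not explain why, for a constant group whose order may be divisible by $p$, every element of $(\cA/\cI)^G$ has a $p$-power in the image of $\cA^G$. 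That statement is true, but its clean justification is precisely the orbit-set argument the paper uses.

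Part (ii) has a genuine gap. First, $X'$ need not be irreducible: a closed $G$-stable subscheme of a variety may have several components, and the paper's proof explicitly works with all the generic points $x_1,\ldots,x_n$ of $X'$. Second, your dimension argument for $\eta'\notin G\cdot(X\setminus U)$ does not hold up: the finiteness of $G$ keeps $\dim G\cdot(X\setminus U)=\dim(X\setminus U)$, but this does not prevent a particular point from being swept into $G\cdot(X\setminus U)$. The missing ingredient is the $G$-stability of $X'$: if a generic point $\eta$ of $X'$ lay in $G\cdot(X\setminus U)$ then, after base change making $\pi_0(G)$ constant, some translate $g^{-1}\cdot\eta$ --- which is again a generic point of $X'$ --- would lie in $X\setminus U$, contradicting density of $U\cap X'$. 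The paper avoids this by a different reduction: via the factorization through $X/G^0$ and Galois descent it reduces to constant $G$, where one simply takes $V=\bigcap_{g\in G}g\cdot U$; this contains each $x_i$ because $G$ permutes the generic points of $X'$ and they all lie in $U$.
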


\begin{proof}
(i) Since $X' \subset X$ is covered by 
$G$-stable open affine subsets,  it admits
a categorical quotient $\pi' : X' \to Y'$. 
Moreover, the universal property of $\pi$
yields a unique morphism $j : Y' \to Y$
such that $j \circ \pi' = i \circ \pi$;
as a consequence,  $j \circ \pi'$ is finite.
Since $\pi'$ is dominant,  it follows that
$j$ is finite. Also,  for any algebraically 
closed field extension $K/k$, 
the map $j(K) : Y'(K) \to Y(K)$ is injective,
since $Y'(K)$ is the orbit space
$X'(K)/G(K)$ and likewise 
$Y(K) = X(K)/G(K)$. 
So $j$ is purely inseparable.

To show the final assertion,  we may assume
$X$ affine. Then the restriction map
$\cO(X) \to \cO(X')$ is surjective, 
and hence so is 
$\cO(X)^G \to \cO(X')^G$ 
by linear reductivity. Equivalently, 
$j^{\#} : \cO(Y) \to \cO(Y')$ is surjective
as desired.

(ii) If $G$ is constant,  then it permutes the
generic points $x_1, \ldots,x_n$ of $X'$.
By assumption,
$U \cap X' \supset \{ x_1, \ldots, x_n \}$
and hence $V = \bigcap_{g \in G} g \cdot U$
is an open $G$-stable subset of $X$ 
such that 
$V \cap X' \supset \{ x_1, \ldots, x_n \}$.
This yields our assertion in this case,
and hence in the case where $G$ is \'etale
by Galois descent.  The general case follows
from this by using the factorization 
(\ref{eqn:quotients}) of $\pi$.
\end{proof}

With the assumption of Lemma \ref{lem:sub},  
we denote by $x_1, \ldots, x_n$ the generic 
points of $X'$ and let 
\begin{equation}\label{eqn:semiloc}
\cO_{X,X'} = 
\cO_{X, x_1} \cap \cdots \cap \cO_{X,x_n}.  
\end{equation}
This is a semi-local ring, and its maximal 
ideals correspond bijectively to 
$x_1,\ldots,x_n$.

\begin{lemma}\label{lem:semiloc}
With the above notation, the ring $\cO_{X,X'}$ 
is equipped with a (functorial) $G$-action, 
and is a union of finite-dimensional $G$-submodules.
\end{lemma}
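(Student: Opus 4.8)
The plan is to establish the two assertions—the $G$-action on $\cO_{X,X'}$ and the local finiteness—and the key tension is that $\cO_{X,X'}$ is a \emph{localization}, so I must check that the $G$-action on the structure sheaf of $X$ descends to these particular fractions. First I would reduce to an affine situation: by Lemma \ref{lem:sub}(ii), the points $x_1,\ldots,x_n$ admit an open $G$-stable neighborhood, and by shrinking I may take $X = \Spec(R)$ affine with a coaction $\Delta : R \to R \otimes_k \cO(G)$. The semi-local ring $\cO_{X,X'}$ is then the localization $S^{-1}R$, where $S = R \setminus (\fp_1 \cup \cdots \cup \fp_n)$ and $\fp_i$ is the prime of $R$ corresponding to $x_i$.

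The heart of the argument is to show that this multiplicative set $S$ is, in a suitable sense, $G$-stable, so that the coaction extends to $S^{-1}R$. Since $X'$ is $G$-stable, the set $\{x_1,\ldots,x_n\}$ of its generic points is permuted by $G$; more precisely, the ideal $I(X')$ is a $G$-subcomodule of $R$, and its minimal primes $\fp_1,\ldots,\fp_n$ are permuted by the (geometric) action. I would make this precise by passing to $\kb$ if needed, or better, argue directly with comodules: the complement $X \setminus X'$ and the individual orbit-closures of the $x_i$ give that the \emph{intersection} $\fp_1 \cap \cdots \cap \fp_n$ (the nilradical of $I(X')$) is $G$-stable, which is what controls the union $\fp_1 \cup \cdots \cup \fp_n$. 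Concretely, the cleanest route is to invoke the factorization (\ref{eqn:quotients}) and Lemma \ref{lem:fact}-style reductions, first handling $G$ infinitesimal (where every open is $G$-stable, so $S^{-1}R$ inherits the coaction immediately because $\Delta(S) \subset S \otimes \cO(G)$ is automatic up to units), then handling $G$ \'etale by Galois descent where $G(\kb)$ genuinely permutes the $\fp_i$.

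Granting that $S$ is $G$-stable in the comodule sense, I would verify that $\Delta$ extends uniquely to a coaction $S^{-1}R \to (S^{-1}R) \otimes_k \cO(G)$ by the universal property of localization, using that $\cO(G)$ is finite-dimensional (flat) over $k$ so that localization commutes with $-\otimes_k \cO(G)$; functoriality in $X$ and $G$ is then formal. For the final assertion—that $\cO_{X,X'}$ is a union of finite-dimensional $G$-submodules, i.e.\ a $G$-comodule—the key input is that any comodule over the finite-dimensional coalgebra $\cO(G)$ is locally finite: every element lies in a finite-dimensional subcomodule. This is the standard fundamental theorem of comodules, and since we have just exhibited $\cO_{X,X'}$ as an $\cO(G)$-comodule, local finiteness is immediate.

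The main obstacle I anticipate is the descent step establishing that the denominators $S$ form a $G$-stable set at the comodule level, since $x_1,\ldots,x_n$ are only permuted after base change to $\kb$ and one must descend the resulting coaction back to $k$ without assuming the orbit of each $x_i$ stays within $\{x_1,\ldots,x_n\}$ individually—only the full set is preserved. Handling this uniformly for $G^0$ and $\pi_0(G)$ via (\ref{eqn:quotients}), rather than case-by-case, is where the care is needed; everything downstream (extending $\Delta$, functoriality, local finiteness) is then routine coalgebra.
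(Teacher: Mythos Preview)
Your argument is correct, but the paper takes a much shorter route that bypasses the localization analysis entirely. Instead of showing that the multiplicative set $S$ is $G$-stable and extending the coaction to $S^{-1}R$, the paper simply writes
\[
\cO_{X,X'} \;=\; \varinjlim_{U} \cO(U),
\]
where $U$ runs over all open neighborhoods of $\{x_1,\ldots,x_n\}$, and then invokes Lemma~\ref{lem:sub}(ii) to see that the \emph{$G$-stable} open neighborhoods $V$ are cofinal in this system. Hence $\cO_{X,X'} = \varinjlim_V \cO(V)$ over $G$-stable $V$, and both the $G$-action and the local finiteness are inherited termwise from each $\cO(V)$. Your approach makes the comodule structure on the fractions explicit, at the cost of the $G^0$/$\pi_0(G)$ case split and the descent step you yourself flag as delicate; the cofinality argument avoids all of this because Lemma~\ref{lem:sub}(ii) has already absorbed that dichotomy in its own proof.
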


\begin{proof}
We have
$\cO_{X,X'} = \varinjlim \cO(U)$,
where $U$ runs over the open subsets of
$X$ containing $x_1,\ldots, x_n$.
By Lemma \ref{lem:sub},  it follows that
$\cO_{X,X'} = \varinjlim \cO(V)$,
where $V$ runs over the above open subsets 
which are $G$-stable.  This readily yields the
assertions.
\end{proof}

In what follows, we will consider $G$-varieties
with a prescribed quotient $Y$, that is, 
pairs $(X,\pi)$, where $X$ is a $G$-variety and 
$\pi : X \to Y$ is the quotient morphism. 
Assuming in addition that the $G$-action is 
faithful, we say that $X$ is a 
\emph{$G$-variety over $Y$}.
Morphisms of such pairs are $G$-equivariant
morphisms of varieties over $Y$; in particular, 
the automorphism group of $(X,\pi)$ is 
the group $\Aut^G_Y(X)$ of equivariant 
relative automorphisms. We now obtain 
a description of this group, under 
assumptions that will be fulfilled by 
diagonalizable groups.

\begin{lemma}\label{lem:aut}
Let $Y$ be a variety, and $X$ a $G$-variety
over $Y$. 

\begin{enumerate}
\item[{\rm (i)}] The quotient morphism
$\varphi : X \to X/G^0 = Z$ induces
an injective homomorphism 
$\varphi_* : \Aut_Y^{G^0}(X) \to \Aut_Y(Z)$.
\item[{\rm (ii)}] The natural homomorphism
$\pi_0(G)(k) \to \Aut_Y(Z)$ is bijective.
\item[{\rm (iii)}] If $G$ is commutative and
the exact sequence (\ref{eqn:exact}) splits, 
then
\[ G(k) \stackrel{\sim}{\longrightarrow}
\Aut_Y^{G^0}(X) = \Aut_Y^G(X). \]
\end{enumerate}

\end{lemma}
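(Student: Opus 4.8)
The plan is to prove the three parts in order, using the factorization $\pi = \psi \circ \varphi$ from (\ref{eqn:quotients}) to reduce the structure of $\Aut_Y^G(X)$ to the infinitesimal and \'etale pieces separately.

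For part (i), I would argue that $\varphi_*$ is a well-defined homomorphism because any $G^0$-equivariant automorphism of $X$ over $Y$ commutes with the formation of the categorical quotient $\varphi : X \to Z = X/G^0$, and hence descends to an automorphism of $Z$ over $Y$. For injectivity, I would use that $\varphi$ is the quotient by $G^0$: if $f \in \Aut_Y^{G^0}(X)$ induces the identity on $Z$, then $f$ permutes each fibre of $\varphi$ while fixing its image, so $f$ is a $Z$-automorphism of $X$; the key point is that $X \to Z$ is purely inseparable (it is a $G^0$-torsor over a dense open, and universally a homeomorphism), which forces such an automorphism to be determined by its action on the generic fibre, and $G^0$-equivariance then pins it down as the identity. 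More carefully, one shows that a $G^0$-equivariant self-map of a $G^0$-torsor that fixes the base is translation by a global section of the (trivial, since it equals the identity on $Z$) inner form, hence trivial.

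For part (ii), I would identify $Z = X/G^0$ as a variety on which $\pi_0(G)$ acts faithfully (by Lemma \ref{lem:fact}, since the $G$-action on $X$ is faithful), with quotient $Z/\pi_0(G) = Y$. The natural homomorphism $\pi_0(G)(k) \to \Aut_Y(Z)$ sends a rational point to the corresponding translation automorphism. Injectivity follows from faithfulness of the $\pi_0(G)$-action. For surjectivity — which I expect to be the main obstacle — I would use that $\psi : Z \to Y$ is generically a $\pi_0(G)$-torsor (indeed $\pi_0(G)$ is \'etale, hence linearly reductive when $|\pi_0(G)|$ is prime to $p$, but in general one argues over the free locus): any relative automorphism of $Z$ over $Y$ preserves fibres of $\psi$, so over the torsor locus it acts as translation by a uniquely determined rational point of $\pi_0(G)$, and this section extends over all of $Y$ because $Z$ is normal (a variety) and the torsor locus is dense. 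The delicate step is ensuring the generically-defined translation extends to a global automorphism; I would handle this by the separatedness of $\Aut_Y(Z)$ together with normality of $Z$, or by descending to the constant group case after base change to $\bar k$ via fpqc descent as in the proof of Lemma \ref{lem:fact}.

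For part (iii), I would combine (i) and (ii). Assuming $G$ commutative with (\ref{eqn:exact}) split, write $G(k) = G^0(k) \times \pi_0(G)(k) = \pi_0(G)(k)$ since $G^0$ is infinitesimal (so $G^0(k)$ is trivial), whence the natural map $G(k) \to \Aut_Y^G(X)$ factors through $\pi_0(G)(k)$. The composite $\Aut_Y^G(X) \hookrightarrow \Aut_Y^{G^0}(X) \xrightarrow{\varphi_*} \Aut_Y(Z)$ lands in the image of $\pi_0(G)(k)$ by part (ii), and commutativity of $G$ guarantees that the $G^0$-equivariant relative automorphisms coming from $G(k)$ are automatically $G$-equivariant, giving the equality $\Aut_Y^{G^0}(X) = \Aut_Y^G(X)$. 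Chasing these identifications and using the injectivity from (i) shows $G(k) \to \Aut_Y^G(X)$ is bijective.
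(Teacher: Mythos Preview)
Your approach is essentially the paper's, but two points deserve tightening.

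For (i), your ``more carefully'' detour through $G^0$-torsors is both unnecessary and incomplete: $X \to Z$ is a $G^0$-torsor only over the free locus, so the inner-form argument does not directly apply globally. The paper's argument is simpler and does not use $G^0$-equivariance at all: the kernel of $\varphi_*$ is $\Aut_Z(X)$, which embeds in $\Aut_{k(Z)}(k(X))$ because $X$ is a variety; since $k(X)/k(Z)$ is purely inseparable, this automorphism group is trivial. Your first sentence already contains this idea, so just drop the torsor paragraph.

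For (ii), you write that ``$Z$ is normal (a variety)'' --- but $Z = X/G^0$ need not be normal under the hypotheses of the lemma (no $G$-normality is assumed). Fortunately you do not need normality, nor do you need to \emph{extend} anything: the given $\sigma \in \Aut_Y(Z)$ is already globally defined, and so is the translation by $g \in \pi_0(G)(k)$ that it matches on $Z_{\fr}$; since $Z$ is integral and separated and $Z_{\fr}$ is dense, they agree everywhere. This is exactly how the paper argues: it embeds $\Aut_Y(Z)$ into $\Aut_{Y_{\fr}}(Z_{\fr})$ by restriction, then computes the latter as $\Hom(Z_{\fr},\pi_0(G)) \simeq \Hom(\pi_0(Z_{\fr}),\pi_0(G)) \simeq \pi_0(G)(k)$ using that $Z_{\fr}$ is a variety.

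Part (iii) is fine and matches the paper: once $G(k) \hookrightarrow \Aut_Y^G(X) \subset \Aut_Y^{G^0}(X) \hookrightarrow \Aut_Y(Z) \cong \pi_0(G)(k) = G(k)$, all inclusions are equalities.
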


\begin{proof}
The existence of the homomorphism 
$\varphi_*$ follows from the universal 
property of the categorical quotient.
Clearly, the kernel of $\varphi_*$ is
isomorphic to $\Aut_Z(X)$, and hence is
a subgroup of $\Aut_{k(Z)}(k(X))$. Since 
the extension of function fields
$k(X)/k(Z)$ is purely inseparable, it follows 
that $\varphi_*$ is injective, proving (i).

By Lemma \ref{lem:fact}, the action of
$\pi_0(G)$ on $Z$ is generically free. 
Therefore, we may identify $\Aut_Y(Z)$ with
a subgroup of $\Aut_{Y_{\fr}}(Z_{\fr})$,
where $Z_{\fr} \to Y_{\fr}$ is a 
$\pi_0(G)$-torsor. Moreover,
we have natural isomorphisms
\[ 
\Aut_{Y_{\fr}}(Z_{\fr}) \simeq 
\Hom(Z_{\fr}, \pi_0(G)) \simeq
\Hom(\pi_0(Z_{\fr}), \pi_0(G))
\simeq \pi_0(G)(k),
\]
where the second one follows from 
\cite[I.4.6.5]{DG}, and the third one 
holds since $Z_{\fr}$ is a variety
and hence $\pi_0(Z_{\fr}) = \Spec(k)$.
This implies (ii).

Under the assumptions of (iii),  the group
$G(k)$ acts on $X$ by $G$-equivariant
automorphisms over $Y$, and 
$G(k) \stackrel{\sim}{\longrightarrow}
\pi_0(G)(k)$. This yields the desired
assertion by combining (i) and (ii).
\end{proof}

\section{Diagonalizable group schemes}
\label{sec:diag}

We assume that $G$ is diagonalizable, 
and denote by 
$\Lambda = \Lambda(G) = \Hom_{\gp}(G,\bG_m)$ 
its character group. Recall that $\Lambda$ 
is a finite abelian group, and we have 
a $\Lambda$-grading
\begin{equation}\label{eqn:algebra}
\cO(G) = 
\bigoplus_{\lambda \in \Lambda} k \lambda.
\end{equation}
We say that $G$ is \emph{cyclic} if
$\Lambda = \bZ/n\bZ$ for some positive
integer $n$; equivalently, $G$ is the
group scheme $\bfmu_n$ of $n$th roots
of unity.

For an arbitrary diagonalizable group $G$, 
the exact sequence
(\ref{eqn:exact}) has a unique splitting:
$G = G^0 \times \pi_0(G)$, where $G^0$ 
(resp.~$\pi_0(G)$) is a product of 
finitely many cyclic groups of order 
a power of $p$ (resp.~prime to $p$).  
Moreover, the subgroup schemes $H$ of $G$ 
correspond bijectively to the subgroups 
of $\Lambda$ via
$H \mapsto \Lambda(G/H) = \Lambda^H$.
As a consequence,  for any field extension
$K/k$, the map $H \mapsto H_K$ yields
a bijection between the subgroup schemes
of $G$ and those of $G_K$.

Given a $G$-scheme $X$ with quotient
$\pi : X \to Y$, the $G$-linearized sheaf 
$\cA$ decomposes into eigenspaces:
\begin{equation}\label{eqn:grading}
 \cA = \bigoplus_{\lambda \in \Lambda}
\cA_{\lambda}, 
\end{equation}
where $\cA_0 = \cB$ and each $\cA_{\lambda}$ 
is a coherent $\cB$-module. This yields again
a $\Lambda$-grading, i.e., the multiplication 
of $\cA$ induces morphisms of $\cB$-modules 
\begin{equation}\label{eqn:mult}
\mult_{\lambda,\mu} : 
\cA_{\lambda} \otimes_{\cB} \cA_{\mu}
\longrightarrow \cA_{\lambda + \mu}
\end{equation}
for all $\lambda,\mu \in \Lambda$, and 
\begin{equation}\label{eqn:sec}
\sigma_{\lambda,n} : \cA_{\lambda}^{\otimes n}
\longrightarrow \cA_{n\lambda}
\end{equation}
for all such $\lambda$ and all integers 
$n \geq 1$. The formation of 
$\cA_{\lambda}$, $\mult_{\lambda,\mu}$ 
and $\sigma_{\lambda,n}$ 
commutes with arbitrary base change $Y' \to Y$.

\begin{lemma}\label{lem:torsor}
With the above notation, $\pi$ is a $G$-torsor
if and only if $\mult_{\lambda,\mu}$ is an 
isomorphism for all $\lambda,\mu \in \Lambda$.
Under these assumptions, each
$\sigma_{\lambda,n}$ is an isomorphism as well.
\end{lemma}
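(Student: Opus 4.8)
The plan is to exploit the fact that being a $G$-torsor is a local condition on $Y$ that can be checked after faithfully flat base change, combined with the standard structure theory of torsors under diagonalizable groups. First I would recall that $\pi$ is a $G$-torsor if and only if the map $\gamma$ of (\ref{eqn:graph}) is an isomorphism, equivalently $\cA \otimes_\cB \cA \cong \cA \otimes_k \cO(G)$ as $G$-linearized $\cA$-algebras. Using the $\Lambda$-grading (\ref{eqn:grading}) of $\cA$ together with the decomposition (\ref{eqn:algebra}) of $\cO(G)$, this condition translates into a statement about the graded pieces: the torsor condition should be equivalent to each $\cA_\lambda$ being an invertible $\cB$-module with the multiplication maps $\mult_{\lambda,\mu}$ realizing the group law on $\Lambda$ as tensor products. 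So the heart of the matter is to show that the isomorphy of all $\mult_{\lambda,\mu}$ encodes exactly this.

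For the forward direction, I would argue that if $\pi$ is a $G$-torsor, then étale-locally (or fppf-locally) on $Y$ it becomes the trivial torsor $G \times Y' \to Y'$, for which $\cA = \cB' \otimes_k \cO(G) = \bigoplus_\lambda \cB' \lambda$ and each $\mult_{\lambda,\mu}$ is visibly an isomorphism (it is multiplication of the basis characters). Since the formation of $\cA_\lambda$ and $\mult_{\lambda,\mu}$ commutes with the base change $Y' \to Y$ (as stated just before the lemma), and this base change is faithfully flat, isomorphy descends: $\mult_{\lambda,\mu}$ is an isomorphism over $Y$ as well.

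For the converse, suppose every $\mult_{\lambda,\mu}$ is an isomorphism. Taking $\mu = -\lambda$ shows $\cA_\lambda \otimes_\cB \cA_{-\lambda} \cong \cA_0 = \cB$, so each $\cA_\lambda$ is an invertible $\cB$-module and the graded algebra $\cA$ is what one might call a $\Lambda$-graded Azumaya-type algebra whose graded pieces are line bundles multiplying according to $\Lambda$. I would then check that $\gamma$ is an isomorphism by computing $\cA \otimes_\cB \cA$: using the gradings, $(\cA \otimes_\cB \cA)_{(\lambda,\mu)} = \cA_\lambda \otimes_\cB \cA_\mu$, and the comparison map to $\cA \otimes_k \cO(G) = \bigoplus_{\lambda,\nu} \cA_\lambda \otimes_k k\nu$ can be shown to be an isomorphism precisely because $\mult$ identifies $\cA_\lambda \otimes_\cB \cA_\mu$ with $\cA_{\lambda+\mu}$. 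This gives $X \times_Y X \cong G \times X$, i.e. $\pi$ is a torsor. The main obstacle will be keeping the bookkeeping of the two gradings straight and verifying that the abstract isomorphisms $\mult_{\lambda,\mu}$ assemble into the single algebra isomorphism witnessing $\gamma$; the commutativity with base change does the real work, so I would lean on it to reduce to the split/trivial case wherever possible.

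Finally, for the last sentence, once $\pi$ is known to be a torsor each $\cA_\lambda$ is an invertible $\cB$-module and $\mult_{\lambda,\mu}$ is an isomorphism; since $\sigma_{\lambda,n}$ is, by its very definition (\ref{eqn:sec}), the composite of the maps $\mult_{j\lambda,\lambda}$ for $j = 1, \ldots, n-1$ (up to reindexing of the iterated multiplication), it is a composition of isomorphisms of invertible $\cB$-modules and hence an isomorphism. I would write this out as a short induction on $n$: $\sigma_{\lambda,1} = \id$, and $\sigma_{\lambda,n+1}$ factors as $\cA_\lambda \otimes \sigma_{\lambda,n}$ followed by $\mult_{\lambda, n\lambda}$, both isomorphisms.
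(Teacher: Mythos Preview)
Your proposal is correct and follows essentially the same approach as the paper: both directions use the $\Lambda$-bigrading of $\cA \otimes_\cB \cA$ and of $\cO(G) \otimes_k \cA$ to reduce the isomorphy of $\gamma$ to that of the individual $\mult_{\lambda,\mu}$, with the forward direction handled by descent from the trivial torsor. The only minor difference is in the final sentence: the paper invokes the same descent argument again for $\sigma_{\lambda,n}$, whereas you factor $\sigma_{\lambda,n}$ inductively as a composite of $\mult$ maps; both are immediate and equally valid.
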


\begin{proof}
The first assertion follows from 
\cite[Exp.~VIII, Prop.~4.1]{SGA3};
we present a proof for completeness.
If $\pi$ is the trivial torsor 
$\pr_Y : G \times Y \to Y$,
then $\cA$ is identified with
$\cB \otimes_k \cO(G)$ as a $\cB$-$G$-algebra,
where $G$ acts on $\cO(G)$ via its action on
itself by multiplication. Using the 
decomposition (\ref{eqn:algebra}), 
this identifies each multiplication map 
$\mult_{\lambda,\mu}$ with the map
$\cB \otimes_k k \lambda \otimes_k k \mu
\to \cB \otimes_k k(\lambda + \mu)$
induced by the multiplication in $\cO(G)$.
As a consequence, $\mult_{\lambda,\mu}$ is an
isomorphism. By fpqc descent, this still holds 
if $\pi$ is an arbitrary $G$-torsor.

Conversely, assume that each 
$\mult_{\lambda,\mu}$ is an isomorphism.
Then the multiplication map induces 
an isomorphism 
$\cA_{\lambda} \otimes_{\cB} \cA_{- \lambda}
\stackrel{\sim}{\longrightarrow} \cB$ 
for any $\lambda \in \Lambda$. In particular,
every $\cA_{\lambda}$ is invertible, 
and hence $\pi$ is finite and flat.
To show that it is a $G$-torsor, it suffices
to check that the morphism
(\ref{eqn:graph}) is an isomorphism.
As in the proof of Lemma \ref{lem:aut},
we may assume that $X = \Spec(A)$ and 
$Y = \Spec(B)$. Then $\gamma$ corresponds
to the algebra homomorphism
\[ 
\gamma^{\#} : A \otimes_B A 
\longrightarrow \cO(G) \otimes_k A,
\quad a_1 \otimes a_2 \longmapsto
(g \mapsto (g \cdot a_1) a_2),
\]
where $\cO(G) \otimes_k A$ is identified
with the space of morphisms $G \to A$.
Therefore, 
$\gamma^{\#} = 
\bigoplus_{\lambda,\mu \in \Lambda}
\gamma^{\#}_{\lambda,\mu}$,
where 
$\gamma^{\#}_{\lambda,\mu} : 
A_{\lambda} \otimes_B A_{\mu} \to
k \lambda \otimes_k A_{\lambda + \mu}$
satisfies
$\gamma^{\#}_{\lambda,\mu}
= \lambda \otimes \mult_{\lambda,\mu}$.
Thus, each $\gamma^{\#}_{\lambda,\mu}$
is an isomorphism, and hence so is
$\gamma$. 

This completes the proof of the first 
assertion ; the second assertion is 
obtained by a similar descent argument.
\end{proof}

\begin{remark}\label{rem:torsor}
The above lemma takes a much more concrete 
form when $G = \bfmu_n$. Then for any 
$G$-torsor $\pi : X \to Y$, we have
\[ \cA = \bigoplus_{m = 0}^{n-1} \cA_m, \]
where each $\cA_m$ is an invertible
$\cB$-module.
The multiplication map yields isomorphisms
$\sigma_{1,m} : \cA_1^{\otimes m} 
\stackrel{\sim}{\longrightarrow} \cA_m$
for $m = 1, \ldots, n - 1$, together
with an isomorphism 
$\sigma = \sigma_{1,n} : \cA_1^{\otimes n} 
\stackrel{\sim}{\longrightarrow} \cB$.
Moreover, one may easily check that the 
resulting homomorphism of $\cB$-algebras
$\Sym_{\cB}(\cA_1) \to \cA$
factors through an isomorphism
\[ \Sym_{\cB}(\cA_1)/\cI 
\stackrel{\sim}{\longrightarrow} \cA, \]
where $\cI$ denotes the ideal of 
$\Sym_{\cB}(\cA_1)$ generated by the 
$a^n - \sigma(a^{\otimes n})$
for $a \in \cA_1$. 

Conversely, given an invertible sheaf $\cL$ 
on $Y$ together with a trivializing section 
$\sigma \in \Gamma(Y,\cL^{\otimes n}) 
= \Hom_{\cO_Y}(\cL^{\otimes -n}, \cO_Y)$, 
the $\cB$-algebra
$\cA = \Sym_{\cB}(\cL^{\otimes -1})/\cI$
(where $\cI$ is defined as above)
corresponds to a $\bfmu_n$-torsor satisfying
$\cA_1 = \cL^{\otimes -1}$. 
This gives back a well-known classification of 
$\bfmu_n$-torsors, see 
e.g.~\cite[III.4.5.6]{DG}. It may be extended 
to torsors under diagonalizable group schemes
with only notational difficulties.
\end{remark}

\begin{remark}\label{rem:cyclic}
More generally, consider a scheme $Y$
equipped with an invertible sheaf $\cL$
and a section 
$\sigma \in \Gamma(Y,\cL^{\otimes n})$,
where $n$ is a positive integer (we no
longer assume that $\sigma$ trivializes
$\cL$). Denote again by $\cI$ the ideal of
$\Sym_{\cB}(\cL^{\otimes -1})$ generated 
by the $a^n - \sigma(a^{\otimes n})$,
where $a \in \cL^{\otimes -1}$, and let 
$\cA = \Sym_{\cB}(\cL^{\otimes -1})/\cI$.
Then $\cA$ is $\bZ/n\bZ$-graded and
$\cA_m = \cL^{\otimes -m}$ for 
$m = 0,\ldots, n-1$. So 
$X = \Spec_{\cB}(\cA)$ is a 
$\bfmu_n$-scheme with quotient $Y$, and 
$\sigma = \sigma_{1,n}$; moreover,
the quotient morphism $\pi : X \to Y$
is flat.

Denote by $E$ the zero scheme of 
$\sigma$; this is an effective Cartier 
divisor on $Y$. Moreover, the ideal
sheaf $\cJ \subset \cA$ generated by 
$\cL^{\otimes -1}$ is homogeneous and
satisfies 
$\cA/\cJ \simeq \cO_Y/\sigma \cO_Y
= \cO_Y/\cO_Y(-E)$. 
Thus, $\cJ$ corresponds to an effective 
$\bfmu_n$-stable Cartier
divisor $D$ on $X$, sent isomorphically
to $E$ by $\pi$. Moreover, $D$ is the 
fixed point subscheme $X^{\bfmu_n}$,
and $\pi$ restricts to a $\bfmu_n$-torsor
$X \setminus \Supp(D) \to 
Y \setminus \Supp(E)$. We have
$\pi^* \cO_Y(-E) = \cO_X(-nD)$
in view of the definition of $\cA$;
equivalently, $\pi^*(E) = nD$. 

The above construction is classical 
if $n$ is prime to $p$; then 
$\pi : X \to Y$ is \'etale outside of
$D$, and totally ramified along $D$. 
For an arbitrary $n$, this yields 
exactly the \emph{uniform cyclic covers}
considered in \cite{AV}.
\end{remark}

\begin{lemma}\label{lem:faithful}
Let $X$ be a $G$-variety. 

\begin{enumerate}
\item[{\rm (i)}] 
Each $\cA_{\lambda}$ is a torsion-free
$\cB$-module of rank $1$, and 
$\Lambda(X) = \{ \lambda \in \Lambda
~\vert~ \cA_{\lambda} \neq 0 \}$
is a subgroup of $\Lambda$. 
\item[{\rm (ii)}]
The $G$-action on $X$ is faithful if and 
only if $\Lambda(X) = \Lambda$; under these
assumptions, the action is generically free.
\end{enumerate}
\end{lemma}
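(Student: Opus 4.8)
The plan is to reduce everything to the generic point of $Y$, where the grading acquires a rigid form. First I would record that $\cA = \pi_*\cO_X$ is a sheaf of integral domains: over an affine open $\Spec(\cB) \subseteq Y$ its sections are $\cO_X$ of a nonempty open subscheme of the integral scheme $X$, hence a domain. As $\cB$ is a domain, $\cA$ is $\cB$-torsion-free, and therefore so is each graded piece $\cA_\lambda$; this already yields the torsion-freeness in (i). Writing $L = k(Y)$ and localizing at the generic point $\eta$ of $Y$, the finite $\cB$-algebra $\cA_\eta = \cA \otimes_{\cB} L$ is a localization of the domain $\cA$ that is finite-dimensional over the field $L$, hence is itself a field $F$ (necessarily $k(X)$). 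Since localization commutes with the direct sum, the grading passes to $F = \bigoplus_{\lambda} F_\lambda$ with $F_0 = L$ and $F_\lambda = (\cA_\lambda)_\eta$, and the $\cB$-rank of $\cA_\lambda$ equals $\dim_L F_\lambda$.

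The key step for (i) is the elementary structure of such a \emph{graded field}. Any nonzero homogeneous $x \in F_\lambda$ is invertible in $F$, and expanding $x^{-1}$ into homogeneous components shows $x^{-1} \in F_{-\lambda}$. Consequently, if $F_\lambda \neq 0$ then $F_{-\lambda} \neq 0$, and if $F_\lambda, F_\mu \neq 0$ then $F_{\lambda+\mu} \supseteq F_\lambda F_\mu \neq 0$; thus $\{\lambda : F_\lambda \neq 0\}$ is a subgroup of $\Lambda$. Moreover, for nonzero $x \in F_\lambda$, multiplication by $x$ carries $F_0 = L$ onto $F_\lambda$ (given $y \in F_\lambda$ one has $yx^{-1} \in F_0 = L$), so $\dim_L F_\lambda = 1$. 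By torsion-freeness, $\cA_\lambda \neq 0$ exactly when $F_\lambda \neq 0$, so $\Lambda(X)$ coincides with $\{\lambda : F_\lambda \neq 0\}$, a subgroup, and each nonzero $\cA_\lambda$ has rank $1$. This gives (i).

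For the first assertion of (ii) I would translate faithfulness through the grading. A subgroup scheme $H \subseteq G$ acts trivially on $X$ if and only if it acts trivially on $\cA$ (the $G$-action on $X$ being equivalent to the $\Lambda$-grading on $\cA = \pi_*\cO_X$); since $H$ acts on the rank-one module $\cA_\lambda$ through the restricted character $\lambda|_H$, this happens precisely when $\lambda|_H = 1$ for all $\lambda \in \Lambda(X)$, i.e.\ when $\Lambda(X) \subseteq \Lambda(G/H)$. Using the bijection between subgroup schemes of $G$ and subgroups of $\Lambda$ (via $H \mapsto \Lambda(G/H)$), the largest $H$ acting trivially corresponds to the subgroup $\Lambda(X)$ itself; hence the action is faithful (no nontrivial $H$ acts trivially) if and only if $\Lambda(X) = \Lambda$.

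Finally, assuming faithfulness, I would deduce generic freeness from Lemma \ref{lem:torsor}. With $\Lambda(X) = \Lambda$, each $F_\lambda$ is one-dimensional and spanned by a unit of $F$, so the localization at $\eta$ of each multiplication map $\mult_{\lambda,\mu}$ sends a product of basis vectors to a nonzero, hence basis, element of $F_{\lambda+\mu}$, and is therefore an isomorphism. As $\Lambda$ is finite and the $\mult_{\lambda,\mu}$ are morphisms of coherent $\cB$-modules on the integral scheme $Y$, they are isomorphisms over a common dense open $V \subseteq Y$; since their formation commutes with the base change $V \hookrightarrow Y$, Lemma \ref{lem:torsor} shows that $\pi^{-1}(V) \to V$ is a $G$-torsor, so $G$ acts freely on $\pi^{-1}(V)$. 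Hence $X_{\fr} \supseteq \pi^{-1}(V) \neq \emptyset$ and the action is generically free. The only genuinely delicate points are the passage to the graded field $\cA_\eta$ and the bookkeeping identifying the kernel of the action with the subgroup $\Lambda(X)$; the remainder is spreading-out together with a direct appeal to Lemma \ref{lem:torsor}.
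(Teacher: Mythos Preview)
Your proof is correct. Part (i) and the faithfulness equivalence in (ii) follow the paper's argument essentially verbatim: both reduce to the generic fibre $F = \cA_\eta = k(X)$, observe that this is a $\Lambda$-graded field over $L = k(Y)$, and read off that the nonzero homogeneous components are one-dimensional and form a subgroup; the identification of the kernel of the action with the subgroup scheme corresponding to $\Lambda(X)$ is likewise the same.

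The one genuine difference is in the proof of generic freeness. You spread out the isomorphisms $(\mult_{\lambda,\mu})_\eta$ from the generic point to a dense open $V \subset Y$ and invoke Lemma~\ref{lem:torsor} to conclude that $\pi^{-1}(V) \to V$ is a $G$-torsor. The paper instead chooses homogeneous elements $f_1,\ldots,f_n \in A$ whose weights generate $\Lambda$ and builds a $G$-equivariant morphism $U = X_{f_1} \cap \cdots \cap X_{f_n} \to \bG_m^n$, where $G$ acts on the target through the chosen characters; since that action is free, so is the $G$-action on $U$. Your route is a clean reuse of Lemma~\ref{lem:torsor} and avoids choosing generators, while the paper's construction is more hands-on and yields an explicit free open subset directly inside $X$ rather than as a pullback from $Y$.
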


\begin{proof}
(i) We may assume again that $X = \Spec(A)$
and $Y = \Spec(B)$. Then
$A = \bigoplus_{\lambda \in \Lambda}
A_{\lambda}$ 
is a $\Lambda$-graded domain, hence
$B$ is a domain and each $A_{\lambda}$ 
is a torsion-free $B$-module. Denoting
by $K$ (resp.~$L$) the fraction field
of $A$ (resp.~$B$), we have 
$K = A \otimes_B L$ (see 
e.g.~\cite[Lem.~2.3 (ii)]{Br24}) and hence
$K = \bigoplus_{\lambda \in \Lambda}
K_{\lambda}$, where $K_0 = L$ and
$K_{\lambda} = A_{\lambda} \otimes_B L$.
Thus, $\lambda \in \Lambda(X)$ if and
only if $K_{\lambda} \neq 0$; then 
$K_{\lambda}$ is a $1$-dimensional 
$L$-vector space. Moreover,
if $K_{\lambda}$ and $K_{\mu}$ are
nonzero, then also 
$K_{\lambda -\mu}$ is nonzero. This 
implies the assertion.

(ii) Since $X$ is affine, the $G$-action
is faithful if and only if so is its linear 
representation in $A$; equivalently, the 
intersection of the kernels of the 
characters $\lambda \in \Lambda(X)$ is 
trivial. In turn, this is equivalent to 
the equality $\Lambda(X) = \Lambda$.
We may then choose finitely many
homogeneous elements 
$f_1,\ldots, f_n \in A$
of respective weights 
$\lambda_1, \ldots, \lambda_n$
which generate the group $\Lambda$.
Then  
$U = X_{f_1} \cap \cdots \cap X_{f_n}$
is a dense open $G$-stable subset of 
$X$, equipped with a $G$-equivariant
morphism 
$(f_1,\ldots,f_n) : U \to \bG_m^n$
where $G$ acts on $\bG_m^n$ via
$g \cdot (t_1,  \dots,  t_n) =
(\lambda_1(g) t_1,  \ldots, 
\lambda_n(g) t_n)$.
Since the latter action is free, 
we see that $G$ acts freely on $U$.
\end{proof}

\section{Equivariantly normal varieties}
\label{sec:env}

Let $G$ be a finite group scheme, and
$X$ a $G$-variety. Following 
\cite[Def.~4.1]{Br24}, we say that $X$ is 
$G$-\emph{normal} if every finite birational
morphism of $G$-varieties $X' \to X$
is an isomorphism. 

By loc.~cit., \S 4, this notion of equivariant
normality is related to the usual notion of 
normality as follows: we may view $G$ 
as a closed subgroup scheme of a smooth
connected algebraic group $G^{\#}$ 
(for example, we may embed $G$ in $\GL_n$ 
via the regular representation, where
$n = \vert G \vert$). Given a $G$-variety
$X$, the product $G^{\#} \times X$
is a variety equipped with an action of
$G^{\#} \times G$ via
$(a,g) \cdot (b,x) = (a b g^{-1}, g \cdot x)$.
Since $G^{\#}$ and $X$ are covered by open 
affine $G$-stable subsets, the quotient
$X^{\#} = (G^{\#} \times X)/G$ exists;
this is a variety equipped with an action
of $G^{\#}$ and a $G^{\#}$-equivariant 
morphism $\varphi : X^{\#} \to G^{\#}/G$.
The fibre of $\varphi$ at the base point
of the homogeneous space $G^{\#}/G$
is $G$-equivariantly isomorphic to $X$.
We say that $X^{\#}$ is the 
\emph{induced variety} $G^{\#} \times^G X$.

\begin{lemma}\label{lem:normal}
With the above notation, $X$ is $G$-normal
if and only if $X^{\#}$ is normal.
\end{lemma}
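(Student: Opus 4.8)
The plan is to reduce $G$-normality of $X$ to a $G^{\#}$-equivariant normality statement for $X^{\#}$, and then to strip off the equivariance using that $G^{\#}$ is smooth and connected. The basic tool is the quotient morphism $q : G^{\#} \times X \to X^{\#}$, which is a $G$-torsor: indeed $G$ acts on $G^{\#} \times X$ by $g \cdot (b,x) = (b g^{-1}, g \cdot x)$, and this action is free since right translation on $G^{\#}$ is free. Alongside $q$ I would use the induction functor $X' \mapsto G^{\#} \times^G X'$ from $G$-varieties to $G^{\#}$-varieties. First I would record that induction sends a finite birational $G$-morphism $f : X' \to X$ to a finite birational $G^{\#}$-morphism $f^{\#} : G^{\#} \times^G X' \to X^{\#}$: finiteness and birationality may be checked after the finite faithfully flat base change $q$, over which $f^{\#}$ pulls back to $\id_{G^{\#}} \times f$, and both properties descend along $q$; moreover $G^{\#} \times^G X'$ is again a variety, being the categorical quotient of the variety $G^{\#} \times X'$ (recall $G^{\#}$ is geometrically integral, so $G^{\#} \times X'$ is a variety). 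Conversely, restricting a $G^{\#}$-variety over $X^{\#}$ to the fibre of $\varphi : X^{\#} \to G^{\#}/G$ over the base point $o$ recovers a $G$-variety over $X = \varphi^{-1}(o)$, and these two operations are mutually inverse equivalences that preserve and reflect the properties of being finite, birational, and an isomorphism.

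With this correspondence in hand, the direction ``$X^{\#}$ normal $\Rightarrow$ $X$ is $G$-normal'' is immediate. Given a finite birational $G$-morphism $f : X' \to X$, the induced morphism $f^{\#}$ is a finite birational morphism from a variety to the normal variety $X^{\#}$, hence an isomorphism; restricting to the fibre over $o$ then shows that $f$ itself is an isomorphism, so $X$ is $G$-normal.

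For the converse the key step — and the one I expect to be the main obstacle — is to show that the normalization $\nu : \widetilde{X^{\#}} \to X^{\#}$ is automatically $G^{\#}$-equivariant. Here I would use crucially that $G^{\#}$ is smooth and connected, hence geometrically integral and geometrically normal: consequently $G^{\#} \times \widetilde{X^{\#}}$ is a normal variety, and $\id_{G^{\#}} \times \nu$ is birational. The action morphism composed with $\id_{G^{\#}} \times \nu$ is then a dominant morphism from a normal variety to $X^{\#}$, which by the universal property of the normalization factors uniquely through $\nu$; uniqueness of the factorization forces the resulting morphism $G^{\#} \times \widetilde{X^{\#}} \to \widetilde{X^{\#}}$ to be an action lifting the given one, so that $\nu$ becomes $G^{\#}$-equivariant. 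This is exactly the point at which the hypothesis on $G^{\#}$ is used, and it is precisely what fails for the finite group scheme $G$, where $G \times \widetilde{X^{\#}}$ need not even be reduced; this contrast is the whole reason $G$-normality was introduced in \cite{Br24}. Granting the equivariance, $\nu$ is a $G^{\#}$-equivariant finite birational morphism, so by the correspondence above it is the induction $f^{\#}$ of a finite birational $G$-morphism $f : X' \to X$, namely the fibre of $\nu$ over $o$. If $X$ is $G$-normal then $f$ is an isomorphism, hence so is $\nu = f^{\#}$, and therefore $X^{\#}$ is normal. Combining the two directions completes the proof.
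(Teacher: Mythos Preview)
Your argument is correct and is the natural one: the induction/restriction equivalence between $G$-varieties and $G^{\#}$-varieties over $G^{\#}/G$, together with the fact that the $G^{\#}$-action lifts to the normalization because $G^{\#}$ is smooth and connected, is exactly the mechanism at work. The paper itself does not spell out a proof but simply points to the argument of \cite[Lem.~4.7]{Br24} (which treats the quotient by $G^0$ instead of $G$); your write-up is a faithful reconstruction of that argument.
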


This is proved by the same argument as
\cite[Lem.~4.7]{Br24}, which deals with
the quotient $(G^{\#} \times X)/G^0$.
As an application, we show:

\begin{lemma}\label{lem:hom}
Let $Y$ be a variety, and $u : X \to X'$ 
a morphism of $G$-varieties over $Y$. 
If $X'$ is $G$-normal and generically free, 
then $u$ is an open immersion.
\end{lemma}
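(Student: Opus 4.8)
The plan is to reduce to an affine, and then to a generic, statement where the condition ``generically free'' can be turned into the concrete criterion $\Lambda(X') = \Lambda$ from Lemma \ref{lem:faithful}. First I would observe that the question is local on $Y$, so I may assume $Y = \Spec(B)$, $X = \Spec(A)$, and $X' = \Spec(A')$ are affine, with $A, A'$ the $\Lambda$-graded $B$-algebras attached to the two $G$-actions. The morphism $u$ then corresponds to a graded $B$-algebra homomorphism $u^{\#} : A' \to A$, and I want to show $u$ is an open immersion, i.e.\ that $u^{\#}$ identifies $A'$ with a localization of $A'$ that is also a localization (at suitable elements) realizing $A$ as an open subscheme. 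The key leverage is that, since $X'$ is generically free, Lemma \ref{lem:faithful}(ii) gives $\Lambda(X') = \Lambda$, so each eigenspace $A'_{\lambda}$ is a \emph{rank one} torsion-free $B'$-module and, over the generic point, $A' \otimes_{B'} L'$ is a $\Lambda$-graded field extension with all graded pieces nonzero and one-dimensional.

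Next I would exploit $G$-normality of $X'$. By Lemma \ref{lem:normal}, $G$-normality of $X'$ is equivalent to normality of the induced variety $(X')^{\#} = G^{\#} \times^G X'$; so I can transport the problem to ordinary varieties, where the statement becomes: a morphism of varieties over the homogeneous base $G^{\#}/G$ into a normal, generically free target, which is birational onto its image in the appropriate sense, must be an open immersion. The heart of the matter is that over the dense open locus $Y_{\fr}$ where $\pi'$ is a $G$-torsor, the map $u$ restricts to a morphism of $G$-torsors over the same base, hence an isomorphism; so $u$ is birational. An open immersion is exactly a birational morphism that is an isomorphism onto an open subset, and the normality of $(X')^{\#}$ (equivalently the $(S_2)$/$(R_1)$-type properties of $X'$ asserted in the introduction) is what upgrades ``birational into a normal target'' to ``open immersion'' via Zariski's Main Theorem.

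Concretely, the chain I would run is: (1) reduce to the torsor locus to see that $u$ is birational and quasi-finite; (2) apply Zariski's Main Theorem to factor $u$ as an open immersion followed by a finite morphism $X \hookrightarrow \overline{X} \to X'$; (3) use that $X'$ is $G$-normal to conclude that the finite birational $G$-equivariant morphism $\overline{X} \to X'$ (or rather its equivariant normalization, after replacing $\overline{X}$ by its closure inside a $G$-variety) is an isomorphism, exactly by the \emph{defining} property of $G$-normality that finite birational morphisms of $G$-varieties onto $X'$ are isomorphisms. This forces the finite part to be trivial, leaving $u$ as an open immersion. I would need to be a little careful that the intermediate scheme produced by ZMM can be taken $G$-equivariantly and as a $G$-variety over $Y$, which is where passing through the induced variety $(X')^{\#}$ is genuinely useful, since there the objects are honest varieties.

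The step I expect to be the main obstacle is the equivariance in Zariski's Main Theorem: the classical ZMM produces an intermediate scheme $\overline{X}$ that is a priori not $G$-stable, so I must either run the argument on the induced variety $(X')^{\#}$ (where $G$ has been absorbed into $G^{\#}$ and everything is an ordinary variety, so ZMM applies cleanly) and then descend, or construct the $G$-equivariant factorization directly by taking the $G$-equivariant normalization of $X'$ in the function field of $X$. Making this descent precise, and checking that the finite birational $G$-variety so produced is legitimately an object to which the definition of $G$-normality applies, is the crux; the remaining reduction to the affine/generic picture and the torsor-locus birationality are routine given Lemmas \ref{lem:faithful} and \ref{lem:normal}.
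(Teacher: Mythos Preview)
Your core strategy---show $u$ is birational and quasi-finite, pass to the induced varieties $X^{\#} \to X'^{\#}$, and invoke Zariski's Main Theorem against the normal target $X'^{\#}$---is exactly what the paper does. Two points, however, deserve attention.

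First, the lemma is stated for an arbitrary finite group scheme $G$, not a diagonalizable one. Your opening reduction to affines and $\Lambda$-graded algebras via Lemma~\ref{lem:faithful} is therefore both unnecessary and too restrictive; nothing in the actual argument requires it, and you should simply drop it. Relatedly, your birationality step asserts that over $Y_{\fr}$ the map $u$ is ``a morphism of $G$-torsors.'' For diagonalizable $G$ this is fine because faithful implies generically free (Lemma~\ref{lem:faithful}), but for general $G$ you have not justified that $X$ is a torsor over $Y_{\fr}$. The paper handles this by invoking \cite[Cor.~2.5]{Br24}: the generic fibre $X_{\eta}$ is $G_{\kappa(\eta)}$-\emph{homogeneous}, and a $G$-equivariant morphism from a homogeneous space to a torsor is automatically an isomorphism.

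Second, your step~(3)---factoring $u$ as an open immersion followed by a finite morphism and then killing the finite part via the defining property of $G$-normality---is more elaborate than needed. Once $u^{\#}$ is birational and quasi-finite with normal target, ZMT gives directly that $u^{\#}$ is an open immersion; since $u^{\#}$ is $G^{\#}$-equivariant, its image is $G^{\#}$-stable, and restricting to the fibre over the base point of $G^{\#}/G$ recovers $u$ as an open immersion. No separate equivariant factorization is required, which resolves the very obstacle you flagged in your last paragraph.
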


\begin{proof}
Denote by $\eta$ the generic point of $Y$; 
then $u$ induces a $G_{\kappa(y)}$-equivariant
morphism $u_{\eta} : X_{\eta} \to X'_{\eta}$
between generic fibres. Moreover, $X'_{\eta}$
is a $G_{\kappa(y)}$-torsor by assumption, and
$X_{\eta}$ is $G_{\kappa(y)}$-homogeneous in view
of \cite[Cor.~2.5]{Br24}. Thus, $u_{\eta}$
is an isomorphism, and hence $u$ is birational.
Also, $u$ is quasi-finite, since 
$\pi = \pi' \circ u$ is finite. We now
consider the induced varieties $X^{\#}$, 
$X'^{\#}$ and the induced morphism
$u^{\#} : X^{\#} \to X'^{\#}$, which is still
birational and quasi-finite. Since $X'^{\#}$
is normal (Lemma \ref{lem:normal}), 
$u^{\#}$ is an open immersion by Zariski's 
Main Theorem.  Thus, $u$ is an open immersion.
\end{proof}

\begin{remark}\label{rem:Tvar}
A finite group scheme $G$ is diagonalizable 
if and only if it is isomorphic to a subgroup
scheme of a torus $T \simeq \bG_m^n$. 
Then the assignement
$X \mapsto X^{\#} = T \times^G X$ 
defines a bijection
between $G$-normal varieties and normal
$T$-varieties equipped with an equivariant
morphism $\varphi$ to the quotient torus 
$T/G$, whose fibre at the base point is 
geometrically integral. The dimension of $X$ 
is the \emph{complexity} of the $T$-variety 
$X^{\#}$ (the transcendence degree of the 
field of $T$-invariant rational functions).

Given a torus $T$, there is a classification 
of normal $T$-varieties in terms of objects 
mixing convex and algebraic geometry, called 
proper polyhedral divisors and divisorial fans,
which generalize the cones and fans associated
with toric varieties. Over an algebraically 
closed field $k$ of characteristic $0$, 
this classification is obtained in \cite{AH} 
for affine varieties; the general case is
treated in \cite{AHS}. Over an arbitrary field,
normal affine $T$-varieties of complexity
$1$ are also classified by proper polyhedral
divisors, as shown in \cite{Langlois}. 
Yet we do not know how to deduce from this 
a full classification of $G$-normal varieties,
which incorporates the additional datum of
the morphism $\varphi$.
\end{remark}

\begin{remark}\label{rem:linred}
Returning to an arbitrary finite group 
scheme $G$, note that a $G$-variety $X$ 
is $G$-normal if and only if $X_K$ is 
$G^0_K$-normal for some separable field 
extension $K/k$ (see 
\cite[Cor.~4.8, Prop.~4.9]{Br24}).
If $G$ is linearly reductive, we may 
take for $K/k$ a finite Galois extension
such that $G^0_K$ is diagonalizable
and $\pi_0(G)_K$ is constant. Then 
the quotient morphism $X_K \to Y_K$
factors uniquely as $X_K \to Z_K \to Y_K$, 
where $Z_K = X_K/G^0_K$ is a normal variety
and $Z_K \to Y_K$ is a finite Galois cover
with group $\pi_0(G)(K)$,  of order prime to $p$. 
This reduces a number of questions on 
equivariantly normal varieties under 
linearly reductive group schemes to the 
diagonalizable case; see the final Section
\ref{sec:lrgs} for an illustration.
\end{remark}

Next, we obtain a criterion for $G$-normality 
of a $G$-variety $X$ in terms of 
the $\cB$-algebra $\cA$, with the notation 
of Section \ref{sec:fgsa}. 
Clearly, we may assume that $G$ acts
faithfully on $X$.

Let $D$ be a prime divisor on $X$.
Then $G \cdot D$ is a closed $G$-stable
subscheme of $X$,  of pure codimension $1$.
We denote by $\cO_{X,G \cdot D}$ the 
corresponding semi-local ring; it has Krull
dimension $1$ and is equipped with a 
(functorial) $G$-action
(Lemma \ref{lem:semiloc}).  
We have $\cO_{X,G \cdot D}^G = \cO_{Y,E}$,
where $E = \pi(D)$ is a prime divisor 
on $Y$. Moreover, $\cO_{X,G \cdot D}$ 
has a largest $G$-stable ideal 
$I = I_{G \cdot D}$, since $G \cdot D$ 
is the smallest $G$-stable subscheme of 
$X$ containing $D$. (We may think of
$\cO_{X,G \cdot D}$ as a ``$G$-local'' ring
of dimension $1$).

We say that $D$ is \emph{free} 
if it meets the free locus $X_{\fr}$,
and \emph{nonfree} otherwise. If $G$ acts
generically freely on $X$, then there are 
only finitely many nonfree divisors: 
the irreducible components of codimension $1$ 
of the nonfree locus $X \setminus X_{\fr}$.

If $G$ is diagonalizable with character group
$\Lambda$, then the algebra $\cO_{X,G \cdot D}$ 
is $\Lambda$-graded by Lemma 
\ref{lem:semiloc} again. 
Thus, $I_{G \cdot D}$ is the largest 
$\Lambda$-graded ideal of $\cO_{X,G \cdot D}$. 
Moreover, the $G$-action on $X$ is 
generically free (Lemma \ref{lem:faithful}).

\begin{proposition}\label{prop:Gnormal}
Let $G$ be a finite group scheme, 
$Y$ a variety, and $X$ a $G$-variety over $Y$. 
Then $X$ is $G$-normal if and only if 
it satisfies the following conditions:

\begin{enumerate}
\item[{\rm (i)}] The variety $Y$ is normal.
\item[{\rm (ii)}] The coherent sheaf of 
$\cB$-algebras $\cA$ satisfies 
$(S_2)$.
\item[{\rm (iii)}] For any nonfree divisor
$D$, the ideal $I_{G \cdot D}$ is principal. 
\end{enumerate}

If $G$ is diagonalizable, then {\rm (ii)} 
is equivalent to 

\begin{enumerate}
\item[{\rm (ii)'}] 
The sheaf $\cA_{\lambda}$ is divisorial
for any $\lambda \in \Lambda$.
\end{enumerate}

\end{proposition}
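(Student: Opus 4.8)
The plan is to reduce everything to ordinary normality of the induced variety $X^{\#} = G^{\#} \times^G X$ via Lemma~\ref{lem:normal}, and then to translate Serre's criterion ``$X^{\#}$ normal $\iff (R_1)$ and $(S_2)$'' into the three conditions. I take $G^{\#} = \GL_{\vert G \vert}$, so $G^{\#}$ is smooth, connected and reductive. For the $(S_2)$ part, the quotient map $p : G^{\#} \times X \to X^{\#}$ is a $G$-torsor, hence finite and faithfully flat, while $G^{\#} \times X$ differs from $X$ only by the smooth factor $G^{\#}$. Since depth and dimension are preserved along finite flat local homomorphisms and along smooth base change, $X^{\#}$ satisfies $(S_2)$ if and only if $X$ does, i.e.\ if and only if the finite $\cB$-algebra $\cA = \pi_*\cO_X$ satisfies $(S_2)$ as an $\cO_Y$-module; this is exactly condition (ii). For the normality of $Y$, one checks that $\cO(Y) = \cO(X^{\#})^{G^{\#}}$ (the left-translation invariants of $\cO(G^{\#})$ being the constants), so $\cB$ is an invariant subring of $\cO(X^{\#})$; as the invariant subring of a normal domain is always normal, normality of $X^{\#}$ forces condition (i), which conversely is an input for the $(R_1)$ analysis below.

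It remains to decode $(R_1)$ for $X^{\#}$, which I would do by classifying the codimension-one points of $X^{\#}$ according to their image in $Y$. A point lying over the generic point of $Y$ sits in the generic fibre of $X^{\#} \to Y$, a $G^{\#}$-torsor and hence smooth, so it is regular. Over a prime divisor $E \subset Y$ meeting the free locus, the morphism $X^{\#} \to Y$ is a $G^{\#}$-torsor near $E$, hence smooth, and regularity of $X^{\#}$ there is equivalent to $\cO_{Y,E}$ being a discrete valuation ring, which is guaranteed by (i). Thus the only codimension-one points that can obstruct $(R_1)$ are those lying over the images $E = \pi(D)$ of the finitely many nonfree divisors $D$, and this is precisely where condition (iii) must intervene.

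To treat these, I localize over the discrete valuation ring $S = \cO_{Y,E}$ and pass to the one-dimensional semi-local domain $R = \cO_{X, G\cdot D}$, which is finite and $(S_2)$ over $S$, with $R^G = S$ and largest $G$-stable ideal $I = I_{G\cdot D}$. Applying the criterion of Lemma~\ref{lem:normal} to this local model, $X^{\#}$ is $(R_1)$ over $E$ if and only if $\Spec R$ is $G$-normal, and the claim to be proved is that this is equivalent to $I$ being principal. If $I$ is not principal, then the blow-up of the $G$-stable ideal $I$ (equivalently the idealizer $\bigcup_n \Hom_R(I^n,I^n)$) yields a nontrivial finite birational morphism of $G$-varieties onto $\Spec R$, contradicting $G$-normality. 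If $I = (t)$ is principal, then $t$ may be taken to be a semi-invariant (homogeneous, hence affording a character, when $G$ is diagonalizable) and is a nonzerodivisor because $R$ is $(S_2)$ of dimension one, so $\Spec R$ is a cyclic cover of $\Spec S$ in the sense of Remark~\ref{rem:cyclic}, whose induced variety is regular in codimension one. This equivalence is the \emph{crux} of the argument and the step I expect to be the main obstacle; once it is established, combining the three criteria gives $X^{\#}$ regular in codimension one, and the whole chain of equivalences is reversible, yielding both directions of the proposition.

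Finally, for the diagonalizable refinement I would use Lemma~\ref{lem:faithful}, which gives the decomposition $\cA = \bigoplus_{\lambda \in \Lambda} \cA_\lambda$ with each $\cA_\lambda$ torsion-free of rank one over $\cB$. Since the depth of a direct sum is the minimum of the depths of the summands, $\cA$ satisfies $(S_2)$ if and only if every $\cA_\lambda$ does. Granting condition (i), so that $Y$ is normal, a rank-one torsion-free sheaf is divisorial exactly when it satisfies $(S_2)$; hence (ii) is equivalent to the requirement that each $\cA_\lambda$ be divisorial, which is condition (ii)$'$.
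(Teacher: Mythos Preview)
Your overall architecture --- reduce to ordinary normality of $X^{\#}$ via Lemma~\ref{lem:normal} and then unpack Serre's criterion --- is sound and is indeed what lies behind the reference \cite[Thm.~4.12]{Br24} that the paper simply cites. Your treatments of $(S_2)$, of the normality of $Y$, and of the equivalence (ii)$\Leftrightarrow$(ii)$'$ are essentially correct and match the paper's.

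The genuine gap is in your ``crux'' step: you have not shown that principality of $I_{G\cdot D}$ forces $(R_1)$ of $X^{\#}$ (equivalently, $G$-normality of $\Spec R$) over a nonfree divisor. Your proposed mechanism --- ``$t$ may be taken semi-invariant, so $\Spec R$ is a cyclic cover of $\Spec S$ in the sense of Remark~\ref{rem:cyclic}'' --- does not work. First, the proposition is stated for an \emph{arbitrary} finite group scheme $G$, where there is no reason a generator of a $G$-stable ideal should be semi-invariant. Second, even when $G$ is diagonalizable and a homogeneous generator exists (which itself needs proof; see Proposition~\ref{prop:local}~(i) later in the paper), $R$ is \emph{not} of the form $S[T]/(T^n-g)$ unless $G$ happens to be cyclic: in general $R=\bigoplus_{\lambda\in\Lambda}R_\lambda$ with $\Lambda$ an arbitrary finite abelian group, and the cyclic-cover description only applies to the quotient $R \to R^H$ by a suitable cyclic subgroup $H\subset G$, with a further $G/H$-torsor $R^H\to S$ on top. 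So the structural shortcut you invoke is not available at this point of the argument.

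What the paper does instead is both simpler and uniform in $G$: rather than analyze the structure of $R$, it shows directly that \emph{every} $G$-stable ideal of $R=\cO_{X,G\cdot D}$ is principal once the maximal one $I_{G\cdot D}$ is. Writing $I_{G\cdot D}=(f)$, any $G$-stable ideal $I$ is contained in $(f)$, hence $I=fJ$ for a $G$-stable ideal $J$ with $I\subsetneq J$; since $R$ has Krull dimension one and is finite over the DVR $S$, the quotient $R/I$ has finite length, and an induction on this length (the base case being $I=I_{G\cdot D}$) finishes the argument. This replaces your unproven structural claim by a two-line length induction, and it is the missing idea in your proposal.
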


Recall that a coherent sheaf $\cF$ satisfies 
Serre's condition ($S_2$) if it has depth 
at least $2$ at every point of codimension 
at least $2$ of its support.  Before proving
Proposition \ref{prop:Gnormal}, 
we record two preliminary results which
can be extracted from \cite{EGA}.

\begin{lemma}\label{lem:S2}
Let $\cF$ be a coherent torsion-free sheaf
on a variety $Z$. 

\begin{enumerate}
\item[{\rm (i)}] $\cF$ satisfies ($S_2$) 
if and only if the restriction map 
$\cF(V) \to \cF(U)$
is an isomorphism for any
open subsets $U \subset V \subset Z$ 
such that $V$ is big in $U$.
\item[{\rm (ii)}] Given a finite surjective 
morphism of varieties 
$\pi: Z \to W,$ the sheaf $\cF$ satisfies
($S_2$) if and only if so does 
$\varphi_*(\cF)$.
\end{enumerate}

\end{lemma}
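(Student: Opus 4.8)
The plan is to reduce both parts to the standard dictionary, available in \cite{EGA}, between Serre's condition and the vanishing of local cohomology in low degrees: for a coherent sheaf $\cF$ on a noetherian scheme $V$ and a closed subset $Y \subset V$, one has $H^i_Y(V,\cF) = 0$ for all $i < n$ if and only if $\operatorname{depth}_{\cO_{V,z}}(\cF_z) \geq n$ for every $z \in Y$; in particular $\operatorname{depth}_{\cO_{V,z}}(\cF_z) = \inf\{ i : H^i_{\fm_z}(\cF_z) \neq 0\}$. Since $\cF$ is torsion-free on the integral scheme $Z$, its support is all of $Z$, so the points of codimension $\geq 2$ of $\Supp(\cF)$ are exactly those of $Z$, and $(S_2)$ means precisely that $\operatorname{depth}_{\cO_{Z,z}}(\cF_z) \geq 2$ at every $z$ of codimension $\geq 2$. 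The central tool is the exact sequence of local cohomology
\[ 0 \to H^0_Y(V,\cF) \to \cF(V) \to \cF(U) \to H^1_Y(V,\cF) \]
attached to an open immersion $U = V \setminus Y \hookrightarrow V$.

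For (i), write $U \subset V$ with $Y = V \setminus U$ of codimension $\geq 2$. In the forward direction, every $z \in Y$ has codimension $\geq 2$, so $(S_2)$ gives $\operatorname{depth}_{\cO_{Z,z}}(\cF_z) \geq 2$; the dictionary yields $H^0_Y(V,\cF) = H^1_Y(V,\cF) = 0$, and the displayed sequence shows $\cF(V) \stackrel{\sim}{\longrightarrow} \cF(U)$. For the converse I would test the hypothesis at a fixed point $z$ of codimension $\geq 2$: running $V$ over the open neighborhoods of $z$ and taking $U = V \setminus \overline{\{z\}}$ (which is big in $V$), the assumed isomorphisms $\cF(V) \stackrel{\sim}{\longrightarrow} \cF(U)$ pass, in the colimit over such $V$, to an isomorphism between $\cF_z$ and the sections of $\cF_z$ on the punctured spectrum of $\cO_{Z,z}$; comparing with the local cohomology sequence of $\cF_z$ this forces $H^0_{\fm_z}(\cF_z) = H^1_{\fm_z}(\cF_z) = 0$, i.e. $\operatorname{depth}_{\cO_{Z,z}}(\cF_z) \geq 2$. (Injectivity alone, i.e. $H^0_Y = 0$, is automatic from torsion-freeness.)

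For (ii), the point is that the finite morphism $\pi$ preserves every ingredient of the dictionary. Being finite, $\pi$ is affine, so $\pi_*$ is exact, preserves coherence, and sends $\cF$ to a torsion-free $\cO_W$-module (an $\cO_W$-torsion section would already be $\cO_Z$-torsion). Being finite and surjective between varieties, $\pi$ preserves dimensions of closures, whence $\codim_Z(z) = \codim_W(\pi(z))$; thus $z$ has codimension $\geq 2$ exactly when $w := \pi(z)$ does, and every codimension-$\geq 2$ point of $W$ is attained. Finally, for $w \in W$ with fibre $\pi^{-1}(w) = \{z_1,\ldots,z_r\}$, the stalk $(\pi_*\cF)_w$ is a finite module over the semi-local ring $\cO_Z \otimes_{\cO_W} \cO_{W,w}$, whose closed points are $z_1,\ldots,z_r$ and in which $\fm_w$ generates an ideal with radical the Jacobson radical; hence $\operatorname{depth}_{\cO_{W,w}}\big((\pi_*\cF)_w\big) = \min_i \operatorname{depth}_{\cO_{Z,z_i}}(\cF_{z_i})$. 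Combining these, the pointwise depth criterion for $(S_2)$ on $Z$ holds if and only if the one for $\pi_*\cF$ on $W$ does.

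The step I expect to be the main obstacle is the converse in (i): converting the whole family of restriction isomorphisms into the single local statement $\operatorname{depth}_{\cO_{Z,z}}(\cF_z) \geq 2$. This rests on the compatibility of local cohomology with the colimit over shrinking neighborhoods of $z$ — so that the global $H^i_Y$ compute the local cohomology $H^i_{\fm_z}(\cF_z)$ of the stalk — together with the bookkeeping that $\overline{\{z\}}$ retains codimension $\geq 2$ after localizing. The remaining inputs (the depth/local-cohomology dictionary, exactness of $\pi_*$, and the behaviour of depth along an ideal contained in the Jacobson radical) are the standard facts that \cite{EGA} supplies.
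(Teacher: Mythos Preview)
Your proposal is correct and follows the same route as the paper, which simply cites \cite[Prop.~5.9.8, Prop.~5.10.14, Cor.~5.7.11]{EGA}: you have unpacked precisely the depth/local-cohomology dictionary that those references encode, together with the standard fact that depth is computed fibrewise along a finite morphism. The only point worth double-checking is the depth identity $\operatorname{depth}_{\cO_{W,w}}\big((\pi_*\cF)_w\big) = \min_i \operatorname{depth}_{\cO_{Z,z_i}}(\cF_{z_i})$, but this is exactly \cite[Cor.~5.7.11]{EGA}, so your argument and the paper's coincide.
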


\begin{proof}
(i) This is a consequence of
\cite[Prop.~5.9.8 and Prop.~5.10.14]{EGA}
(see also \cite[\S 1]{Kollar}).

(ii) This follows from
\cite[Cor.~5.7.11]{EGA} (see also
\cite[Lem.~18]{Kollar}).
\end{proof}

\noindent
\emph{Proof of Proposition \ref{prop:Gnormal}}.
By \cite[Thm.~4.12]{Br24}, $X$ is $G$-normal
if and only if $\cO_X$ satisfies ($S_2$) 
and every $G$-stable ideal is invertible in 
codimension $1$. Together with the above lemma,
it follows that (ii) holds for every $G$-normal 
variety $X$. Moreover, (i) holds by loc.~cit.,
Lem.~4.5, and (iii) follows from the fact that
every invertible ideal of a semi-local ring
is principal. 

Conversely, assume (i), (ii) and (iii). 
Then $\cO_X$ satisfies ($S_2$) by Lemma
\ref{lem:S2} again. So it suffices to check
that every $G$-stable ideal
is invertible in codimension $1$.
In other words, for any prime divisor $D$ 
on $X$, every $G$-stable ideal $I$ of 
$\cO_{X,G \cdot D}$ is principal.

If $D$ is free, we may replace $X$ with 
its dense open $G$-stable subset $X_{\fr}$, 
and hence assume that $G$ acts freely on $X$. 
Then by fpqc descent, the assignement 
$Z \mapsto Z \times_Y X$
yields a bijection between closed subschemes
of $Y$ and closed $G$-stable subschemes
of $X$. As a consequence, we have
$I = I^G \cO_X$. Moreover, 
$I^G$ is identified with an ideal of 
the discrete valuation ring $\cO_{Y,E}$
(where $E = \pi(D)$), and hence is invertible. 
This yields our assertion in this case.

If $D$ is nonfree, then we may choose a 
generator $f$ of the ideal $I_{G \cdot D}$. 
Then $I \subset f \cO_{X,G \cdot D}$, and hence 
$I = f J$ where $J$ is a $G$-stable ideal
of $\cO_{X,G \cdot D}$; in particular, 
$I \subsetneq J$. So we have an exact sequence
$0 \to J/I \to \cO_{X,G \cdot D}/I 
\to \cO_{X,G \cdot D}/J \to 0$
of $\cO_{X,G\cdot D}$-modules of finite length.
This implies our assertion by induction on 
the length of $\cO_{X,G\cdot D}/I$ 
(this length is minimal if and only if 
$I = I_{G \cdot D}$).

It remains to prove the equivalence 
(ii)$\Leftrightarrow$(ii)' when $G$ is 
diagonalizable. In view of the decomposition
(\ref{eqn:grading}), $\cA$ satisfies $(S_2)$ if
and only if each $\cA_{\lambda}$ satisfies
$(S_2)$. As $Y$ is normal and $\cA_{\lambda}$ 
has rank $1$, this amounts to $\cA_{\lambda}$
being divisorial (see 
e.g.~\cite[Cor.~1.4 and Prop.~1.6]{Hartshorne}).
\qed

\medskip

Combining Lemma \ref{lem:faithful} and  
Proposition \ref{prop:Gnormal},  we readily
obtain:

\begin{corollary}\label{cor:flat} 
Let $G$ be a finite diagonalizable group scheme
acting faithfully on a $G$-normal variety
with quotient morphism $\pi : X \to Y$.
Then $\pi$ is flat if and only if $\cA_{\lambda}$ 
is invertible for any $\lambda \in \Lambda$.
\end{corollary}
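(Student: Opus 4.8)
The plan is to convert flatness of the finite morphism $\pi$ into local freeness of $\cA = \pi_*(\cO_X)$ as a $\cB$-module, and then to read off the conclusion from the $\Lambda$-grading $\cA = \bigoplus_{\lambda \in \Lambda} \cA_\lambda$ of (\ref{eqn:grading}), viewed as a decomposition into $\cB$-submodules. The inputs I expect to use from the hypotheses are minimal: since $G$ acts faithfully on the $G$-normal variety $X$, Lemma \ref{lem:faithful}(i) (with $\Lambda(X) = \Lambda$) guarantees that every $\cA_\lambda$ is a torsion-free $\cB$-module of rank exactly $1$, and Proposition \ref{prop:Gnormal}(i) guarantees that $Y$ is normal; these ensure we are dealing with genuine divisorial rank-one sheaves, for which ``locally free of rank one'' and ``invertible'' coincide.

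First I would record the standard dictionary: as $\pi$ is finite and surjective over the integral scheme $Y$, it is flat if and only if $\cA$ is a locally free $\cB$-module (finite flat equals finite locally free over a noetherian base). This reduces the statement to an assertion about the $\cB$-module $\cA$.

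For the implication ``each $\cA_\lambda$ invertible $\Rightarrow$ $\pi$ flat'', I would simply note that a finite direct sum of invertible sheaves is locally free, of rank equal to the number of summands, namely $\vert \Lambda \vert = \vert G \vert$; hence $\cA$ is locally free and $\pi$ is flat. For the converse, suppose $\pi$ is flat, so $\cA$ is locally free. Because the grading expresses each $\cA_\lambda$ as a $\cB$-module direct summand of $\cA$, each $\cA_\lambda$ is finitely generated projective, hence locally free, over $\cB$; being of rank $1$ by Lemma \ref{lem:faithful}(i), it is invertible.

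The only step carrying real content is the converse, and it rests on two mild points rather than on any serious obstacle (which is why the corollary follows ``readily''): that the homogeneous components $\cA_\lambda$ are honest $\cB$-module direct summands of $\cA$, so that local freeness is inherited by each summand, and that a torsion-free rank-one sheaf that is locally free is exactly an invertible sheaf. I would take care to invoke Lemma \ref{lem:faithful}(i) precisely to pin the rank at $1$, and the normality of $Y$ from Proposition \ref{prop:Gnormal} to legitimize the identification of locally free rank-one sheaves with invertible ones.
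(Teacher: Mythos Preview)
Your proof is correct and is essentially the paper's intended argument: flatness of the finite morphism $\pi$ is equivalent to local freeness of $\cA = \bigoplus_{\lambda} \cA_\lambda$, and Lemma \ref{lem:faithful}(i) (with $\Lambda(X)=\Lambda$ from faithfulness) pins each summand at rank $1$, so that local freeness of each $\cA_\lambda$ is exactly invertibility. One small remark: you do not need the normality of $Y$ to identify a locally free rank-one sheaf with an invertible sheaf (that is the definition); the reference to Proposition \ref{prop:Gnormal} in the paper serves only to situate the corollary in the $G$-normal context, where the $\cA_\lambda$ are already known to be divisorial.
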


In particular,  $\pi$ is flat if $Y$ is regular.

\section{Local structure in codimension 1}
\label{sec:lcs}

Throughout this section, we consider 
a finite diagonalizable group $G$ and
a $G$-variety $X$ over a normal variety $Y$.
Let again $D$ be a prime divisor on $X$, 
with image $E$ in $Y$. Then $E$ is a prime 
divisor on the normal variety $Y$, and 
hence $B = \cO_{Y,E}$ 
is a discrete valuation ring. Moreover, 
the semi-local ring $A = \cO_{X,G \cdot D}$ 
is a finitely generated, torsion-free 
$B$-module, and hence is free of finite rank.
Also, $G$ acts on $A$, and $B = A^G$. 

Denote by $\fm$ the maximal ideal of $A$
corresponding to $D$; then $\fn = \fm \cap B$
is the maximal ideal of $B$. 
The residue field $A/\fm$ is the function 
field $k(D) = \kappa(x)$, where $x$ 
denotes the generic point of $D$. Likewise, 
$B/\fn = k(E) = \kappa(y)$ for the generic point
$y = \pi(x)$ of $D$; the extension 
$\kappa(x)/\kappa(y)$ is finite.
Let $\bar{A} = A/\fn A$; this is an artinian 
$\kappa(y)$-algebra, equipped with a action of
$G_{\kappa(y)}$.
We have $\bar{A} = \cO(X_y)$, where $X_y$ 
denotes the fibre of $\pi$ at $y$. 
Moreover, the orbit $G_{\kappa(y)} \cdot x$ 
is a subscheme of the finite $\kappa(y)$-scheme 
$X_y$ and both have the same underlying 
topological space. In particular, 
$G_{\kappa(y)} \cdot x$ only depends on the
point $y \in Y$. Also, this orbit may be 
identified with $G \cdot x$, as 
$\cO(G \cdot x)$ is the image of the
natural homomorphism
$\cO(X_y) \to \cO(G) \otimes_k \kappa(x)$,
and the right-hand side is identified with 
$\cO(G_{\kappa(y)}) \otimes_{\kappa(y)} \kappa(x)$.

We will obtain a description of the
$G$-algebra $\bar{A}$, or equivalently of
the $G$-scheme $X_y$, under the assumption
that $G$ is diagonalizable. Then $A$ is a
$\Lambda$-graded $B$-module:
\begin{equation}\label{eqn:gradedbis} 
A = \bigoplus_{\lambda \in \Lambda} 
A_{\lambda},
\end{equation}
where $A_0 = B$ and each $A_{\lambda}$ is 
a free $B$-module of rank $1$
(Lemma \ref{lem:faithful}). 
This yields a $\Lambda$-grading
\begin{equation}\label{eqn:gradedter} 
\bar{A} = \bigoplus_{\lambda \in \Lambda} 
\bar{A}_{\lambda}
\end{equation}
such that each $\bar{A}_{\lambda}$
is a one-dimensional $\kappa(y)$-vector space.
Let $I = I_{G \cdot D}$ be the largest 
$\Lambda$-graded ideal of $A$. Clearly, 
we have $\fn A \subset I$, and hence $I$ 
is the preimage of the largest 
$\Lambda$-graded ideal $\bar{I}$ of
$\bar{A}$ under the quotient map 
$A \to \bar{A}$. Moreover, there is an 
exact sequence of $G_{\kappa(y)}$-modules
\begin{equation}\label{eqn:orbit}
0 \longrightarrow \bar{I}
\longrightarrow \bar{A}
\longrightarrow \cO(G_{\kappa(y)} \cdot x)
\longrightarrow 0,
\end{equation}
and $\bar{I}$ is nilpotent.

\begin{proposition}\label{prop:fibre}
\begin{enumerate}
\item[{\rm (i)}]
There exists a unique subgroup scheme 
$H \subset G$ such that
$\bar{A} = \bar{A}^{H_{\kappa(y)}} \oplus \bar{I}$.
\item[{\rm (ii)}]
The morphism
$ X_y/H_{\kappa(y)} 
= \Spec(\bar{A}^{H_{\kappa(y)}}) 
\to \Spec(\kappa(y))$
is a torsor under $(G/H)_{\kappa(y)}$.
\item[{\rm (iii)}]
We have 
$H_{\kappa(y)} = C_{G_{\kappa(y)}}(x)$ and 
$\Spec(\bar{A}^{H_{\kappa(y)}}) 
\simeq G_{\kappa(y)} \cdot x$
as $(G/H)_{\kappa(y)}$-schemes.
\end{enumerate}
\end{proposition}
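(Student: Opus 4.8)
The plan is to exploit the rigidity coming from the fact that each graded piece $\bar{A}_\lambda$ is one-dimensional over $\kappa(y)$ (by \eqref{eqn:gradedter}). Choosing a generator $e_\lambda$ of $\bar{A}_\lambda$ with $e_0 = 1$, the multiplication of $\bar{A}$ is encoded by structure constants $c_{\lambda,\mu}\in\kappa(y)$ via $e_\lambda e_\mu = c_{\lambda,\mu}\,e_{\lambda+\mu}$. The first key observation I would establish is a dichotomy: \emph{every $e_\lambda$ is either a unit or nilpotent.} Indeed, if $d$ denotes the order of $\lambda$ in $\Lambda$, then $d\lambda = 0$ and $e_\lambda^d = c\cdot 1$ for an explicit product $c$ of structure constants; hence $e_\lambda$ is a unit when $c\neq 0$ and nilpotent when $c=0$. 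I would then set $\Lambda_0 = \{\lambda\in\Lambda : e_\lambda \text{ is a unit}\}$ and check that it is a subgroup of $\Lambda$: it contains $0$; it is closed under addition, since a product of units is a unit, so $c_{\lambda,\mu}\neq 0$ and $e_{\lambda+\mu}$ is a unit for $\lambda,\mu\in\Lambda_0$; and it is closed under negation, since when $e_\lambda^d$ is a nonzero scalar the element $e_{-\lambda}$ is a nonzero scalar multiple of the unit $e_\lambda^{d-1}$.

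Next I would identify $\bar{I}$ and define $H$. Because $\bar{A}_\lambda$ is one-dimensional, every graded ideal has the form $\bigoplus_{\lambda\in S}\bar{A}_\lambda$ for a subset $S\subset\Lambda$, and such a sum is a proper (equivalently nilpotent) ideal precisely when $S\cap\Lambda_0=\emptyset$: a proper ideal cannot contain a unit $e_\lambda$, while conversely $\bigoplus_{\lambda\notin\Lambda_0}\bar{A}_\lambda$ is an ideal because the product of a nilpotent $e_\lambda$ with any $e_\mu$ is again nilpotent, hence homogeneous of weight outside $\Lambda_0$. Therefore the largest graded ideal is $\bar{I} = \bigoplus_{\lambda\notin\Lambda_0}\bar{A}_\lambda$. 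I then define $H\subset G$ to be the unique subgroup scheme with $\Lambda^H = \Lambda(G/H) = \Lambda_0$, using the bijection between subgroup schemes of $G$ and subgroups of $\Lambda$. Since $\bar{A}^{H_{\kappa(y)}} = \bigoplus_{\lambda\in\Lambda^H}\bar{A}_\lambda = \bigoplus_{\lambda\in\Lambda_0}\bar{A}_\lambda$, the decomposition $\bar{A} = \bar{A}^{H_{\kappa(y)}}\oplus\bar{I}$ of part (i) follows; uniqueness of $H$ is forced, since any $H'$ with the same property would satisfy $\Lambda^{H'}=\Lambda_0$ by comparing weight spaces.

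For part (ii), I would apply the torsor criterion of Lemma~\ref{lem:torsor} to the $\Lambda_0$-graded $\kappa(y)$-algebra $\bar{A}^{H_{\kappa(y)}}$, which carries the residual action of $(G/H)_{\kappa(y)}$: for $\lambda,\mu\in\Lambda_0$ the multiplication map sends $e_\lambda\otimes e_\mu$ to $c_{\lambda,\mu}\,e_{\lambda+\mu}$ with $c_{\lambda,\mu}\neq0$, hence is an isomorphism, and the proof of Lemma~\ref{lem:torsor} (the converse implication) applies verbatim to this affine $(G/H)_{\kappa(y)}$-scheme over $\Spec(\kappa(y))$. For part (iii), the exact sequence \eqref{eqn:orbit} identifies $\cO(G_{\kappa(y)}\cdot x)$ with $\bar{A}/\bar{I}$, and the projection along $\bar I$ from (i) gives a grading-preserving, hence $(G/H)_{\kappa(y)}$-equivariant, isomorphism $\bar{A}/\bar{I}\xrightarrow{\sim}\bar{A}^{H_{\kappa(y)}}$; this yields $G_{\kappa(y)}\cdot x\simeq\Spec(\bar{A}^{H_{\kappa(y)}})$ as $(G/H)_{\kappa(y)}$-schemes. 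Finally, since $G_{\kappa(y)}\cdot x$ is thereby a $(G/H)_{\kappa(y)}$-torsor and $(G/H)_{\kappa(y)}$ acts freely, the stabilizer of $x$ in $G_{\kappa(y)}$ is exactly the kernel $H_{\kappa(y)}$ of $G_{\kappa(y)}\to(G/H)_{\kappa(y)}$; combined with the equality $C_{G_{\kappa(y)}}(x)=\Stab_{G}(x)$ recalled in Section~\ref{sec:fgsa}, this gives $H_{\kappa(y)}=C_{G_{\kappa(y)}}(x)$.

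I expect the main obstacle to lie not in any single step but in setting up the bookkeeping cleanly: the decisive leverage is the unit/nilpotent dichotomy for the homogeneous generators, which is what makes $\Lambda_0$ a subgroup and pins down $\bar{I}$; once this is in place, the torsor criterion and the orbit identification are formal. The one point deserving explicit care is that Lemma~\ref{lem:torsor} is stated for $G$-varieties, whereas $\Spec(\bar{A}^{H_{\kappa(y)}})$ need not be integral (or even reduced), so I would emphasize that the relevant direction of its proof uses only the multiplication maps and applies to arbitrary affine $G$-schemes over a field.
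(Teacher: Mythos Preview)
Your proof is correct and follows essentially the same approach as the paper: both arguments hinge on the dichotomy that each one-dimensional graded piece $\bar{A}_\lambda$ consists either of units or of nilpotents, use this to define the subgroup $\Lambda_0 = \Lambda^H$, and then invoke Lemma~\ref{lem:torsor} for (ii) and the exact sequence \eqref{eqn:orbit} for (iii). The only differences are stylistic---you work with explicit generators $e_\lambda$ and structure constants, whereas the paper phrases the dichotomy via the graded ideal $\bar{A}_\lambda\bar{A}$; and for (iii) you conclude $H_{\kappa(y)}=C_{G_{\kappa(y)}}(x)$ directly from freeness of the torsor action, while the paper records the two inclusions separately. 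Your remark about Lemma~\ref{lem:torsor} is slightly off: in the paper it is stated for $G$-schemes, not only $G$-varieties, so no extra justification is needed there.
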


\begin{remark}\label{rem:fibre}
The subgroup scheme $H$ only depends
on $y \in Y$, or equivalently on the prime
divisor $E$; we will denote it by $H(E)$ 
or by $H(D)$ according to the setup.

If the restriction $\pi\vert_D : D \to E$
is birational,  then $\kappa(y) = \kappa(x)$ and
hence $H_{\kappa(x)}$ is just the stabilizer
$\Stab_G(x)$. In the general case, 
as the above proposition is rather technical,
we illustrate it with two examples before
giving the proof.

Let $G = \bfmu_{p^2}$  so that 
$\Lambda =\bZ/p^2\bZ$. 
Consider the polynomial ring
$k[T_0,T_1,T_2]$ equipped with
the $\Lambda$-grading such that
$T_0, T_1,T_2$ are homogeneous
of respective weights $0,1,p$. Let 
\[ X = \Spec \big( k[T_0,T_1,T_2]/
(T_1^p - T_0 T_2) \big).  \]
This is a normal affine surface
containing the origin as its unique
nonsmooth point.  Moreover,
$G$ acts on $X$ (since 
$T_1^p - T_0 T_2$ is homogeneous
of weight $p$) with quotient
\[ Y = \Spec(k[T_0,T_2^p])
\simeq \bA^2.  \]
The zero subscheme of $(T_0,T_1)$
in $X$ is a $G$-stable nonfree divisor $D$,  
and its image $E$ in $Y$ is the zero 
subscheme of $T_0$. With the above 
notation, we obtain
\[ \kappa(x) = k(T_2) \supset k(T_2^p) = \kappa(y),
\quad \bar{A} = k(T_2)[T_1]/(T_1^p),  \]
so that $H = \bfmu_p$. Note that 
$G_{\kappa(y)} \cdot x = x$ but $x$ is 
not fixed by $G_{\kappa(y)}$ (or equivalently,
by $G$).

Next, let $X' \subset X$ be the zero subscheme 
of $T_2^p - T_0 - 1 \in \cO(X)^ G = \cO(Y)$.  
Then $X'$ is a $G$-stable curve containing 
$x' = (0,0,1)$ as its unique nonsmooth 
point. Moreover, $X'$ has quotient 
$Y' = \Spec(k[T_0]) \simeq \bA^1$, and
$x'$ has image $y' = 0$.
We obtain with an obvious notation
\[ \kappa(x') = \kappa(y'),  \quad 
\bar{A}' = k[T_1,T_2]/(T_1^p,T_2^p - 1),  \]
so that $H' = \bfmu_p$ again; also,
$x'$ is neither free nor $G$-stable. 
One may check that $X'$ is $G$-normal.

The latter example will be generalized 
in Section \ref{sec:curves} to all 
$G$-normal curves, where $G$ is 
diagonalizable and infinitesimal. 
\end{remark}

\medskip \noindent
\emph{Proof of Proposition \ref{prop:fibre}}.
(i) For any $\lambda \in \Lambda$, we have 
either $\bar{A}_{\lambda} \subset \bar{I}$, 
or every nonzero element of $\bar{A}_{\lambda}$
is invertible. (Indeed, the ideal
$\bar{A}_{\lambda} \bar{A}$ is $\Lambda$-graded,
and hence is either contained in $\bar{I}$
or equal to $\bar{A}$). Also, note that every 
nonzero element of $\bar{A}_{\lambda}$ is 
invertible if and only if $\bar{A}_{\lambda}$
meets $\bar{A}^{\times}$. As a consequence, 
the characters satisfying this condition
form a subgroup of $\Lambda$.
Thus,  there exists a unique subgroup scheme 
$H$ of $G$ such that
$\bar{A}_{\lambda}$ meets $\bar{A}^{\times}$
if and only if 
$\bar{A}_{\lambda} \subset \bar{A}^{H_{\kappa(y)}}$.
In view of (\ref{eqn:gradedter}), this yields
$\bar{A} = \bar{A}^{H_{\kappa(y)}} \oplus \bar{I}$.

(ii) This follows from the definition of $H$ 
in view of Lemma \ref{lem:torsor}.

(iii) By (i) together with the exact sequence
(\ref{eqn:orbit}), we obtain 
a $G_{\kappa(y)}$-equivariant isomorphism 
$\bar{A}^{H_{\kappa(y)}} 
\stackrel{\sim}{\longrightarrow}
\cO(G_{\kappa(y)} \cdot x)$.
As a consequence,  $H_{\kappa(y)}$ acts trivially 
on $G_{\kappa(y)} \cdot x$.  In particular, 
$H_{\kappa(y)} \subset C_{G_{\kappa(y)}}(x)$.
On the other hand, 
$C_{G_{\kappa(y)}}(x)$ fixes the image
of $x$ in $\Spec(\bar{A}^{H_{\kappa(y)}})$. 
In view of (ii), this gives the opposite inclusion
$C_{G_{\kappa(y)}}(x) \subset H_{\kappa(y)}$.
\qed

We may reformulate the above proposition
in geometric terms by considering
the factorization of $\pi : X \to Y$ as
\[ 
X \stackrel{\varphi}{\longrightarrow}
Z = X/H  \stackrel{\psi}{\longrightarrow}
X/G = Y,
\]
where $\varphi$ (resp.~$\psi$) is the
quotient by $H$ (resp.~$G/H$). Then 
$\varphi$ yields a morphism between 
fibres $\varphi_y : X_y \to Z_y$. 
Moreover, $Z_y = X_y/H_{\kappa(y)}$ by Lemma
\ref{lem:sub} together with the linear 
reductivity of $H$.

\begin{corollary}\label{cor:factorization}

\begin{enumerate}
\item[{\rm (i)}] The restriction of
$\varphi$ to $D$ is birational to its image.
\item[{\rm (ii)}] The morphism
$\psi$ is a $G/H$-torsor in a neighborhood 
of $y$. 
\item[{\rm (iii)}] The fixed point subscheme 
$X_y^{H_{\kappa(y)}}$ yields a section of $\varphi_y$.
\item[{\rm (iv)}] The divisor $D$ is free 
if and only if $H$ is trivial.
\end{enumerate}
\end{corollary}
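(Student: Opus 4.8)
The plan is to read everything off the eigenspace decompositions of Proposition \ref{prop:fibre} together with their graded refinement. I write $M = \{\lambda \in \Lambda \mid \lambda\vert_H = 0\} = \Lambda(G/H) \subset \Lambda$, so that the proof of Proposition \ref{prop:fibre}(i) gives $\bar{A}^{H_{\kappa(y)}} = \bigoplus_{\lambda \in M} \bar{A}_\lambda$ and $\bar{I} = \bigoplus_{\lambda \notin M} \bar{A}_\lambda$. Lifting through (\ref{eqn:gradedbis}), I fix a $B$-basis $t_\lambda$ of each rank-one module $A_\lambda$. For $\lambda \in M$ the reduction $\bar{t}_\lambda$ is a unit of $\bar{A}$, hence $t_\lambda \in A^\times$, since $\fn A$ lies in the Jacobson radical of the finite $B$-algebra $A$; for $\lambda \notin M$ one has $A_\lambda \subset I_{G \cdot D} \subset \fm$. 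These two facts drive all four parts, and I would treat (iii) and (i) first, as the most direct.

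For (iii), the fixed-point subscheme of the diagonalizable group $H_{\kappa(y)}$ acting on $\Spec(\bar{A})$ (graded by the character group $\Lambda/M$ of $H$) is cut out by the ideal generated by the positive-degree part, namely $\bar{I}$; thus $X_y^{H_{\kappa(y)}} = \Spec(\bar{A}/\bar{I})$. The morphism $\varphi_y$ corresponds to $\bar{A}^{H_{\kappa(y)}} \hookrightarrow \bar{A}$, so its restriction to the fixed locus is the composite $\bar{A}^{H_{\kappa(y)}} \hookrightarrow \bar{A} \twoheadrightarrow \bar{A}/\bar{I}$, which is an isomorphism precisely because $\bar{A} = \bar{A}^{H_{\kappa(y)}} \oplus \bar{I}$; inverting it produces the section. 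For (i), I would examine the residue-field map $\kappa(\varphi(x)) = A^H/(\fm \cap A^H) \hookrightarrow A/\fm = \kappa(x)$ (here $\fm \cap A^H$ is maximal, as $A$ is integral over $A^H$, so $\varphi(x)$ is indeed the generic point of the divisor $\varphi(D)$). Writing $a = \sum_\lambda a_\lambda \in A$, the components with $\lambda \notin M$ lie in $\fm$ and so vanish in $\kappa(x)$; hence the images of $A$ and of $A^H = \bigoplus_{\lambda \in M} A_\lambda$ in $\kappa(x)$ agree. Since $A \to \kappa(x)$ is onto, so is $A^H \to \kappa(x)$, forcing $\kappa(\varphi(x)) \xrightarrow{\sim} \kappa(x)$; as $\varphi\vert_D$ is finite onto a divisor of the same dimension, this is exactly birationality.

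For (ii), the morphism $\psi$ is the quotient by $G/H$, and $\psi_*\cO_Z = \cA^H$ has eigenspaces $\cA_\lambda$ for $\lambda \in M$. At the stalk $B = \cO_{Y,E}$ each $A_\lambda$ ($\lambda \in M$) is generated by the unit $t_\lambda$, so every map $\mult_{\lambda,\mu}\colon A_\lambda \otimes_B A_\mu \to A_{\lambda+\mu}$ carries $t_\lambda \otimes t_\mu$ to a generator of $A_{\lambda+\mu}$ and is an isomorphism. Each $\mult_{\lambda,\mu}$ is a morphism of coherent sheaves on $Y$, so being an isomorphism at the generic point of $E$ it is one on a neighborhood of $y$; intersecting the finitely many such neighborhoods (for $\lambda,\mu \in M$) and invoking Lemma \ref{lem:torsor} for $G/H$ shows that $\psi$ is a $G/H$-torsor there.

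Finally, for (iv), I would use Proposition \ref{prop:fibre}(iii): base-changing the equality $H_{\kappa(y)} = C_{G_{\kappa(y)}}(x)$ along $\kappa(x)/\kappa(y)$ gives $H_{\kappa(x)} = C_{G_{\kappa(x)}}(x) = \Stab_G(x)$. Hence $H$ is trivial if and only if $\Stab_G(x)$ is, if and only if $G$ acts freely at the generic point $x$ of $D$, i.e. $D$ meets $X_{\fr}$ and is free. I expect the only real friction to be the spreading-out step in (ii)—passing from the isomorphism of multiplication maps at the semi-local ring to a torsor over an honest Zariski neighborhood of $y$—together with the Nakayama argument certifying that the $t_\lambda$ with $\lambda \in M$ are genuine units; the remaining identifications are formal consequences of the splitting in Proposition \ref{prop:fibre}.
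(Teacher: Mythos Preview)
Your argument is correct, and for (i) and (iii) it is essentially the paper's proof: both exploit the splitting $\bar{A} = \bar{A}^{H_{\kappa(y)}} \oplus \bar{I}$ to identify residue fields and to produce the retraction $\bar{A} \twoheadrightarrow \bar{A}/\bar{I} \simeq \bar{A}^{H_{\kappa(y)}}$.

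For (ii) and (iv) you take a slightly different route. In (ii) you verify Lemma~\ref{lem:torsor} by hand, lifting units $t_\lambda$ and spreading out the isomorphisms $\mult_{\lambda,\mu}$; the paper instead observes that Proposition~\ref{prop:fibre}(ii) places the fiber $Z_y$ inside the free locus of the $G/H$-variety $Z$, and then invokes openness of that locus. Your approach is more explicit, the paper's is shorter. In (iv) the paper avoids stabilizers entirely: if $H$ is trivial then $X_y \to \Spec(\kappa(y))$ is a $G_{\kappa(y)}$-torsor, so $x \in X_{\fr}$; if $H$ is nontrivial then part (iii) exhibits an $H$-fixed point in $X_y$, so $D$ is nonfree. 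Your route via $H_{\kappa(x)} = \Stab_G(x)$ reaches the same conclusion, but the ``base-changing'' step deserves one more line: what you really use is that $H$ acts trivially on the orbit $G_{\kappa(y)}\cdot x$ (giving $H_{\kappa(x)} \subset \Stab_G(x)$) while $G/H$ acts freely on it (giving the reverse inclusion); a naive base change of the centralizer of a non-rational point does not automatically yield the stabilizer of the associated rational point.
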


\begin{proof}
(i) The decomposition 
$\bar{A} = \bar{A}^H \oplus \bar{I}$
induces a decomposition
$\bar{\fm} = \bar{\fm}^H \oplus \bar{I}$,
so that
$\kappa(x) = A/\fm = \bar{A}/\bar{\fm}
= \bar{A}^H/\bar{\fm}^H 
= A^H/\fm^H,$
where the latter equality follows from
the inclusion $\fn A^H \subset \fm^H$.
This yields our assertion.

(ii) This follows from Proposition
\ref{prop:fibre} together with the openness of
the free locus of the $G/H$-variety $Z$. 

(iii) We identify the algebra homomorphism 
$\varphi_y^{\#} : \cO(Z_y) \to \cO(X_y)$
with the inclusion 
$\bar{A}^{H_{\kappa(y)}} \subset \bar{A}$.
The latter inclusion has a retraction
$\bar{A} \to \bar{A}/\bar{I} 
= \bar{A}^{H_{\kappa(y)}}$,
where $\bar{I}$ is the largest 
$H_{\kappa(y)}$-stable ideal of $\bar{A}$ 
such that $H_{\kappa(y)}$ acts 
trivially on $\bar{A}/\bar{I}$, i.e., 
the ideal of $X_y^{H_{\kappa(y)}}$.

(iv) If $H$ is trivial, then 
$X_y \to \Spec(\kappa(y))$
is a $G_{\kappa(y)}$-torsor, and hence 
$x \in X_{\fr}$. Conversely, if $H$ is 
nontrivial then $D$ is nonfree by (iii).
\end{proof}

For a $G$-normal variety $X$, the above 
description of $\bar{A} = \cO(X_y)$ 
lifts to a description of 
$A = \cO_{X, G \cdot x}$:

\begin{proposition}\label{prop:local}
Assume that $X$ is $G$-normal. 

\begin{enumerate}
\item[{\rm (i)}]
The largest $\Lambda$-graded ideal $I$ of 
$A$ is generated by a homogeneous element 
$f$ of weight $\nu \in \Lambda$ such that 
the restriction $\nu \vert_H$ generates 
$\Lambda(H)$. Moreover, $\nu \vert_H$ 
is uniquely determined by $I$.
\item[{\rm (ii)}]
We have $H \simeq \bfmu_n$ for some $n \geq 1$,
and $A = \bigoplus_{m = 0}^{n -1} A^H f^m
\simeq A^H[T]/(T^n - g)$,
where $g = f^n$ satisfies $g \in A^H$ and 
$g A = \fn A$.
\item[{\rm (iii)}] 
We have the equality of $H$-fixed subschemes
$X_y^H = G_{\kappa(y)} \cdot x$,
and the inclusion
$G \cdot D \subset X^H$ with equality
in a $G$-stable neighborhood of $x$.
\end{enumerate}
\end{proposition}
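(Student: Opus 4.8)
The plan is to lift the fibrewise decomposition of Proposition~\ref{prop:fibre} to the semi-local ring $A = \cO_{X,G\cdot D}$, using $G$-normality crucially. I would first establish part~(i). By Proposition~\ref{prop:Gnormal}(iii), since $D$ is nonfree (equivalently, $H$ is nontrivial by Corollary~\ref{cor:factorization}(iv)), the largest $\Lambda$-graded ideal $I = I_{G\cdot D}$ is principal; as $I$ is homogeneous, its generator may be taken homogeneous of some weight $\nu$. To identify $\nu\vert_H$, I would reduce modulo $\fn A$: the image $\bar{I}$ is the largest graded ideal of $\bar{A}$, and by the proof of Proposition~\ref{prop:fibre}(i) a homogeneous component $\bar{A}_\lambda$ lies in $\bar{I}$ precisely when $\lambda\vert_H \neq 0$ in $\Lambda(H) = \Lambda^H$, i.e.\ when $\lambda\notin\Lambda^H$. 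Since $\bar I$ is generated by the image of $f$ together with $\bar{A}^{H_{\kappa(y)}}\cdot\bar I$, and since $\bar I$ is exactly the sum of the $\bar{A}_\lambda$ with $\lambda\vert_H$ nonzero, the weight $\nu\vert_H$ must be a generator of the cyclic character group $\Lambda(H)$; uniqueness of $\nu\vert_H$ follows because $I$ determines the set $\{\lambda : \bar A_\lambda\subset\bar I\}$ and hence the coset structure that pins down the generator.

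For part~(ii), the fact that $\Lambda(H)$ is generated by a single element $\nu\vert_H$ is exactly the statement that $H$ is cyclic, so $H\simeq\bfmu_n$ with $n = |H| = |\Lambda(H)|$. The structural claim then follows from the grading of $A$ over $A^H$. Write $\nu$ for the weight of $f$; since $\nu\vert_H$ generates $\bZ/n\bZ$, the characters $0,\nu,2\nu,\dots,(n-1)\nu$ represent the $n$ distinct cosets of $\Lambda^H$ in $\Lambda$. Because each $A_\lambda$ is a free $A^H$-module of rank one (here I use Lemma~\ref{lem:faithful}(i) applied to the $H$-action, giving $A_\lambda = A^H f^{m}$ when $\lambda\equiv m\nu \bmod \Lambda^H$), multiplication by $f^m$ furnishes an isomorphism $A^H \cong A_{m\nu}/(\text{lower gradings})$, and summing gives $A = \bigoplus_{m=0}^{n-1} A^H f^m$. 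Setting $g = f^n\in A_{n\nu} = A_0 = A^H$, the element $T\mapsto f$ defines the surjection $A^H[T]/(T^n - g)\to A$, which is an isomorphism by comparing $A^H$-ranks (both are free of rank $n$). Finally $g A = (f^n)A$; but $f$ generates $I = I_{G\cdot D}$ whose preimage relation gives $f^n A^H = \fn A^H$ after passing to the degree-zero part, and since $fA = I$ reduces modulo $\fn A$ to the nilpotent $\bar I$ with $\bar I^n = 0$, one gets $g A = \fn A$.

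For part~(iii), I would combine Proposition~\ref{prop:fibre}(iii), which gives $X_y^{H_{\kappa(y)}} = \Spec(\bar A^{H_{\kappa(y)}}) \simeq G_{\kappa(y)}\cdot x$ as $(G/H)_{\kappa(y)}$-schemes, so the equality $X_y^H = G_{\kappa(y)}\cdot x$ is immediate upon identifying the $H$-fixed subscheme of the fibre with $\Spec(\bar A^{H_{\kappa(y)}})$. For the inclusion $G\cdot D\subset X^H$, note that $X^H$ is the closed $G$-stable subscheme where $H$ acts trivially; since $H$ fixes $x$ (indeed $H_{\kappa(y)} = C_{G_{\kappa(y)}}(x)$ by Proposition~\ref{prop:fibre}(iii)), the orbit $G\cdot x = G\cdot D$ (as topological spaces, with $G\cdot D$ the smallest $G$-stable subscheme containing $D$) lands in $X^H$. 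The equality in a $G$-stable neighborhood of $x$ is then a local statement: using the presentation $A\simeq A^H[T]/(T^n-g)$, the ideal cutting out $X^H$ is generated by $f = \bar T$ locally, matching the ideal $I = fA$ of $G\cdot D$, so the two subschemes coincide after shrinking via Lemma~\ref{lem:sub}(ii).

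The main obstacle I anticipate is part~(ii): upgrading the fibrewise isomorphism over $\kappa(y)$ to the integral statement $A\simeq A^H[T]/(T^n-g)$ requires genuinely using that $A$ is a free $B$-module (not merely that $\bar A$ has the stated graded structure), and requires carefully checking that the homogeneous generator $f$ of $I$ can be chosen so that its powers $f^0,\dots,f^{n-1}$ form an $A^H$-basis of $A$, rather than just spanning the associated graded. The freeness of each $A_\lambda$ from Lemma~\ref{lem:faithful}(i) together with the compatibility of the $\Lambda$-grading with the $\Lambda^H$-grading is what makes this work, but verifying that $g = f^n$ generates precisely $\fn A^H$ (and not a strictly larger or smaller ideal) is the delicate point where $G$-normality, via the principality of $I_{G\cdot D}$, is indispensable.
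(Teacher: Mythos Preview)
Your overall strategy matches the paper's, but two steps are genuine gaps rather than routine omissions.

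First, in (i) you assert that since $I$ is graded and principal, ``its generator may be taken homogeneous of some weight $\nu$''. This does not follow formally: $A$ is only semi-local, and for a grading by a finite abelian group there is no lowest-degree component to single out. The paper proves this explicitly. Writing a generator as $f = \sum_\lambda f_\lambda$, one has $f \notin \fm I$ (where $\fm$ is the maximal ideal at $x$), so some $f_\nu \notin \fm I$. If $Af_\nu \subsetneq I$, then the annihilator $J$ of the graded module $I/Af_\nu$ is a proper graded ideal, hence $J \subset I$; but $I_\fm = (Af_\nu)_\fm$ forces $J_\fm = A_\fm$, contradicting $J \subset I \subset \fm$. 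Thus $I = Af_\nu$.

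Second---and this is exactly the obstacle you anticipate---your argument for $gA = \fn A$ yields only the inclusion $gA \subset \fn A$ (from $\bar f^{\,n} = 0$). The ``preimage relation'' you invoke does not give the reverse inclusion. The paper closes this by a dimension count: from $A = \bigoplus_{m=0}^{n-1} A^H f^m$ one gets $A/fA \simeq A^H/gA^H$, while $A/fA = A/I \simeq \bar A/\bar I \simeq \bar A^{H_{\kappa(y)}} \simeq A^H/\fn A^H$. Hence $\dim_{\kappa(y)} A^H/gA^H = \dim_{\kappa(y)} A^H/\fn A^H$, and together with $gA^H \subset \fn A^H$ this forces equality.

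Two smaller corrections. The line ``$g = f^n \in A_{n\nu} = A_0 = A^H$'' is wrong on both counts: in general $n\nu \neq 0$ in $\Lambda$ (only $n\nu \in \Lambda^H$), and $A_0 = B \subsetneq A^H$ unless $H = G$. What you need, and what holds, is $g \in A_{n\nu} \subset A^H$. Also, your appeal to Lemma~\ref{lem:faithful}(i) to get ``$A_\lambda$ free of rank one over $A^H$'' is misapplied: that lemma gives freeness of each $A_\lambda$ over $B = A_0$, not over $A^H$. The paper instead lifts the fibre identity $\bar A_{\lambda + m\nu} = \bar A_\lambda \bar f^{\,m}$ to $A_{\lambda + m\nu} = A_\lambda f^m$ by Nakayama's lemma (each $A_\mu$ being free of rank one over the local ring $B$), and then sums over $\lambda \in \Lambda^H$ and $0 \le m \le n-1$ to obtain the direct-sum decomposition.
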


\begin{proof}
We show (i) and (ii) simultaneously. 
The largest $\Lambda$-graded ideal $I$ is 
principal (Proposition \ref{prop:Gnormal}). 
Let $f \in A$ such that $I = A f$ and write 
$f = \sum_{\lambda} f_{\lambda}$, where
$f_{\lambda} \in I_{\lambda}$ for all 
$\lambda$. We claim that $I = A f_{\nu}$ 
for some $\nu$.

Indeed, $f \notin \fm I$, where $\fm$ 
denotes the maximal ideal of $x$ in $A$. 
So there exists $\nu$ such that 
$f_{\nu} \notin \fm I$. We have
$A f_{\nu} \subset I$; if this inclusion
is strict, then the annihilator $J$
of the $\Lambda$-graded $A$-module 
$I/Af_{\nu}$ is a proper $\Lambda$-graded 
ideal, and hence is contained in $I$. 
But we have the equality of localizations
$I_{\fm} = (A f_{\nu})_{\fm}$ and hence
$J_{\fm} = A_{\fm}$, which contradicts the
fact that $I_{\fm} = \fm_{\fm} \neq A_{\fm}$.
This proves our claim.

Let $f = f_{\nu}$ as in that claim
and let $g = f^n$, where $n = \vert H \vert$.
Then $g \in A^H$, since the character group 
$\Lambda(H)$ is killed by $n$. Thus, the 
image $\bar{f}$ of $f$ in $\bar{A}$ 
satisfies $\bar{f}^n \in \bar{A}^{H_{\kappa(y)}}$.
But $\bar{f} \in \bar{I}$ and 
$\bar{A} = \bar{A}^{H_{\kappa(y)}} \oplus \bar{I}$, 
so that $\bar{f}^n = 0$; equivalently,
$g \in \fn A$. Moreover, Proposition
\ref{prop:fibre} yields the equalities
$\bar{A} = \bar{A}^{H_{\kappa(y)}} \oplus \bar{I} 
= \bar{A}^{H_{\kappa(y)}} \oplus \bar{f} \bar{A}$
and hence 
\[
\bar{A} = \bar{A}^{H_{\kappa(y)}} +  \bar{f} \bar{A} 
= \bar{A}^{H_{\kappa(y)}} 
+ \bar{f} \bar{A}^{H _{\kappa(y)}}
+ \bar{f}^2 \bar{A} = \cdots = 
\sum_{m=0}^{n -1} 
\bar{f}^m \bar{A}^{H_{\kappa(y)}}.
\]
Taking weights, we obtain 
$\Lambda = \Lambda(G/H) + \bZ \nu$,
where $\Lambda(G/H) = \Lambda^H$. Also,
recall that $\Lambda(H) = \Lambda/\Lambda^H$; 
thus, this group is generated by $\nu \vert_H$.
As a consequence, $\nu \vert_H$ has order
$n$, and hence $H \simeq \bfmu_n$;
moreover,  $\Lambda$
is the disjoint union of the translates
$\Lambda^H + m \nu$ where 
$m = 0,\ldots, n -1$.
Also,  $\bar{f}$ is uniquely determined
up to multiplication by a homogeneous
unit of $\bar{A}$, so that $\nu$
is unique up to translation by $\Lambda^H$;
equivalently, $\nu \vert_H$ is unique.

For any $\lambda \in \Lambda^H$ and 
$m = 0,\ldots, n -1$,
we have 
$\bar{A}_{\lambda + m \nu} = 
\bar{A}_{\lambda} \bar{f}^m$,
and hence 
$A_{\lambda + m \nu} = A_{\lambda} f^m$
by Nakayama's lemma. As a consequence,
$A = \sum_{m=0}^{n -1}
A^H f^m$. 
Moreover, this sum is direct in view of
the above decomposition of $\Lambda$
into cosets of $\Lambda^H$.

It remains to show that $g A = \fn A$, 
or equivalently, $g A^H = \fn A^H$.
Clearly, we have $g A^H \subset \fn A^H$. 
Thus, it suffices to check that
$\dim_{\kappa(y)} A^H/gA^H = 
\dim_{\kappa(y)} A^H/\fn A^H$.
But we have isomorphisms of $\kappa(y)$-algebras
\[ A^H/\fn A^H \simeq \bar{A}^{H_{\kappa(y)}}
\simeq \bar{A}/\bar{I} \simeq A/I = 
A/ f A \simeq A^H/g A^H, \]
where the latter isomorphism follows from
the decomposition of $A$ as an $A^H$-module. 
This yields the desired equality of 
dimensions.

(iii) Note that $X^H$ is $G$-stable, 
and contains $D$ by Proposition
\ref{prop:fibre}; therefore, 
$G \cdot D \subset X^H$. 
Denote by $\cJ \subset \cO_X$ the
ideal sheaf of $X^H$, and by $J \subset A$ 
the corresponding ideal. 
Then $f \in J$, as it is an $H$-eigenvector 
of nonzero weight. Thus, we have
$I(G \cdot D) \subset J$; 
this implies our assertions.
\end{proof}

\begin{remark}\label{rem:sec}
Given $\lambda \in \Lambda$, 
consider the multiplication map
\[ \sigma_{\lambda,\vert G \vert} : 
A_{\lambda}^{\otimes \vert G \vert} 
\longrightarrow A_{ \vert G \vert \lambda} 
= A_0 = B \]
as in (\ref{eqn:sec}). With the notation of
Proposition \ref{prop:local}, write 
$\lambda = \mu + m \nu$, where 
$\mu \in \Lambda^H$ and $0 \leq m \leq n-1$.
Then the image of 
$\sigma_{\lambda, \vert G \vert}$
is the ideal of $B$ generated by 
$f^{\vert G \vert m} = g^{[G:H] m}$.

Indeed, we have $A_{\lambda} = A_{\mu} f^m$
by the above proposition, and the 
multiplication map 
$\sigma_{\mu,\vert G \vert}: 
A_{\mu}^{\otimes \vert G \vert} \to B$
is an isomorphism in view of Remark
\ref{rem:torsor} and Corollary 
\ref{cor:factorization}.
\end{remark}

\section{Local structure in codimension 1
(continued)}
\label{sec:lcsc}

We still consider a finite diagonalizable
group scheme $G$ and a $G$-normal variety $X$
over $Y$ (a normal variety).
To describe the local structure of $X$ in 
codimension $1$, we may freely replace it by 
any big open subset $U$ which is $G$-stable
and satisfies some additional assumptions.  
Here are those we will need at this stage.

\begin{lemma}\label{lem:open}
There exists a largest open subset 
$U \subset X$ satisfying the following 
properties:

\begin{enumerate}

\item[{\rm (i)}] The quotient $U/G$ is regular.

\item[{\rm (ii)}] The nonfree locus 
$U \setminus U_{\fr}$ is a disjoint union of
prime divisors $D$.

\item[{\rm (iii)}] For any such $D$, the ideal 
$\cI_{G \cdot D}$ is invertible.

\end{enumerate}

Moerover, $U$ is $G$-stable and 
big in $X$.
\end{lemma}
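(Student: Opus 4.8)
The plan is to establish the existence of a largest open subset satisfying the three conditions by showing each condition defines an open subset whose complement has codimension at least $2$, then intersecting and verifying $G$-stability. First I would address condition (i): since $Y$ is normal, its regular locus $Y_{\reg}$ is open with complement of codimension at least $2$ (normality gives $(R_1)$, so the singular locus has codimension $\geq 2$). Pulling back, $U_1 = \pi^{-1}(Y_{\reg})$ is open, $G$-stable, and big in $X$, since $\pi$ is finite and hence does not drop codimension. On $U_1$ we have that $U_1/G = Y_{\reg}$ is regular, giving (i). Moreover, by Corollary \ref{cor:flat}, $\pi$ is flat over the regular locus of $Y$, so replacing $X$ by $U_1$ we may assume $\pi$ is flat and each $\cA_\lambda$ is invertible.

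Next I would handle condition (ii). Since $G$ acts generically freely (Lemma \ref{lem:faithful}), the nonfree locus $X \setminus X_{\fr}$ is a proper closed $G$-stable subset. Its irreducible components of codimension $1$ are finitely many prime divisors $D_1, \ldots, D_r$, and the union of the higher-codimension components, together with the pairwise intersections and singular loci of the $D_i$, is a closed set of codimension at least $2$. Removing this set yields an open $G$-stable subset $U_2$ on which the nonfree locus is a disjoint union of (the smooth loci of) prime divisors. The key point is that after this excision the remaining nonfree locus is exactly a disjoint union of prime divisors, giving (ii); bigness is automatic since we removed only a codimension $\geq 2$ locus.

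For condition (iii), I would invoke Proposition \ref{prop:Gnormal}(iii): since $X$ is $G$-normal, for each nonfree prime divisor $D$ the ideal $I_{G \cdot D}$ of the semi-local ring $\cO_{X, G\cdot D}$ is principal, hence invertible at the generic points of $G \cdot D$. Invertibility of the ideal sheaf $\cI_{G\cdot D}$ is an open condition, so it holds on an open set whose complement within $G \cdot D$ has codimension at least $1$ in $G \cdot D$, i.e.\ codimension at least $2$ in $X$. Intersecting over the finitely many nonfree divisors and with $U_1 \cap U_2$, and then replacing the result by the intersection of its $G$-translates (or using the factorization \eqref{eqn:quotients} and Lemma \ref{lem:sub}(ii) to produce a $G$-stable big open subset as in the proof of Lemma \ref{lem:faithful}), yields a $G$-stable big open $U$ satisfying (i)--(iii).

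Finally, for the \emph{largest} such $U$, I would argue that each of the three properties is local and stable under enlargement in the following sense: property (i) holds on $U$ iff $\pi(U)$ avoids $Y_{\sing}$, property (ii) restricts the nonfree locus to a disjoint union of prime divisors, and property (iii) is an openness condition on the ideal sheaves. The union of all open $G$-stable subsets with these properties again has these properties, since they can be checked on the members of an open cover; hence there is a unique largest one. The main obstacle I anticipate is condition (ii): ensuring that after excision the nonfree locus is genuinely a \emph{disjoint} union of prime divisors requires care, since the nonfree divisors $D_i$ may a priori meet one another or be nonreduced, so I would need to remove their mutual intersections and non-divisorial components explicitly, and verify this removed set has codimension at least $2$ so that bigness is preserved.
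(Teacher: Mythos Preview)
Your approach is essentially the same as the paper's: show that each of (i)--(iii) defines an open locus whose complement has codimension $\geq 2$, and check $G$-stability. The paper's proof is terser but follows the same line.

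There is one point where your argument does not quite match the statement. The lemma asserts that the largest open subset (with no $G$-stability assumed a priori) satisfying (i)--(iii) exists and is $G$-stable. You construct a particular $G$-stable big open, and then take the union over all $G$-\emph{stable} opens with (i)--(iii); this yields the largest $G$-stable such open, not the largest open outright. To close the gap you need to show that the locus determined by each property is itself $G$-stable, so that the largest open with (i)--(iii) (namely the intersection of these three loci) is automatically $G$-stable. The paper does this: (i) and (ii) are visibly $G$-stable; for (iii), invertibility of $\cI_{G\cdot D}$ is preserved by automorphisms coming from $k$-points of $G$, and one concludes by fpqc descent (recall every open is $G^0$-stable since $G^0$ is infinitesimal, so only $\pi_0(G)$-stability is at issue). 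Your suggestion of ``intersecting over $G$-translates'' is not directly meaningful for a nonconstant group scheme; the alternative via the factorization \eqref{eqn:quotients} is closer to what is needed, but the clean route is the one just described.
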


\begin{proof}
These properties are open, and hold 
in codimension $1$; this readily yields
that $U$ exists and is big in $X$.
To complete the proof, it suffices to
show that each of these properties 
defines a $G$-stable open subset. 
This clearly holds for (i) and (ii).
For (iii), this follows from fpqc descent
as this property is stable by 
automorphisms arising from
$k$-points of $G$.
\end{proof}

We now assume that $X$ satisfies the above 
properties (i), (ii), (iii); we then say that 
$X$ is \emph{strongly $G$-normal}.

To every prime divisor $D$ on $X$, 
we associate a cyclic subgroup scheme 
$H(D)  \subset G$ 
(where $x$ denotes the generic point of $D$)
and a weight $\nu(D) \in \Lambda$ 
(the weight of a homogeneous generator of 
the largest $\Lambda$-graded ideal of
$\cO_{X,G \cdot D}$), as in Proposition 
\ref{prop:local}; recall that
$\nu(D) \vert_{H(D)}$ generates 
$\Lambda(H(D))$.

Given a cyclic subgroup scheme $H$ of $G$ and
a weight $\nu \in \Lambda$ such that 
$\nu \vert_H$ generates $\Lambda(H)$, let
\[ 
U(H,\nu) = X_{\fr} \cup \bigcup_D G \cdot D,
\]
where the union runs over the nonfree
divisors $D$ on $X$ such that $H(D) = H$ and 
$\nu(D) \vert_{H(D)} = \nu \vert_H$.
Then $U(H,\nu)$ is an open $G$-stable subset 
of $X$, and its quotient $V(H,\nu) = U(H,\nu)/G$ 
is identified with an open subset of $Y$.
Also, note that $\nu\vert_H$
identifies $H$ with the cyclic group scheme
$\bfmu_n$, where $n = n(H,\nu)$; moreover,
we have
$\Lambda = \Lambda^H + 
\sum_{m=0}^{n -1} m \nu$ and
$n \nu \in \Lambda^H$. 
The multiplication in $\cA$ yields 
a morphism of $\cB$-modules
\[
\sigma = \sigma_{\nu,n} : \cA_{\nu}^{\otimes n} 
\longrightarrow \cA_{n \nu}.
\]
The restriction $\sigma \vert_{V(H,\nu)}$ 
may be viewed as a section of the invertible 
sheaf 
$\cA_{\nu}^{\otimes - n} 
\otimes_{\cB} \cA_{n \nu}$
on $V(H,\nu)$. We denote its divisor of zeroes by 
$\Delta(H,\nu)$; this is an effective 
divisor on the regular variety $V(H,\nu)$,
that we call the \emph{branch divisor}.

\begin{theorem}\label{thm:ls}
Let $X$ be a strongly $G$-normal variety.

\begin{enumerate}

\item[{\rm (i)}] We have
$X = \bigcup_{H,\nu} U(H,\nu)$, and
$U(H,\nu) \cap U(H',\nu') = X_{\fr}$
unless $H' = H$ and 
$\nu' \vert_H = \nu \vert_H$; then 
$U(H',\nu') = U(H,\nu)$.

\item[{\rm (ii)}] The quotient 
$U = U(H,\nu) \to V=V(H,\nu)$ factors as
$U \stackrel{\varphi_U}{\longrightarrow}
U/H \stackrel{\psi_U}{\longrightarrow}
U/G$,
where $\psi_U$ is a $G/H$-torsor. 
Moreover, the natural homomorphism
of $\cA^H$-algebras
\[
\Phi : \big( \cA^H \otimes_{\cB} 
\Sym_{\cB}(\cA_{\nu}) \big)/\cI
\longrightarrow \cA
\]
restricts to an isomorphism on $V$, 
where $\cI$ denotes the ideal of 
$\cA^H \otimes_{\cB} \Sym_{\cB}(\cA_{\nu})$
generated by the 
$a^n - \sigma(a^{\otimes n})$ 
with $a \in \cA_{\nu}$. 

\item[{\rm (iii)}] The branch divisor 
$\Delta= \Delta(H,\nu)$ 
is reduced and its support is 
$V \setminus V_{\fr}$.

\item[{\rm (iv)}] The fixed point subscheme
$U^H$ is an effective Cartier divisor on $U$,
and we have $U^H = \bigcup G \cdot D$
(union over the $G$-orbits of nonfree divisors of $U$).
Moreover, $\pi_U^*(\Delta) = n U^H$
as Cartier divisors on $U$.

\end{enumerate}
 
\end{theorem}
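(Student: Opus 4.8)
The plan is to prove the four parts largely by reducing everything to the semi-local analysis of Proposition \ref{prop:local} applied at the generic point of each nonfree divisor, and then spreading out to the open sets $U(H,\nu)$. For part (i), I would start from Corollary \ref{cor:factorization}(iv), which says a prime divisor $D$ is free precisely when $H(D)$ is trivial; hence every point of $X$ of codimension $\le 1$ lies either in $X_{\fr}$ or on some $G\cdot D$ with $D$ nonfree, and since $X$ is strongly $G$-normal (so $X\setminus X_{\fr}$ is a disjoint union of prime divisors in codimension $1$, and we may shrink to a big open set) this gives $X=\bigcup_{H,\nu}U(H,\nu)$. The disjointness statement $U(H,\nu)\cap U(H',\nu')=X_{\fr}$ unless $(H,\nu\vert_H)=(H',\nu'\vert_H)$ follows because a given nonfree divisor $D$ determines $H(D)$ and $\nu(D)\vert_{H(D)}$ uniquely (Proposition \ref{prop:local}(i), which asserts $\nu\vert_H$ is uniquely determined by $I_{G\cdot D}$); the components $G\cdot D$ making up the two open sets are therefore disjoint outside $X_{\fr}$.

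For part (ii), the factorization $\varphi_U,\psi_U$ with $\psi_U$ a $G/H$-torsor is the global version of Corollary \ref{cor:factorization}(ii): on $U=U(H,\nu)$ the group $H=H(D)$ is constant along all the relevant divisors, so $\psi_U$ is a torsor in a neighborhood of each such $y$, and these neighborhoods cover $V$. For the algebra statement I would mimic Remark \ref{rem:torsor}/Remark \ref{rem:cyclic}: the section $\sigma=\sigma_{\nu,n}$ defines a homomorphism $\Phi$ out of $\bigl(\cA^H\otimes_{\cB}\Sym_{\cB}(\cA_\nu)\bigr)/\cI$, and to check it is an isomorphism over $V$ it suffices to check at the generic point of each nonfree divisor, where Proposition \ref{prop:local}(ii) gives exactly $A=\bigoplus_{m=0}^{n-1}A^Hf^m\simeq A^H[T]/(T^n-g)$ with $g=f^n=\sigma(f^{\otimes n})$. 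Since both sides are divisorial (by $(S_2)$, Proposition \ref{prop:Gnormal}) and agree in codimension $1$, Lemma \ref{lem:S2}(i) upgrades the codimension-$1$ isomorphism to an isomorphism over the big open $V$.

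For part (iii), I would compute the order of vanishing of $\sigma$ at the generic point of each prime divisor $E\subset V$. Over $V_{\fr}$ the map $\sigma_{\nu,n}$ is an isomorphism (Lemma \ref{lem:torsor}), so $\Delta$ is supported on $V\setminus V_{\fr}$; conversely each nonfree divisor contributes. The reducedness is the crucial local point: at $E=\pi(D)$ we have $B=\cO_{Y,E}$ a DVR, and Proposition \ref{prop:local}(ii) gives $gA=\fn A$, i.e. $g$ is a uniformizer of $B$ up to a unit; identifying $\sigma$ with multiplication by $g$ shows $\sigma$ vanishes to order exactly $1$ at $E$, so $\Delta$ is reduced. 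Then part (iv) is the geometric translation: Proposition \ref{prop:local}(iii) identifies $G\cdot D$ with $X^H$ near $x$ and shows $f$ cuts out $X^H$, so $U^H=\bigcup G\cdot D$ is the Cartier divisor $\div(f)$ locally; the relation $\pi_U^*(\Delta)=nU^H$ follows from $\pi^*(E)$ corresponding to $g\,A=f^n A$, exactly as in the computation $\pi^*(E)=nD$ of Remark \ref{rem:cyclic}.

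I expect the main obstacle to be part (ii), specifically verifying that $\Phi$ really is an isomorphism over all of $V$ rather than merely at the generic points of the nonfree divisors. The codimension-$1$ statement is handed to us by Proposition \ref{prop:local}, but promoting it to the big open set $V$ requires care: I must confirm that both the source $\bigl(\cA^H\otimes_{\cB}\Sym_{\cB}(\cA_\nu)\bigr)/\cI$ and the target $\cA$ are torsion-free and satisfy $(S_2)$ as $\cB$-modules, so that Lemma \ref{lem:S2}(i) applies and the isomorphism in codimension $1$ forces an isomorphism everywhere on $V$. Checking the $(S_2)$ property of the quotient by $\cI$ — rather than of $\cA$ itself, which is already known — is the delicate step, and I would handle it by exhibiting the quotient as a direct sum of the invertible sheaves $\cA^H\otimes_{\cB}\cA_\nu^{\otimes m}$ for $0\le m\le n-1$ over $V$, reducing its $(S_2)$-ness to that of the divisorial sheaves $\cA_\lambda$.
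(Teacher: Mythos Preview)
Your outline is correct and follows the paper's strategy: parts (i), (iii), (iv) are handled just as you describe, by unwinding the definition of $U(H,\nu)$ and invoking Proposition \ref{prop:local} at the generic point of each nonfree divisor (for (iii) the identification of $\sigma$ with $g$ and $gA=\fn A$ gives reducedness; for (iv) the local identification of $U^H$ with the zero locus of $f$ gives the Cartier divisor and the relation $\pi_U^*(\Delta)=nU^H$).

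For part (ii) your approach works but is slightly more laborious than the paper's. The paper exploits the built-in hypothesis that $V$ is \emph{regular} (condition (i) in the definition of strongly $G$-normal), so each divisorial sheaf $\cA_\lambda\vert_V$ is actually invertible. Then each multiplication map $\mult_{\lambda,\mu}$ restricted to $V$ is a nonzero morphism of invertible sheaves, hence an isomorphism $(\cA_\lambda\otimes_\cB\cA_\mu)\vert_V \xrightarrow{\sim}\cA_{\lambda+\mu}\vert_V(-D_{\lambda,\mu})$ for an effective divisor $D_{\lambda,\mu}$ supported on $V\setminus V_{\fr}$ (by Lemma \ref{lem:torsor}); Proposition \ref{prop:local} then shows $D_{\lambda,\mu}=0$ for $\lambda,\mu\in\Lambda^H$ (giving the torsor statement) and likewise for $\lambda\in\Lambda^H$, $\mu=m\nu$ with $0\le m\le n-1$ (giving that $\Phi\vert_V$ is an isomorphism, after noting the source decomposes as $\bigoplus_{m=0}^{n-1}\cA^H\otimes_\cB\cA_\nu^{\otimes m}$ on $V$). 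This sidesteps entirely the $(S_2)$ verification of the source that you correctly flag as the delicate point: invertibility on a regular base replaces it. Your decomposition of the source into invertible summands is exactly the right move, but once you have it, working with divisors of zeros of line-bundle maps is cleaner than the $(S_2)$ extension argument. One small omission: your codimension-$1$ check in (ii) should also cover free divisors, not only nonfree ones; there the map is an isomorphism because $\pi$ is a $G$-torsor.
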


\begin{proof}
(i) All of this follows readily from the 
definition of $U(H,\nu)$.

(ii) For any $\lambda, \mu \in \Lambda$,
the multiplication map 
$\mult_{\lambda,\mu}$ (\ref{eqn:mult}) restricts
to a nonzero homomorphism of invertible
$\cB_V$-modules, and hence yields an isomorphism
\[ 
\left(\cA_{\lambda} \otimes_{\cB} \cA_{\mu}\right)
\vert_V
\stackrel{\sim}{\longrightarrow}
\cA_{\lambda + \mu} 
\vert_V (-D_{\lambda, \mu}),
\]
where $D_{\lambda, \mu}$ is an effective
divisor on $V$. By Lemma \ref{lem:torsor},
$D_{\lambda, \mu}$ is a sum of nonfree
divisors. Combining this lemma with 
Proposition \ref{prop:local}, it follows that
$D_{\lambda, \mu} = 0$ for all
$\lambda, \mu \in \Lambda^H$. This yields
the assertion on $\psi_U$ by using again
Lemma \ref{lem:torsor}. 

We now show that $\Phi_V$ is an isomorphism.
Note that the natural map
\[ \bigoplus_{m=0}^{n - 1}
\cA^H \otimes_{\cB} \cA_{m \nu}
\longrightarrow \cA^H \otimes_{\cB} 
\Sym_{\cB}(\cA_{\nu})/\cI \]
is an isomorphism on $V$. So it suffices
to show that the multiplication map
\[ 
\mult_{\lambda,m \nu} : 
\cA_{\lambda} \otimes_{\cB} \cA_{m \nu}
\longrightarrow \cA_{\lambda + m \nu}
\]
is an isomorphism on $V$ for all 
$\lambda \in \Lambda^H$ and 
$m = 0,\ldots,n -1$. But this map is
an isomorphism at the generic point of
every prime divisor $D$ meeting $U$,
in view of Proposition \ref{prop:local}. 
This yields the desired assertion by arguing
as in the above paragraph.

(iii) This follows from Proposition 
\ref{prop:local}, since $\sigma$ is 
identified with $g$ at the generic point 
of every $D$ as above. 

(iv) The ideal of $U^H$ is generated by
the homogeneous components of $\cA^H$ of
nonzero weight relative to $H$, and hence
by $\cA_{\nu}$ in view of (ii). 
In particular, this ideal is invertible; 
this yields the first assertion.

By Proposition \ref{prop:local} again,
we have $U^H \subset \bigcup \, G \cdot D$
with equality at the generic points.
As $U^H$ and $\bigcup_D \, G \cdot D$ are
Cartier divisors, they must coincide.
The equality of Cartier divisors 
$\pi_U^*(\Delta) = n U^H$ follows 
similarly from Proposition \ref{prop:local}.
\end{proof}

\begin{remark}\label{rem:ls}
The above theorem has a partial converse:
let $U$ be a $G$-variety with quotient 
$\pi: U \to V$, where $V$ is regular.
If $U$ satisfies the conditions (ii) and (iii) 
for some $H$, $\nu$, $\sigma$ and $\Delta$, 
then $U$ is $G$-normal. 

Indeed, by Proposition \ref{prop:Gnormal}, 
it suffices to show that the largest 
$\Lambda$-graded ideal of
$A = \cO_{U,G \cdot D}$ is principal for any 
nonfree divisor $D$. But we have 
$A = A^H[f]$, where $f \in A$ is homogeneous
and satisfies $f^n \in A^H$. Thus,
$\bar{A} = \bar{A}^H[\bar{f}]$, where
$\bar{f}^n = 0$. Moreover, the natural
morphism $\Spec(A^H) \to \Spec(B)$ is
a $G/H$-torsor, hence the morphism
$\Spec(\bar{A}^H) \to \Spec(B/\fn) 
= \Spec(\kappa(y))$ 
is a $(G/H)_{\kappa(y)}$-torsor. 
As a consequence, $\bar{f}$ generates 
the largest $\Lambda$-graded ideal of 
$\bar{A} = A/\fn A$. Since 
$f A \supset f^n A = \fn A$, 
it follows that $f$ generates the largest
$\Lambda$-graded ideal of $A$.
\end{remark}

\begin{remark}\label{rem:secbis}
For any $\lambda \in \Lambda$, 
the multiplication map
$\sigma_{\lambda,\vert G \vert} :
\cA_{\lambda}^{\otimes \vert G \vert}
\to \cB$ restricts to a global section of 
$(\cA_{\lambda}\vert_{Y_{\reg}})
^{\otimes - \vert G \vert}$.
By Remark \ref{rem:sec}, the divisor of 
this section on each $U(H,\nu)$
equals $[G:H] m \Delta(H,\nu)$, where 
$m = m(H,\nu)$ is the unique integer 
such that $0 \leq m \leq \vert H\vert -1$ 
and $\lambda - m \nu \in \Lambda^H$.
As a consequence, the class of 
the divisorial sheaf $\cA_{\lambda}$
in $\Cl(Y)_{\bQ}$ (the divisor class
group of $Y$ with rational coefficients)
satisfies
\begin{equation}\label{eqn:ratclass}
[\cA_{\lambda}] = - \sum_{H,\nu} 
\frac{m(H,\nu)}{\vert H \vert} \Delta(H,\nu)
\end{equation}
with the above notation.
\end{remark}

With the notation of Theorem \ref{thm:ls}
(ii), the morphism 
$\varphi_U: U \to U/H$ is a uniform
cyclic cover with branch divisor $\Delta$,
as defined in \cite{AV}.
We say that a $G$-normal variety $X$ is
\emph{uniform}, if it is the closure of some
$U = U(H,\nu)$; equivalently, the equality
$X = U(H,\nu)$ holds in codimension $1$.
(We do not assume that $X$ is strongly
$G$-normal). Then the closure of the branch 
divisor $\Delta$ in $Y$ is 
a reduced effective divisor that we will 
still call the branch divisor, and 
denote by $\Delta$ for simplicity.

\begin{proposition}\label{prop:uniform}
Let $X = \overline{U(H,\nu)}$ be 
a uniform $G$-normal variety over $Y$. 
Assume that $\cO(Y)^{\times} = k^{\times}$
and the divisor class group $\Cl(Y)$ 
has no $n$-torsion, where $n = \vert G \vert$. 
Then $G = H \simeq \bfmu_n$, the
$G$-variety $X$ over $Y$ is uniquely 
determined by $\Delta$, 
and the class $[\Delta]$ is divisible 
by $n$ in $\Cl(Y)$. Moreover, every reduced 
effective divisor on $Y$ with class divisible 
by $n$ is obtained from a uniform 
$\bfmu_n$-normal variety.
\end{proposition}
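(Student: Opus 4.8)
The plan is to pass to a convenient big open subset, reduce everything to the structure of a single uniform cyclic cover, and then read off the three assertions from the Kummer-type classification of such covers, the key input being that $\Cl(Y)$ has no $n$-torsion. First I would replace $X$ by a $G$-stable big open subset which is strongly $G$-normal and on which $X = U(H,\nu)$; this is harmless, since it changes neither $\Cl(Y)$ nor $\cO(Y)^{\times}$. Writing $V = U(H,\nu)/G \subset Y$, the complement $Y \setminus V$ has codimension $\geq 2$ (because $\pi$ is finite and $X \setminus U(H,\nu)$ has codimension $\geq 2$), so $\cO(V)^{\times} = \cO(Y)^{\times} = k^{\times}$ and the restriction $\Cl(Y) \to \Cl(V)$ is an isomorphism; in particular $\Cl(V)$, and hence $\Pic(V) \hookrightarrow \Cl(V)$, has no $n$-torsion.

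The main step is to prove $G = H$. By Theorem \ref{thm:ls}(ii) the morphism $\psi : U(H,\nu)/H \to V$ is a torsor under the diagonalizable group $G/H$, whose character group is $\Lambda^H = \Lambda(G/H)$. For every $\chi \in \Lambda^H$ the sheaf $\cA_{\chi}$ is invertible on $V$, and the multiplication maps (Lemma \ref{lem:torsor}) give $\cA_{\chi}^{\otimes \ord(\chi)} \simeq \cA_0 = \cB$; thus $[\cA_{\chi}]$ is killed by $\ord(\chi)$, which divides $|G/H|$ and hence $n$, so $[\cA_{\chi}] \in \Pic(V)[n] = 0$ and $\cA_{\chi} \simeq \cO_V$. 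Feeding this, together with $\cO(V)^{\times} = k^{\times}$, into the Kummer classification $H^1(V, G/H) \cong \prod_i k^{\times}/(k^{\times})^{n_i}$ (for $G/H \cong \prod_i \bfmu_{n_i}$), I obtain that $\psi$ is pulled back from $\Spec k$, i.e. $U(H,\nu)/H \cong V \times_k W$ for some $(G/H)$-torsor $W$ over $\Spec k$. But $U(H,\nu)/H$ is a variety, hence geometrically integral, which forces $W_{\kb} = \Spec \kb$; as $W$ is a $(G/H)$-torsor this means $|G/H| = 1$. Therefore $G = H$, and since $H \simeq \bfmu_{|H|}$ by Proposition \ref{prop:local} we get $G \simeq \bfmu_n$.

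With $G = H$, Theorem \ref{thm:ls}(ii) and Remark \ref{rem:cyclic} identify $X$ over $Y$ with the uniform cyclic cover attached to the pair $(\cL, \sigma)$, where $\cL = \cA_{\nu}^{\otimes -1}$ and $\sigma = \sigma_{\nu,n} \in \Gamma(V, \cL^{\otimes n})$ has $\div(\sigma) = \Delta$. Hence $\cO_V(\Delta) \simeq \cL^{\otimes n}$, so $[\Delta] = n[\cL]$ is divisible by $n$ in $\Cl(V) = \Cl(Y)$, which gives the divisibility assertion. For uniqueness I would argue that $\Delta$ recovers the cover: $[\cL]$ is an $n$-th root of $[\cO_Y(\Delta)]$, and it is the unique one because $\Cl(Y)$ has no $n$-torsion; the section $\sigma$ is then determined up to the factor $\cO(V)^{\times} = k^{\times}$, and rescaling $\sigma$ by an $n$-th power is absorbed by an automorphism of $\cL$. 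The delicate point is precisely this last normalization: the residual ambiguity is measured by $k^{\times}/(k^{\times})^n$, which vanishes when $k^{\times} = (k^{\times})^n$ (e.g. $k$ algebraically closed); this is where the hypotheses on $k$ really bite, and it is the step I expect to require the most care.

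Finally, for surjectivity, given a reduced effective divisor $\Delta$ with $[\Delta] = n\mu$ in $\Cl(Y)$, I would choose $\cL$ with $[\cL] = \mu$, fix an isomorphism $\cL^{\otimes n} \simeq \cO_Y(\Delta)$, let $\sigma$ be the corresponding section (so $\div(\sigma) = \Delta$), and form $\cA = \Sym_{\cB}(\cL^{\otimes -1})/\cI$ as in Remark \ref{rem:cyclic}, with its $\bfmu_n$-action over $Y$. By Remark \ref{rem:ls} this cover is $\bfmu_n$-normal (using that $Y$ is normal and $\Delta$ is reduced), and it is a variety: since some component of the reduced divisor $\Delta$ occurs with multiplicity one, the relation $T^n - \sigma$ is Eisenstein at that component and remains irreducible over $\kb(Y)$, so $\cA$ is geometrically integral. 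Thus $\Spec_{\cB}(\cA)$ is a uniform $\bfmu_n$-normal variety over $Y$ with branch divisor $\Delta$, as required.
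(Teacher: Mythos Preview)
Your approach mirrors the paper's closely: pass to a big open, show $G=H$ by proving the $G/H$-torsor $\psi$ is pulled back from $\Spec k$ and then using geometric integrality, and finally read off the cyclic-cover structure to get $[\Delta]=n[\cL]$. The only cosmetic difference in the first step is that the paper factors through a single quotient $\bfmu_d$ of $G/H$ and lands on $\cO_V[T]/(T^d-f)$ with $f\in k^\times$, whereas you run Kummer theory for all of $G/H$ at once; the substance is identical.

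Your hesitation about the residual $k^\times/(k^\times)^n$ ambiguity in the uniqueness step is well placed, and the paper's proof is no more explicit there: it passes from ``$X$ is determined by $(D,\sigma)$'' to ``$X$ is determined by $\Delta$'' invoking only $\cO(Y)^\times=k^\times$ and $\Cl(Y)[n]=0$, without explaining why sections $\sigma$ and $c\sigma$ with $c\in k^\times\setminus(k^\times)^n$ yield isomorphic covers. So you have correctly located the delicate point rather than missed an argument.

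For the surjectivity clause your treatment is more detailed than the paper's (which just cites Theorem~\ref{thm:ls}); the appeal to Remark~\ref{rem:ls} for $G$-normality is exactly right. One caution on your Eisenstein step: the claim that $T^n-\sigma$ ``remains irreducible over $\kb(Y)$'' tacitly assumes the chosen component of $\Delta$ stays reduced after base change to $\kb$. Over an imperfect $k$ a prime divisor need not be geometrically reduced, and then $\sigma$ can acquire multiplicity $>1$ in $\cO_{Y_{\kb}}$, so Eisenstein no longer applies directly; you should either add this hypothesis or argue separately in that case.
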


\begin{proof}
If $H \neq G$ then $G/H$ has a quotient
isomorphic to $\bfmu_d$ for some
$d$ dividing $n$, and hence the $G/H$-torsor 
$\psi : U/H \to V$ factors through a 
$\bfmu_d$-torsor $\phi: W \to V$, where $W$ 
is a variety. 
Since $\Pic(V) = \Cl(V) = \Cl(Y)$ has no
$d$-torsion, we have 
$\phi_*(\cO_W) = \cO_V[T]/(T^d -f)$
for some $f \in \cO(V)^{\times}$
(Remark \ref{rem:torsor}). But 
$\cO(V)^{\times} = \cO(Y)^{\times} 
= k^{\times}$ and hence $W$ is not
geometrically integral, a contradiction.

Thus, $G = H$ and we may assume that 
$G = \bfmu_n$ and $\nu$ is
the defining character. Then 
$\cA = \bigoplus_{m=0}^{n -1} \cA_m$,
where each $\cA_m$ is a divisorial
$\cB$-module (Proposition \ref{prop:Gnormal}).
By Theorem \ref{thm:ls}, the multiplication
map $\mult_{\ell,m} : 
\cA_\ell \otimes_{\cB} \cA_m 
\to \cA_{\ell + m}$
is an isomorphism on $V$ whenever
$\ell + m \leq n -1$. So we may assume
that $\cA_m = \cO_Y(-m D)$ for 
$m = 0, \ldots, n -1$, where $D$
is a Weil divisor on $Y$. Moreover,
the multiplication map 
$\sigma_{1,n} : \cA_1^{\otimes n} \to \cB$ 
yields an isomorphism 
$\cO_Y(- n D) 
\stackrel{\sim}{\longrightarrow}
\cO_Y(- \Delta)$
in view of Theorem \ref{thm:ls} again.
Equivalently, we have a section
$\sigma \in \Gamma(Y,n D)$
with divisor $\Delta$, which uniquely
determines the multiplication of the 
$\cB$-algebra $\cA$. Thus, the $G$-variety 
$X$ over $Y$ is uniquely determined by 
the pair $(D,\sigma)$ up to isomorphism
given by linear equivalence of divisors;
moreover, we have $[\Delta] = n [D]$
in $\Cl(Y)$. In turn, $X$ is uniquely 
determined by $\Delta$ in view of our 
assumptions on $Y$. This proves the 
second assertion; the final one follows 
from Theorem \ref{thm:ls} once more.
\end{proof}

\begin{remark}\label{rem:uniform}
Keep the asssumptions of the above proposition,
and assume in addition that $Y$ is regular.
We may further assume that $G = \bfmu_n$
and $\nu$ is the defining character.
Then the sheaf $\cL = \cA_1$ is invertible;
denote by 
\[ \varphi : L =\bV(\cL) =
 \Spec \,  \Sym_{\cB}(\cL) \longrightarrow Y \]
the line bundle with sheaf of local sections 
$\cL^{\otimes -1}$.  By the above proof,  
we may view $X$ as the zero scheme in $L$ 
of the section 
\[ \tau^n - \sigma  \in 
\Gamma(L, \varphi^*(L^{\otimes n}))
= \bigoplus_{m = 0}^{\infty} \, 
\Gamma(Y,\cL^{\otimes m - n}), \]
where $\tau \in \Gamma(L,\varphi^*(L))
= \bigoplus_{m = 0}^{\infty} \, 
\Gamma(Y,\cL^{\otimes m - 1})$
denotes the canonical section, corresponding
to $1 \in \Gamma(Y, \cO_Y)$, and 
$\sigma \in \Gamma(Y,\cL^{ \otimes -n})$.
This gives back a classical construction
of $\bfmu_n$-covers, see 
e.g.~\cite[Prop.~4.1.6]{Lazarsfeld}.

Also, note that for $p = 2$, every
$\bfmu_2$-normal variety is uniform.
But there exist nonuniform $\bfmu_p$-normal
varieties for any $p \geq 3$: for example,
$\bP^1$ where $\bfmu_p$ acts by multiplication.
\end{remark}

\begin{example}\label{ex:uniform}
Let $X$ be a uniform $G$-normal variety
over the projective space $\bP^N$.
By Proposition \ref{prop:uniform}, we have
$G \simeq \bfmu_n$ and $X$ is classified
by its branch divisor, a reduced hypersurface 
$\Delta \subset \bP^N$ of degree $d$
divisible by $n$. Moreover, $X$ is realized
as a hypersurface in the line bundle
$O_{\bP^N}(d) = \bV(\cO_{\bP^N}(-d))$ 
via the construction of Remark 
\ref{rem:uniform}.

If $X$ is regular,  then so is $X^G$
in view of \cite[Lem.~3.5.2]{Haution}.
Thus,  taking for $\Delta$ a singular
hypersurface yields many examples
of singular $G$-normal varieties.
\end{example}

\section{The relative dualizing sheaf}
\label{sec:dualizing}

Let $G$ be a finite group scheme acting
on a scheme $X$ with quotient morphism
$\pi: X \to Y$. Since $\pi$ is finite,
it has a $(0)$-dualizing sheaf 
$\omega_\pi = \pi^! \cO_X$; this is
the $\cO_X$-module that corresponds to
the $\cA$-module $\cHom_{\cB}(\cA,\cB)$.
It is equipped with a trace map 
$\tr_{\pi} : \pi_*(\omega_\pi) \to \cB$ 
via evaluation at $1$. 
If $\pi$ is flat and a local complete 
intersection (l.c.i.), then $\omega_\pi$ 
is isomorphic to the canonical sheaf
$\omega_{X/Y}$; in particular, 
$\omega_{\pi}$ is invertible (see 
e.g.~\cite[\S 6.4]{Liu}, which will be our 
reference for duality theory). In any case, 
$\omega_{\pi}$ is equipped with a 
$G$-linearization for which the trace map 
is invariant.

If $G$ is linearly reductive, then the
unique $G$-invariant projection 
$\cA \to \cB$ (the ``Reynolds operator'')
yields a global section 
$s_{\pi} \in \Gamma(X,\omega_{\pi})
= \Hom_{\cB}(\cA,\cB)$.
Note that $s_{\pi}$ is $G$-invariant
and satisfies $\tr_{\pi}(s_{\pi}) = 1$.
Also, the formation of $\omega_{\pi}$
and $s_{\pi}$ commutes with flat base
change $Y' \to Y$. If the morphism $\pi$ 
is flat l.~c.~i, then we denote $s_{\pi}$ by
$s_{X/Y}$.

Given a subgroup scheme $H \subset G$
and the corresponding factorization
of $\pi$ as 
$X \stackrel{\varphi}{\longrightarrow}
Z = X/H \stackrel{\psi}{\longrightarrow}
Y = X/G$,
we may identify 
$\omega_{\pi}$ with 
$\omega_{\varphi} \otimes_{\cO_X}
\varphi^*(\omega_{\psi})$
(see \cite[Lem.~6.4.26]{Liu}). 
If $G$ is linearly reductive, then we have
\begin{equation}\label{eqn:adj} 
s_{\pi} = 
s_{\varphi} \otimes \varphi^*(s_{\psi}),
\end{equation}
where 
$s_{\psi} \in \Gamma(Z,\omega_{\psi})
= \Hom_{\cB}(\cA^H,\cB)$ is the 
projection $\cA^H \to \cB$. If in addition
$H$ is a normal subgroup of $G$, then 
$\psi$ is the quotient morphism by $G/H$, and 
$s_{\psi}$ is the corresponding canonical section.

We will need two further observations,  certainly 
well-known but for which we could not locate 
any reference.

\begin{lemma}\label{lem:cantors}
Let $G$ be a finite group scheme, and
$\pi : X \to Y$ a $G$-torsor. 

\begin{enumerate}

\item[{\rm (i)}]
The (finite flat) morphism $\pi$ is l.c.i.  and satisfies
$\omega_{X/Y} \simeq \cO_X$. 

\item[{\rm (ii)}]
If $G$ is linearly reductive,  then $s_{X/Y}$ 
is a trivializing section of $\omega_{X/Y}$.

\end{enumerate}

\end{lemma}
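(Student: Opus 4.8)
The plan is to reduce everything to the case of the trivial torsor by faithfully flat descent, exactly as was done for the multiplication maps in Lemma \ref{lem:torsor}. First I would recall that a $G$-torsor is finite flat, and that both the property of being l.c.i.\ and the formation of $\omega_{X/Y}$, together with its trivializing section $s_{X/Y}$ when $G$ is linearly reductive, are local on $Y$ for the fpqc topology (this is the content of the compatibility of $\omega_\pi$ and $s_\pi$ with flat base change already recorded in Section \ref{sec:dualizing}). Since a $G$-torsor becomes trivial after the faithfully flat base change $X \to Y$ itself, it suffices to treat the trivial torsor $\pr_Y : G \times Y \to Y$, i.e.\ the case $\cA = \cB \otimes_k \cO(G)$ as a $\cB$-$G$-algebra.

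For part (i), in the trivial case the morphism $\pi$ is just the base change to $Y$ of the structure morphism $\Spec(\cO(G)) \to \Spec(k)$. So it is enough to show that $\Spec(\cO(G)) \to \Spec(k)$ is l.c.i.\ and that $\omega_{\cO(G)/k} \simeq \cO(G)$. But $\Spec(k)$ is a field, hence regular, and $\cO(G)$ is a finite $k$-algebra, so the morphism is automatically a (finite flat) l.c.i.; more to the point, $\cO(G)$ is a finite-dimensional Hopf algebra over $k$, and such algebras are Gorenstein (in fact Frobenius), so $\Hom_k(\cO(G),k) \simeq \cO(G)$ as $\cO(G)$-modules. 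This gives $\omega_{\cO(G)/k} \simeq \cO(G)$, and base-changing to $Y$ and descending yields the assertion for an arbitrary $G$-torsor. The invertibility of $\omega_{X/Y}$ then matches the general fact, recalled above, that for a flat l.c.i.\ morphism $\omega_\pi$ is invertible and isomorphic to $\omega_{X/Y}$.

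For part (ii), assume $G$ is linearly reductive. Here $s_{X/Y} \in \Hom_{\cB}(\cA,\cB)$ is the Reynolds projection, and I must check it generates $\omega_{X/Y} = \cHom_{\cB}(\cA,\cB)$ as an $\cA$-module. Again by descent it suffices to treat the trivial torsor, where the claim becomes: the $G$-invariant projection $\cO(G) \to k$ (the integral of the Hopf algebra) generates $\Hom_k(\cO(G),k)$ as an $\cO(G)$-module. This is precisely the statement that the integral is a Frobenius form, i.e.\ that the associated bilinear pairing $(a,b) \mapsto s(ab)$ on $\cO(G)$ is nondegenerate; nondegeneracy of the integral pairing is a standard property of finite-dimensional Hopf algebras (the existence of the modular/Nakayama isomorphism shows the integral generates the dual). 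Concretely one checks that the map $\cO(G) \to \Hom_k(\cO(G),k)$, $a \mapsto (b \mapsto s(ab))$, is an isomorphism of $\cO(G)$-modules, whence $s$ is a free generator. Descending back to $Y$ shows $s_{X/Y}$ trivializes $\omega_{X/Y}$.

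I expect the main obstacle to be part (ii): one must identify the Reynolds operator with the normalized Hopf-algebra integral and then invoke the nondegeneracy of the integral pairing (equivalently, the Frobenius property of $\cO(G)$ together with the fact that the integral is a two-sided nonzero integral for a linearly reductive — hence semisimple-coalgebra — group scheme). Part (i) is comparatively formal once the descent framework is set up, since Gorensteinness of finite group scheme algebras is classical; the only care needed is to ensure that the chosen isomorphism $\omega_{X/Y} \simeq \cO_X$ and the section $s_{X/Y}$ are compatible, which is exactly why part (ii) must be argued through the explicit integral rather than an abstract isomorphism.
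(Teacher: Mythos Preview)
Your descent strategy is sound and part (ii) is essentially correct, but part (i) has a genuine gap in the l.c.i.\ claim. You write that ``$\Spec(k)$ is a field, hence regular, and $\cO(G)$ is a finite $k$-algebra, so the morphism is automatically a (finite flat) l.c.i.'': this is false in general---for instance $k[x,y]/(x^2,xy,y^2)$ is finite over $k$ but not a complete intersection. You then invoke that finite-dimensional Hopf algebras are Gorenstein (Frobenius), which does give $\omega_{\cO(G)/k} \simeq \cO(G)$, but Gorenstein is strictly weaker than l.c.i.\ and does not establish the first assertion of the lemma. The fix is either to cite the (nontrivial) fact that finite commutative Hopf algebras over a field are complete intersections, or to argue directly as the paper does.

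The paper's route to (i) is quite different and avoids this black box: it embeds $G$ into $\GL_n$ and factors $\pi$ as $X \hookrightarrow \GL_n \times^G X \to Y$, where the second map is a $\GL_n$-torsor (hence smooth with trivialized $\det\Omega^1$) and the first is the fibre of the smooth projection $\GL_n \times^G X \to \GL_n/G$ (hence a regular immersion with trivial conormal sheaf). This yields both l.c.i.\ and $\omega_{X/Y}\simeq\cO_X$ in one stroke, using only the induced-variety machinery already set up in Section~\ref{sec:env}. For (ii) the paper likewise reduces to $G \to \Spec(k)$ via the cartesian squares coming from $\alpha,\pr_X : G\times X \to X$ and $\pr_G : G\times X \to G$, but then computes by hand: first for diagonalizable $G$ via the dual basis $(e_\lambda)$ of $\cO(G)^*$ and the formula $\mu\cdot e_\lambda = e_{\lambda-\mu}$, then for general linearly reductive $G$ by passing to $\bar k$ and decomposing $G \simeq G^0 \rtimes \pi_0(G)$. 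Your appeal to the nondegeneracy of the Hopf-algebra integral is a cleaner shortcut, provided you make explicit the identification of the Reynolds projection with a two-sided integral in $\cO(G)^*$; the paper's computation trades that generality for self-containedness.
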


\begin{proof}
(i) We use the construction at the beginning of
Section \ref{sec:env}.
Choose an embedding of $G$ in some $\GL_n$.
Then $\pi$ is the composition
\[
X \stackrel{\iota}{\longrightarrow}
Z = \GL_n \times^G  X
\stackrel{\psi}{\longrightarrow} Y, 
\]
where the morphism $\iota$ is a closed immersion 
with image $G \times^G X$,  and $\psi$
is the projection 
$\GL_n \times^G  X \to X/G = Y$.
Thus, $\psi$ is a $\GL_n$-torsor.
In particular, it is smooth and its 
relative tangent sheaf is isomorphic to 
$\Lie(\GL_n) \otimes_k \cO_{Z/Y}$.
As a consequence,
$\det(\Omega^1_{Z/Y}) \simeq \cO_{Z/Y}$.
Also, recall that $X$ is the fibre at the base
point of the other projection 
$\varphi : \GL_n \times^G  X \to \GL_n/G$.
Thus, $\iota$ is a regular immersion
and its conormal sheaf $\cC_{X/Z}$
is trivial (see \cite[Prop.~6.3.11]{Liu}).
This gives an isomorphism
$\omega_{X/Y} \simeq \cO_X$ in view of 
loc.~cit., Def.~6.4.7.

(ii) We use the commutative diagram of flat morphisms
\[ \xymatrix{
G \ar[r]  
& \Spec(k)  \\
G \times X 
\ar[r]^-{\pr_X} \ar[d]_{\alpha} \ar[u]^{\pr_G}
& X \ar[d]^{\pi} \ar[u] \\
X \ar[r]^-{\pi} & Y,
} \]
where both squares are cartesian.
Since the formation of $s_{X/Y}$ commutes
with flat base change, this yields
$\alpha^*(s_{X/Y}) = s_{G \times X/X}
= \pr_G^*(s_G)$. By fpqc descent, we are
reduced to checking that 
$s_G \in \Gamma(G,\omega_G) = \Hom_k(\cO(G),k)$ 
is a free generator of the $\cO(G)$-module
$\omega_G$. 

In the case where $G$ is diagonalizable,  
the $k$-vector space $\Hom_k(\cO(G),k)$
has basis
($e_{\lambda})_{\lambda \in \Lambda}$
dual to the basis $\Lambda$ of $\cO(G)$,
and the $\cO(G)$-module structure of
$\Gamma(G,\omega_G)$ is given by
$\mu \cdot e_{\lambda} = e_{\lambda - \mu}$.
Therefore, this module is freely generated
by $e_0 = s_G$.

In the general case of a linearly reductive
group $G$, we may assume $k$ 
algebraically closed by fpqc descent again.
Then $G^0$ is diagonalizable and
$G \simeq G^0 \rtimes \pi_0(G)$,
where $\pi_0(G)$ is constant of order $N$
prime to $p$.
Thus, $s_G : \cO(G) \to k$ satisfies
\[ s_G(\varphi) = \frac{1}{N} 
\sum_{g \in G(k)} g \cdot \varphi_0 \]
for any $\varphi \in \cO(G)$, 
where $\varphi_0$ denotes the 
homogeneous component of weight $0$
relative to $G^0$; note that
$g \varphi_0 = (g \cdot \varphi)_0$
as $g$ induces an automorphism of
$\Lambda(G^0)$.  Moreover,  we have
an isomorphism of algebras
\[ \cO(G) \simeq \prod_{g \in G(k)} \cO(gG^0) \]
and $s_G(\varphi) = \frac{1}{N} \varphi_0$
for any $g \in G(k)$ and $\varphi \in \cO(g G^0)$.
As $g G^0$ is a trivial $G^0$-torsor,  
$s_G$ restricts to a nonzero scalar multiple of 
$s_{gG^0}$. This implies our assertion. 
\end{proof}

\begin{lemma}\label{lem:canS2}
Let $G$ be a finite group scheme, 
and  $X$ a $G$-normal variety with quotient 
$\pi : X \to Y$. Then the sheaf $\omega_{\pi}$ 
is torsion-free and satisfies ($S_2$).
\end{lemma}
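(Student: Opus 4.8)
The plan is to transport both assertions to the base $Y$ through the identification $\pi_*(\omega_\pi) = \cHom_{\cB}(\cA,\cB)$, and to invoke only that $Y$ is normal, which holds by Proposition \ref{prop:Gnormal}(i) since $X$ is $G$-normal. Write $F = k(Y)$ and $K = k(X)$, and recall that $K = \cA \otimes_{\cB} F$ is a finite field extension of $F$, so that $\omega_{K/F} := \Hom_F(K,F)$ is a $1$-dimensional $K$-vector space. For torsion-freeness I would first note that for any coherent $\cB$-module the dual $\cHom_{\cB}(\cA,\cB)$ is torsion-free over $\cB$: if a nonzerodivisor $b$ kills a local homomorphism $\phi$, then $b\,\phi(a) = 0$ for every local section $a$, whence $\phi = 0$, as $\cB = \cO_Y$ is a sheaf of domains. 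The resulting $\cA$-linear inclusion $\cHom_{\cB}(\cA,\cB) \hookrightarrow \omega_{K/F}$ (target the constant sheaf) then exhibits $\omega_\pi$ as an $\cO_X$-submodule of a $k(X)$-vector space, which is $\cO_X$-torsion-free; hence $\omega_\pi$ is torsion-free.

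For Serre's condition, by Lemma \ref{lem:S2}(ii) applied to the finite surjective morphism $\pi$, the sheaf $\omega_\pi$ satisfies $(S_2)$ on $X$ if and only if $\cF := \pi_*(\omega_\pi) = \cHom_{\cB}(\cA,\cB)$ satisfies $(S_2)$ on $Y$. To prove the latter I would use the criterion of Lemma \ref{lem:S2}(i): for opens $U \subseteq V \subseteq Y$ with $V \setminus U$ of codimension $\geq 2$, I must show that restriction $\cF(V) \to \cF(U)$ is an isomorphism. Injectivity is immediate from torsion-freeness together with the density of $U$ in $V$. For surjectivity I would view a section over $U$ as a functional $\psi \in \omega_{K/F}$ subject to the condition $\psi(\cA_{x}) \subseteq \cO_{Y,x}$ for all $x \in U$. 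Since $V \setminus U$ has codimension $\geq 2$, every codimension-$1$ point of $V$ already lies in $U$; hence for each $y \in V$ and each germ $a \in \cA_y$ the rational function $\psi(a) \in F$ is regular at every height-$1$ prime of the normal local ring $\cO_{Y,y}$. Normality gives $\cO_{Y,y} = \bigcap_{\he \mathfrak{p} = 1} (\cO_{Y,y})_{\mathfrak p}$, so $\psi(a) \in \cO_{Y,y}$ and therefore $\psi$ defines a section of $\cF$ over all of $V$. Thus $\cF(V) = \cF(U)$, and $\cF$ is $(S_2)$.

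The substantive point, and the only place where anything beyond ``$\pi$ finite, $X$ integral'' enters, is the normality of $Y$, which feeds the Hartogs-type extension in the surjectivity step and is supplied by $G$-normality through Proposition \ref{prop:Gnormal}(i). I expect this extension to be the main (if mild) obstacle: one must verify carefully that membership of a functional $\psi$ in the dual sheaf is genuinely a codimension-$1$ condition on $Y$, after which normality upgrades it across the codimension-$\geq 2$ locus $V \setminus U$. It is worth noting that the argument never uses the $(S_2)$ property of $\cA$ itself; the dual of an arbitrary coherent sheaf into $\cO_Y$ is automatically $(S_2)$ once $Y$ is normal.
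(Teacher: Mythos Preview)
Your proof is correct and follows essentially the same plan as the paper: transport both assertions to $Y$ via the identification $\pi_*(\omega_\pi)=\cHom_{\cB}(\cA,\cB)$, reduce the $(S_2)$ check to the pushforward by Lemma~\ref{lem:S2}(ii), and then use only the normality of $Y$. The paper differs in execution at two points: for torsion-freeness it invokes the duality adjunction $\Hom_{\cO_X}(\cF,\omega_\pi)\simeq\Hom_{\cB}(\pi_*\cF,\cB)$ and observes that this vanishes for torsion $\cF$, rather than embedding $\omega_\pi$ into $\omega_{K/F}$; and for the $(S_2)$ property of $\cHom_{\cB}(\cA,\cB)$ it simply cites \cite[Cor.~1.2 and Prop.~1.3]{Hartshorne}, whereas you unpack this into an explicit Hartogs extension via Lemma~\ref{lem:S2}(i). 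Your closing remark that the argument never uses $(S_2)$ for $\cA$ itself is exactly the content of the Hartshorne reference the paper invokes.
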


\begin{proof}
By local duality, we have an isomorphism
$\Hom_{\cO_X}(\cF,\omega_{\pi}) \simeq
\Hom_{\cB}(\pi_*(\cF),\cB)$
for any coherent sheaf $\cF$ on $X$.
If $\cF$ is torsion then so is $\pi_*(\cF)$;
this readily implies that $\omega_{\pi}$
is torsion-free. To show that $\omega_{\pi}$
is ($S_2$), it suffices to check that
so is $\pi_*(\omega_{\pi})$ (Lemma 
\ref{lem:S2}). As
$\pi_*(\omega_{\pi}) \simeq 
\cHom_{\cB}(\cA,\cB)$, the desired assertion
follows from the normality of $Y$ and
\cite[Cor.~1.2 and Prop.~1.3]{Hartshorne}.
\end{proof}

We also record a preliminary result,  to be
used in Sections \ref{sec:curves} and 
\ref{sec:lrgs}.

\begin{lemma}\label{lem:flatlci}
Consider again a finite group scheme $G$,  and 
a $G$-normal variety $X$ with quotient $\pi : X \to Y$. 
Then there exists a $G$-stable big open subset
$U \subset X$ such that the quotient $V = U/G$
is regular and
the morphism $\pi \vert_U : U \to V$ is flat l.c.i.
\end{lemma}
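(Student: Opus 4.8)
The plan is to reduce the whole statement to a question about the fibres of $\pi$ at points of codimension one, and there to feed in the local structure theory. First I would replace $X$ by $\pi^{-1}(Y_{\reg})$: since $Y$ is normal, $Y_{\reg}$ is big in $Y$, and as $\pi$ is finite its preimage is $G$-stable and big in $X$ (a finite surjective morphism preserves dimensions of closed subsets). So we may assume $Y$ regular. Next I claim $\pi$ is flat in codimension $1$: at the generic point of a prime divisor $D$ the semi-local ring $\cO_{X,G\cdot D}$ is a finite torsion-free module over the discrete valuation ring $\cO_{Y,\pi(D)}$, hence free. The flat locus of $\pi$ equals $\pi^{-1}$ of the open set where $\cA=\pi_*\cO_X$ is locally free (since $\cA$ is free at $y$ iff $\pi$ is flat at all points over $y$); it is therefore $G$-stable, and its complement, avoiding every codimension-$1$ point, is big. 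Replacing $X$ by this locus, $\pi$ becomes finite flat over the regular variety $Y$, and $X$ is then Cohen--Macaulay, being flat over a regular base with Artinian (hence Cohen--Macaulay) fibres.

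The locus $U'\subset X$ where $\pi$ is l.c.i. is open, and because $\pi$ is flat, l.c.i.-ness at $x$ may be tested on the fibre: $x\in U'$ iff $X_{\pi(x)}$ is a local complete intersection $\kappa(\pi(x))$-scheme at $x$ (see e.g.\ \cite{EGA}). Hence it suffices to prove that $X_y$ is a complete intersection for every codimension-$1$ point $y\in Y$: granting this, the closed set $X\setminus U'$ has codimension $\geq 2$, its $G$-saturation $G\cdot(X\setminus U')$ has the same dimension (as $G\times(X\setminus U')\to X$ is finite), and the complement of this $G$-stable codimension-$\geq 2$ set is the desired $U$. If $y\in Y_{\fr}$, then $\pi$ is a $G$-torsor near $y$ and $X_y\simeq G_{\kappa(y)}$ is l.c.i.\ by Lemma \ref{lem:cantors}(i). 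There remain the finitely many non-free divisors.

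When $G$ is diagonalizable this last point is exactly Proposition \ref{prop:local}. Writing $H=H(D)$, $\nu=\nu(D)$, $A=\cO_{X,G\cdot D}$ and $B=\cO_{Y,\pi(D)}$, that proposition gives $H\simeq\bfmu_n$ and $A\simeq A^H[T]/(T^n-g)$ with $g\in A^H$ and $gA=\fn A$, while $A^H$ is a $G/H$-torsor over $B$ by Corollary \ref{cor:factorization}. Thus $\Spec A\to\Spec B$ factors as the cyclic cover $\Spec A\to\Spec A^H$, cut out by the single equation $T^n-g$ and hence l.c.i., followed by the torsor $\Spec A^H\to\Spec B$, l.c.i.\ by Lemma \ref{lem:cantors}(i); a composite of l.c.i.\ morphisms is l.c.i. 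Equivalently, by Proposition \ref{prop:fibre} the fibre $X_y\simeq G_{\kappa(y)}\times^{H}\Spec\big(\kappa(x)[T]/(T^n)\big)$ is a complete intersection. This settles the diagonalizable case.

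The reduction of a general finite $G$ to this case is where I expect the real work to lie. When $G$ is linearly reductive, Remark \ref{rem:linred} reduces us, after a separable base change (along which flatness and l.c.i.-ness descend), to the factorization $X\to X/G^0\to Y$ of Lemma \ref{lem:fact} with $G^0$ diagonalizable, so the previous paragraph applies. The main obstacle is the non-free codimension-$1$ fibre once $G$ is \emph{not} linearly reductive: there the stabilizer need no longer be $\bfmu_n$, wild ramification intervenes, and Proposition \ref{prop:local} does not apply, so one must combine the cyclic (tame) cover structure with an extension-of-discrete-valuation-rings argument for the étale and wild directions while controlling the infinitesimal part responsible for the cuspidal singularities. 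Concretely, I expect the crux to be establishing the equivariant slice $X_y\simeq G_{\kappa(y)}\times^{H}F$ with $F$ a complete intersection and $G_{\kappa(y)}/H$ itself l.c.i.\ over $\kappa(y)$ — the latter by descent from $G$, which is l.c.i.\ as the trivial torsor (Lemma \ref{lem:cantors}) — and deducing that the total fibre remains a complete intersection.
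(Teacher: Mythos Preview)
Your argument is complete and correct only in the diagonalizable case; for an arbitrary finite group scheme $G$ --- which is what the lemma asserts --- you openly do not finish. The slice decomposition $X_y\simeq G_{\kappa(y)}\times^{H}F$ with $F$ a complete intersection is not available in the paper outside the diagonalizable setting, and wild stabilizers genuinely obstruct any such fibrewise description. So the passage ``I expect the crux to be\ldots'' is not a reduction but an unfinished problem.

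The paper's proof avoids fibre computations entirely and handles all finite $G$ at once. After your same reductions (replace $X$ by $\pi^{-1}(Y_{\reg})$, then shrink so $\pi$ is flat), it invokes the induced variety $Z=\GL_n\times^G X$ from Section~\ref{sec:env}. The point is that $Z$ is \emph{normal} precisely because $X$ is $G$-normal (Lemma~\ref{lem:normal}); hence $Z_{\reg}$ is big in $Z$, and it is $\GL_n$-stable since $\GL_n$ is smooth. Writing $Z_{\reg}=\GL_n\times^G U$ for the corresponding $G$-stable big open $U\subset X$, one factors $\pi\vert_U$ as the regular immersion $\iota:U\hookrightarrow Z_{\reg}$ (a fibre of the smooth projection $Z\to\GL_n/G$) followed by the $\GL_n$-torsor $\psi\vert_{Z_{\reg}}:Z_{\reg}\to Y$, which is a morphism between regular varieties and hence l.c.i. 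The composite is then l.c.i.\ by \cite[Prop.~6.3.20]{Liu}.

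So the key idea you are missing is not a sharper local analysis but the global trick of trading $G$-normality of $X$ for ordinary normality of $\GL_n\times^G X$; once $Z_{\reg}$ is available, Serre's criterion and the smooth-base factorization do all the work, with no case distinction on $G$.
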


\begin{proof}
We may replace $X$ with $\pi^{-1}(Y_{\reg})$,
and hence assume that $Y$ is regular.
Then $\pi$ is flat in codimension $1$,  so that 
we may further assume that it is flat everywhere.
We now argue as in the proof of Lemma 
\ref{lem:cantors} (i).  With the notation of that proof,
the variety $Z$ is normal
by Lemma \ref{lem:normal},  and $Z_{\reg}$
is stable by $\GL_n$ as the latter is smooth.  
Thus,  we have $Z_{\reg} = \GL_n \times^G U$
for a unique $G$-stable open subset $U \subset X$.
Moreover,  $U$ is big in $X$,
since $Z_{\reg}$ is big in $Z$. 
To complete the proof,  it suffices to show that
$\pi\vert_U$ is l.c.i.  But
$\pi \vert_U = \psi \vert_{Z_{\reg}}  \circ \iota$,  
where $\iota : U \to Z_{\reg}$ is a regular immersion
and $\psi \vert_{Z_{\reg}} : Z_{\reg} \to Y$ 
is a morphism between regular varieties,  
and hence is l.c.i.  by \cite[Ex.~6.3.18]{Liu}.  
So the desired assertion follows from loc. cit., 
Prop.~6.3.20.
\end{proof}

Next, we assume that $G$ is diagonalizable
and $X$ is a $G$-normal variety on which
$G$ acts faithfully. Denote by 
$\iota : U \subset X$
the inclusion of the largest strongly 
$G$-normal subset (Lemma \ref{lem:open}).
Then we have 
$\omega_{\pi} = \iota_*(\omega_{\pi \vert_U})$
in view of Lemmas \ref{lem:S2} and 
\ref{lem:canS2}. Therefore, to determine
$\omega_{\pi}$, we may further assume that
\emph{$X$ is strongly $G$-normal}.

\begin{theorem}\label{thm:can}
With the above assumptions, $\pi$ is flat and
l.c.i.  Moreover,
\begin{equation}\label{eqn:can} 
\div(s_{X/Y}) = \sum \big( \vert H(D) \vert -1 \big) \, 
G \cdot D, 
\end{equation}
where the sum runs over the $G$-orbits
of nonfree divisors $D$.
\end{theorem}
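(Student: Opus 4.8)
The plan is to first upgrade $\pi$ to a flat l.c.i.\ morphism, so that $\omega_\pi$ becomes an invertible sheaf $\omega_{X/Y}$ carrying the global section $s_{X/Y}$, and then to compute $\div(s_{X/Y})$ at the generic point of each prime divisor, reducing the whole calculation to one explicit computation on a cyclic cover. Since $X$ is strongly $G$-normal, the quotient $Y$ is regular, so $\pi$ is flat by Corollary \ref{cor:flat} (and the remark following it). For the l.c.i.\ property I would argue locally on the cover $X = \bigcup_{H,\nu} U(H,\nu)$ of Theorem \ref{thm:ls}(i). On each $U = U(H,\nu)$ the quotient factors as $U \xrightarrow{\varphi} U/H \xrightarrow{\psi} U/G$, where $\psi$ is a $G/H$-torsor, hence l.c.i.\ by Lemma \ref{lem:cantors}(i), and $\varphi$ is the uniform cyclic cover of Remark \ref{rem:cyclic}, locally of the form $\Spec\big(\cA^H[T]/(T^n - g)\big) \to \Spec(\cA^H)$ with $n = \vert H \vert$. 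As a relative hypersurface in $\bA^1_{\cA^H}$, $\varphi$ is a regular immersion into a smooth morphism, hence l.c.i.; composing, $\pi\vert_U$ is l.c.i., and therefore so is $\pi$. Consequently $\omega_\pi \simeq \omega_{X/Y}$ is invertible, and $\div(s_{X/Y})$ is a genuine Cartier divisor, determined at the generic points of the prime divisors of $X$.

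Next I would dispose of the free divisors and reduce to the cyclic cover $\varphi$. Along a free divisor, $\pi$ is a $G$-torsor in a neighborhood, so $s_{X/Y}$ is a trivializing section there by Lemma \ref{lem:cantors}(ii) and contributes nothing to $\div(s_{X/Y})$, matching the factor $\vert H(D) \vert - 1 = 0$. For a nonfree divisor $D$ with $H = H(D)$, the adjunction identity (\ref{eqn:adj}) gives $s_\pi = s_\varphi \otimes \varphi^*(s_\psi)$; since $\psi$ is a torsor near the image point (Theorem \ref{thm:ls}(ii)), $s_\psi$ trivializes $\omega_\psi$ by Lemma \ref{lem:cantors}(ii), so $\varphi^*(s_\psi)$ is nowhere vanishing and $\div(s_{X/Y}) = \div(s_\varphi)$ in a $G$-stable neighborhood of $G \cdot D$.

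The core is then a local computation on $\varphi$. Writing $A = \cO_{X,G \cdot D} = \bigoplus_{m=0}^{n-1} A^H f^m$ with $f$ the homogeneous generator of Proposition \ref{prop:local} and $f^n = g \in A^H$, the section $s_\varphi$ is the $H$-invariant (Reynolds) projection $A \to A^H$, i.e.\ the $A^H$-linear functional sending $f^i \mapsto \delta_{i,0}$. On the other hand, $\omega_\varphi = \Hom_{A^H}(A,A^H)$ is free of rank one over $A$, with canonical generator $s$ for the hypersurface $T^n - g$ characterized by $s(f^i) = \delta_{i,n-1}$. Using $f^n = g$ one checks directly that $s_\varphi = f^{n-1}\cdot s$, whence $\div(s_\varphi) = (n-1)\,\div(f)$. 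Finally, $f$ generates the ideal of the fixed-point divisor $U^H = G \cdot D$ by Theorem \ref{thm:ls}(iv), so $\div(f) = G \cdot D$ and $\div(s_\varphi) = (\vert H(D) \vert - 1)\, G \cdot D$ near $D$. Summing over the $G$-orbits of nonfree divisors yields (\ref{eqn:can}).

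I expect the main obstacle to be the explicit identification in the last paragraph: pinning down the canonical generator $s$ of the dualizing module of the cyclic cover and verifying the clean relation $s_\varphi = f^{n-1} s$, which is precisely where the ramification contribution $\vert H(D) \vert - 1$ is produced. Everything else amounts to assembling the local structure of Theorem \ref{thm:ls} with the factorization and flat-base-change properties of $\omega_\pi$ and $s_\pi$ recorded at the start of the section.
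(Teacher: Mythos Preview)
Your proposal is correct and follows essentially the same route as the paper: reduce to each $U(H,\nu)$, use the factorization $\pi = \psi \circ \varphi$ with $\psi$ a $G/H$-torsor (so $s_\psi$ trivializes $\omega_\psi$ by Lemma~\ref{lem:cantors}) and $\varphi$ the uniform cyclic cover, and then perform the explicit dual-basis computation on $A = \bigoplus_{m=0}^{n-1} A^H f^m$; your generator $s$ is exactly the paper's $e_{n-1}$, and your identity $s_\varphi = f^{n-1}\cdot s$ is the paper's $e_0 = f^{n-1} e_{n-1}$. The only cosmetic difference is that you spell out the l.c.i.\ argument via the hypersurface presentation of $\varphi$, whereas the paper simply cites Theorem~\ref{thm:ls}(ii) for this.
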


\begin{proof}
The first assertion follows from Corollary 
\ref{cor:flat} and Theorem \ref{thm:ls} (ii).  
With the notation of this theorem,
$X$ is covered by the $G$-stable open
subsets $U(H,\nu)$, and hence we may 
assume that $X = U(H,\nu)$. Then
$\pi = \psi \circ \varphi$,
where $\psi : Z = X/H \to Y = X/G$ is 
a $G/H$-torsor.  Thus,  $s_{Z/Y}$ is 
a trivializing section of 
$\omega_{Z/Y}$ (Lemma \ref{lem:cantors}). 
Also,
$\omega_{X/Y} \simeq \omega_{X/Z}$
by \cite[Thm.~6.4.9]{Liu}; this identifies
$s_{X/Y}$ with $s_{X/Z}$ in view of
(\ref{eqn:adj}). So it suffices to 
check that the divisor of $s_{X/Z}$ equals 
$(\vert H \vert -1) \, \sum_D G \cdot D$
(sum over the $G$-orbits of nonfree divisors).
 
Let $D$ be such a divisor, with generic 
point $x$. With the notation of Proposition
\ref{prop:local}, the $A^H$-module $A$
is free with basis $1,f,\ldots,f^{n-1}$,
where $f$ is a local equation of
$G \cdot x$ at $x$, and $n = \vert H \vert$.
Denote by $e_0,\ldots,e_{n-1}$ the dual
basis of the $A^H$-module 
$\varphi^! A = \Hom_{A^H}(A,A^H)$; then
$e_0$ is the projection $A \to A^H$.
Moreover, the $A$-module structure of 
$\varphi^! A$ is given by 
$f \cdot e_i = e_{i-1}$ for 
$i = 1,\ldots, n-1$, and 
$f \cdot e_0 = e_{n-1}$. So 
$\varphi^! A$ is freely generated by 
$e_{n-1}$, and $e_0 = f^{n-1} e_{n-1}$.
In view of Proposition \ref{prop:local}
again,  it follows that 
$\div(s_{X/Y}) = (n-1) G \cdot D$
in a neighborhood of $x$.
\end{proof}

\begin{remark}\label{rem:can}
Still assuming $G$ diagonalizable and
$X$ strongly $G$-normal,  we may rewrite 
(\ref{eqn:can}) as the equality of Cartier divisors
\begin{equation}\label{eqn:div}
\div(s_{X/Y}) = \pi^*(\Delta_Y) - G \cdot \Delta_X,
\end{equation}
where $\Delta_Y = Y \setminus Y_{\fr}$
and 
$\Delta_X = X \setminus X_{\fr}$
are viewed are reduced effective divisors,
so that $\Delta_Y$ is the branch divisor.
Indeed,  
\[ G \cdot \Delta_X = \sum G \cdot D \]
(sum over the $G$-orbits of nonfree divisors);
this is a $G$-stable effective Cartier divisor 
on $X$.  Moreover,   
\[
\pi^*(\Delta_Y) = \sum \vert H(D) \vert \, G \cdot D
\]
in view of Theorem \ref{thm:ls} (iv).  

In turn,  (\ref{eqn:div}) gives an isomorphism of 
$G$-linearized  sheaves
\begin{equation}\label{eqn:log}
\omega_X(G \cdot \Delta_X)
\simeq \pi^*\big(\omega_Y(\Delta_Y)\big),
\end{equation}
where the dualizing sheaf $\omega_X$ is 
equipped with a $G$-linearization
via the canonical isomorphism
\begin{equation}\label{eqn:dualizing}
\omega_X \simeq \omega_{X/Y} \otimes 
\pi^*(\omega_Y).
\end{equation}
The isomorphism (\ref{eqn:log}) will be
used in the next section to determine
the ``equivariant arithmetic genus" of 
projective $G$-normal curves.  The equality
(\ref{eqn:div}) will be generalized to linearly 
reductive group schemes in Section 
\ref{sec:lrgs}.
\end{remark}

\section{Curves}
\label{sec:curves}

Throughout this section,  $G$ denotes 
a finite diagonalizable group scheme,
$Y$ a curve,  and $X$ a $G$-normal curve 
over $Y$.  Then $Y$ is regular,  the orbit 
$G \cdot x$ is an effective Cartier divisor 
on $X$ for any closed point $x \in X$, 
and the quotient
$\pi: X \to Y$ is flat l.~c.~i.~(Lemma 
\ref{lem:flatlci}).  Also,  the $G$-action on $X$ 
is generically free; thus,  the nonfree locus 
$X \setminus X_{\fr}$ 
consists of finitely many closed points.
For any such point $x$, 
the subgroup scheme $H = H(x) \subset G$ 
is cyclic (Proposition \ref{prop:fibre}). 
Moreover, we have 
$X = \bigcup_{H,\nu} U(H,\nu)$, where 
$U(H,\nu)$ is the union of $X_{\fr}$ 
and those nonfree $x$ such that 
$H(x) = H$ and $G \cdot x$ has an equation 
of weight $\nu$ in $\cO_{X, G \cdot x}$
(see Theorem \ref{thm:ls} for these
results).  So Theorem \ref{thm:can} readily 
yields a version of the Hurwitz formula:

\begin{corollary}\label{cor:curves}
There is an isomorphism
of $G$-linearized sheaves
\begin{equation}\label{eqn:curves}
\omega_{X/Y} \simeq 
\cO_X \left(\sum \, (\vert H(x) \vert -1)  
\, G \cdot x \right) 
\end{equation}
(sum over the $G$-orbits of nonfree points),
which identifies $s_{X/Y}$ with the canonical
section of the right-hand side.
\end{corollary}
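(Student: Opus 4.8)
The plan is to read off the isomorphism directly from Theorem \ref{thm:can}, so the first step is to verify its hypotheses. Since $X$ is a $G$-normal curve over the regular curve $Y$, it is in fact strongly $G$-normal: the quotient $Y = X/G$ is regular by hypothesis, the nonfree locus consists of finitely many closed points and hence is a disjoint union of prime divisors on the curve $X$, and at each such point the ideal $\cI_{G \cdot x}$ is invertible because $G$-normality forces the ideal $I_{G \cdot D}$ to be principal (Proposition \ref{prop:Gnormal}). Thus Theorem \ref{thm:can} applies, with the nonfree divisors $D$ being exactly the nonfree closed points $x$.

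Applying that theorem, the morphism $\pi$ is flat and l.c.i.---as already recorded via Lemma \ref{lem:flatlci}---so $\omega_{X/Y} = \omega_{\pi}$ is an invertible sheaf on $X$, and the canonical section $s_{X/Y}$ satisfies
\[
\div(s_{X/Y}) = \sum (\vert H(x) \vert - 1) \, G \cdot x,
\]
the sum running over the $G$-orbits of nonfree points. It then remains only to convert this equality of divisors into the claimed isomorphism of sheaves.

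For this I would invoke the standard correspondence between regular sections of a line bundle and effective Cartier divisors. As $X$ is integral and $s_{X/Y}$ is nonzero---indeed $\tr_{\pi}(s_{X/Y}) = 1$---it is a nonzerodivisor section of the invertible sheaf $\omega_{X/Y}$ and therefore determines a canonical isomorphism $\omega_{X/Y} \stackrel{\sim}{\longrightarrow} \cO_X(\div(s_{X/Y}))$ carrying $s_{X/Y}$ to the canonical section. Combined with the divisor formula above, this yields both the asserted isomorphism and the identification of $s_{X/Y}$ with the canonical section of the right-hand side. The only point requiring care---and the mild obstacle I anticipate---is the $G$-equivariance: the divisor $\sum (\vert H(x) \vert - 1) G \cdot x$ is a sum of orbits, hence $G$-stable, so $\cO_X(\div(s_{X/Y}))$ carries a canonical $G$-linearization whose canonical section is $G$-invariant, while $\omega_{X/Y}$ carries its linearization from Section \ref{sec:dualizing} with $s_{X/Y}$ likewise $G$-invariant. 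Since the isomorphism matches two generating sections of weight zero, it involves no character twist and is therefore an isomorphism of $G$-linearized sheaves.
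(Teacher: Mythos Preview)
Your proof is correct and follows the same route as the paper, which simply states that Theorem \ref{thm:can} ``readily yields'' the corollary without further elaboration. You have spelled out what the paper leaves implicit: the verification that a $G$-normal curve is automatically strongly $G$-normal, and the passage from the divisor equation $\div(s_{X/Y}) = \sum (\vert H(x)\vert - 1)\, G\cdot x$ to the sheaf isomorphism together with its $G$-linearization.
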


We now assume $X$ projective. Taking degrees in
(\ref{eqn:log}) and using the equality
$\pi^*(\pi(x)) = \vert H(x) \vert \, G \cdot x$
(which follows fromTheorem \ref{thm:ls}), 
we obtain 
\begin{equation}\label{eqn:hurwitz}
2 p_a(X) - 2 = 
\vert G \vert \, \big(2p_a(Y) - 2 
+ \sum_{y \in Y \setminus Y_{\fr}} 
( 1 - \frac{1}{n(y)} ) \, \deg(y) \big),
\end{equation}
where $n(y) = \vert H(x) \vert$
for any $y = \pi(x) \in Y \setminus Y_{\fr}$.  
This generalizes (\ref{eqn:hur}) to an arbitrary field.  

We may also determine the degree of the 
invertible sheaves $\cA_{\lambda}$
on $Y$, where $\lambda \in \Lambda$.
By Remark \ref{rem:secbis}, we have
the equality in $\Cl(Y)_{\bQ}$
\[ [\cA_{\lambda}] =
- \sum_{y \in Y \setminus Y_{\fr}}
\frac{m(y,\lambda)}{n(y)} y, \]
where $m(y,\lambda)$ denotes the unique integer 
such that $0 \leq m(y,\lambda) \leq n(y) -1$ 
and $\lambda - m(y,\lambda) \nu(y)$ restricts
trivially to $H(y) = H(x)$. 
As a consequence,
\begin{equation}\label{eqn:deg}
\deg(\cA_{\lambda}) = 
- \sum_{y \in Y \setminus Y_{\fr}}
\frac{m(y,\lambda)}{n(y)} \, \deg(y).
\end{equation}
Next, we determine the ``equivariant arithmetic 
genus'' of $X$, that is, the structure of 
the $G$-module $H^0(X,\omega_X)$. 
As $G$ is diagonalizable, 
it suffices to compute the character 
$\ch \, H^0(X,\omega_X)$, where the character 
of a finite-dimensional $G$-module 
$M = \bigoplus_{\lambda \in \Lambda} M_{\lambda}$ 
is the formal sum
$\ch(M) = \sum_{\lambda \in \Lambda} 
\dim(M_{\lambda}) e^{\lambda}$.

\begin{proposition}\label{prop:genus}
With the above notation, we have
\[ \ch \, H^0(X,\omega_X) - 1 =
\big( p_a(Y) - 1 \big) \, \ch \, \cO(G)
+ \sum_{y \in Y \setminus Y_{fr}}
\frac{\gamma(y)}{n(y)} \, 
\ch \, \cO\big( G/H(y) \big) \, \deg(y),\]
where 
$\gamma(y) = \sum_{\ell = 1}^{n(y) - 1} 
\ell e^{-\ell \nu(y)}$.
\end{proposition}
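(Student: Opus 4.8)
The plan is to transfer the computation to the base curve $Y$ by pushing forward, exploiting that $G$ is diagonalizable so that every sheaf splits into weight eigensheaves and cohomology respects this splitting. Since $\pi$ is finite, hence affine, we have $H^i(X,\omega_X) = H^i(Y,\pi_*\omega_X)$, and because the $G$-action on $Y$ is trivial the character is $\ch\,H^i(X,\omega_X) = \sum_{\mu\in\Lambda} h^i\big(Y,(\pi_*\omega_X)_\mu\big)\,e^\mu$. So it suffices to identify each eigensheaf $(\pi_*\omega_X)_\mu$ as an invertible sheaf on $Y$ and compute its cohomology by Riemann--Roch and Serre duality.

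First I would determine the eigensheaves. Starting from the $G$-linearized isomorphism $\omega_X \simeq \omega_{X/Y}\otimes\pi^*(\omega_Y)$ of \eqref{eqn:dualizing} and the projection formula, I get $\pi_*\omega_X \simeq \pi_*(\omega_{X/Y})\otimes\omega_Y$, with $\omega_Y$ in weight $0$. Since $\pi$ is flat l.c.i., $\omega_{X/Y}=\omega_\pi$ and $\pi_*\omega_\pi \simeq \cHom_{\cB}(\cA,\cB)$. Tracking weights through this duality (the linear dual of the weight-$\lambda$ piece $\cA_\lambda$ carries weight $-\lambda$) yields $(\pi_*\omega_{X/Y})_\mu \simeq \cHom_{\cB}(\cA_{-\mu},\cB) = \cA_{-\mu}^{\vee}$, and hence $(\pi_*\omega_X)_\mu \simeq \cA_{-\mu}^{\vee}\otimes\omega_Y$, an invertible sheaf of degree $-\deg(\cA_{-\mu}) + 2p_a(Y)-2$.

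Next I would assemble the equivariant Euler characteristic. Riemann--Roch on the projective regular curve $Y$ gives $\chi\big(Y,(\pi_*\omega_X)_\mu\big) = -\deg(\cA_{-\mu}) + p_a(Y) - 1$, whence $\ch\,H^0(X,\omega_X) - \ch\,H^1(X,\omega_X) = (p_a(Y)-1)\,\ch\,\cO(G) - \sum_{\mu}\deg(\cA_{-\mu})\,e^\mu$, using $\ch\,\cO(G) = \sum_{\lambda\in\Lambda}e^\lambda$. For the $H^1$ term I would invoke $G$-equivariant Serre duality on the Cohen--Macaulay projective curve $X$: $H^1(X,\omega_X)\simeq H^0(X,\cO_X)^{\vee} = k$ as a trivial $G$-module, since $X$ is geometrically integral and proper; thus $\ch\,H^1(X,\omega_X)=1$, and $\ch\,H^0(X,\omega_X)-1$ equals the Euler characteristic just computed. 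Substituting the degree formula \eqref{eqn:deg}, $\deg(\cA_{-\mu}) = -\sum_y \tfrac{m(y,-\mu)}{n(y)}\deg(y)$, turns the remaining sum into $\sum_y \tfrac{\deg(y)}{n(y)}\sum_{\mu} m(y,-\mu)\,e^\mu$.

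The hard part will be the final combinatorial identification, namely proving for each $y$ that $\sum_{\mu\in\Lambda} m(y,-\mu)\,e^\mu = \gamma(y)\,\ch\,\cO(G/H(y))$. Here I would use the coset decomposition $\Lambda = \bigsqcup_{j=0}^{n-1}(\Lambda^{H} + j\nu)$ from Proposition \ref{prop:local} (with $H=H(y)$, $\nu=\nu(y)$, $n=n(y)$), the fact that $m(y,-\mu)$ depends only on $-\mu \bmod \Lambda^{H}$ and equals $n-j$ on the coset $\Lambda^H + j\nu$ for $1\le j\le n-1$ (and $0$ for $j=0$), together with $\ch\,\cO(G/H)=\sum_{\chi\in\Lambda^H}e^\chi$. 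Reindexing by $\ell = n-j$, and using that $n\nu\in\Lambda^H$ makes $\ch\,\cO(G/H)$ invariant under multiplication by $e^{n\nu}$, so that $e^{j\nu}\,\ch\,\cO(G/H) = e^{-\ell\nu}\,\ch\,\cO(G/H)$, collapses the sum to $\big(\sum_{\ell=1}^{n-1}\ell\,e^{-\ell\nu}\big)\ch\,\cO(G/H) = \gamma(y)\,\ch\,\cO(G/H)$. Collecting terms gives the stated formula; as a consistency check, applying the augmentation $e^\lambda\mapsto 1$ recovers the numerical relation \eqref{eqn:hurwitz}.
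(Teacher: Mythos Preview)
Your argument is correct and takes a genuinely different route from the paper's proof. The paper does not push forward $\omega_X$ directly; instead it twists by the nonfree divisor and uses the logarithmic isomorphism $\omega_X(G\cdot\Delta_X)\simeq\pi^*\big(\omega_Y(\Delta_Y)\big)$ of \eqref{eqn:log}. That makes $\omega_X(G\cdot\Delta_X)$ a pullback, so its eigensheaves on $Y$ are $\omega_Y(\Delta_Y)\otimes\cA_{\lambda}$, and a long exact sequence coming from $0\to\omega_X\to\omega_X(G\cdot\Delta_X)\to\iota_*\iota^*\omega_X(G\cdot\Delta_X)\to 0$ then relates $h^0(\omega_X)_{\lambda}$ to these pieces and to the local contributions $\cO(G\cdot x)$ at the nonfree orbits; the free case is handled separately. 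By contrast, you bypass the log twist and the exact sequence entirely by using the duality description $\pi_*\omega_\pi\simeq\cHom_{\cB}(\cA,\cB)$ to identify $(\pi_*\omega_X)_\mu\simeq\cA_{-\mu}^{\vee}\otimes\omega_Y$, and then you read off the Euler characteristic in one stroke via Riemann--Roch and \eqref{eqn:deg}. Your approach is more uniform (no free/non-free case split) and slightly more economical; the paper's approach has the virtue of making the localization at each nonfree point explicit through the residue-type term $H^0(G\cdot x,\omega_X(G\cdot\Delta_X))\simeq\cO(G\cdot x)$. Your combinatorial reduction at the end, using $n\nu\in\Lambda^H$ to absorb the factor $e^{n\nu}$ into $\ch\,\cO(G/H)$, is exactly what is needed to match $\gamma(y)$.
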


\begin{proof}
For any $\lambda \in \Lambda$, we set
$h^0(\omega_X)_{\lambda} = 
\dim \big (H^0(X,\omega_X)_{\lambda} \big)$.
If $G$ acts freely on $X$, then 
$\omega_X = \pi^*(\omega_Y)$
by (\ref{eqn:log}), or directly by 
Lemma \ref{lem:cantors}. As a consequence,
we obtain isomorphisms of $G$-modules
\[ H^0(X,\omega_X) \simeq 
H^0(Y,\omega_Y \otimes \cA),
\quad 
H^1(X,\omega_X) \simeq 
H^1(Y,\omega_Y \otimes \cA) \]
by using the projection formula. Moreover,
$H^1(X,\omega_X)$ is the trivial $G$-module
$k$ by Serre duality. This yields
$h^0(\omega_X)_0 = h^0(\omega_Y) = g(Y)$
and 
$h^0(\omega_X)_{\lambda} = 
\chi(Y, \omega_Y \otimes \cA_{\lambda})$
for any nonzero $\lambda$. Since 
$\deg(\cA_{\lambda}) = 0$ (as follows 
e.g.~from Remark \ref{rem:torsor}), 
we obtain
$h^0(\omega_X)_{\lambda} = g(Y) -1$
by using the Riemann--Roch theorem.
Thus,
$\ch \, H^0(X,\omega_X) - 1 =
\big( p_a(Y) - 1 \big) \, \ch \, \cO(G)$
as desired.

So we may assume that $G$ does not act 
freely on $X$, i.e., $G \cdot \Delta_X$ and 
$\Delta_Y$ are nonzero.
Denoting by $\iota$ the closed immersion
$G \cdot \Delta_X  \to X$,
we have a short exact sequence of 
$G$-linearized sheaves
\[ 0 \longrightarrow \omega_X
\longrightarrow \omega_X(G \cdot \Delta_X)
\longrightarrow 
\iota_*\iota^*\omega_X(G \cdot \Delta_X)
\longrightarrow 0
\]
and hence a long exact sequence of $G$-modules
\[ 0 \longrightarrow H^0(X,\omega_X)
\longrightarrow 
H^0 \big(X,\omega_X(G \cdot \Delta_X) \big)
\longrightarrow 
\bigoplus 
H^0 \big(G \cdot x,\omega_X(G \cdot \Delta_X) \big) 
\]
\[ \longrightarrow H^1(X,\omega_X)
\longrightarrow 
H^1 \big(X,\omega_X(G \cdot \Delta_X) \big)
\longrightarrow 0,
\]
where the sum runs over the $G$-orbits of
nonfree points. Moreover, 
\[ H^1 \big(X,\omega_X(G \cdot \Delta_X) \big) 
= H^0 \big(X, \cO_X(-G \cdot \Delta_X) \big)^* = 0. \]
In addition, we obtain isomorphisms
\[ H^0 \big(X,\omega_X(G \cdot \Delta_X) \big)_{\lambda} 
\simeq H^0 \big(Y,\omega_Y(\Delta_Y) 
\otimes \cA_{\lambda} \big) \]
for all $\lambda$, by arguing as above.
Thus,
\[
h^0 \big(\omega_X(G \cdot \Delta_X) \big)_{\lambda} 
= p_a(Y) - 1 + 
\sum_{y \in Y \setminus Y_{\fr}}
\big( 1 - \frac{m(y,\lambda)}{n(y)} \big) 
\, \deg(y) \]
in view of (\ref{eqn:deg}) and the 
Riemann-Roch theorem again. 

We also obtain isomorphisms of $G$-modules
$H^0 \big(G \cdot x, \omega_X(G \cdot \Delta_X) \big)
\simeq \cO(G \cdot x)$
by using (\ref{eqn:log}) and the fact
that $G \cdot x$ is contained
in the fibre of $\pi$ at $y = \pi(x)$. 
Moreover,
\[ \dim \cO(G \cdot x)_{\lambda} = 
\begin{cases}
\deg(y) & \text{if $m(y,\lambda) = 0$,}\\
0 & \text{else},
\end{cases}
\]
as follows from Proposition 
\ref{prop:fibre}. In particular, 
$h^0 \big(\omega_X(G \cdot \Delta_X) \big)_0 = 
p_a(Y) -1 + \deg(\Delta_Y)$ and 
$\sum \dim \cO(G \cdot x)_0 
= \deg(\Delta_Y)$. In view of our long 
exact sequence of $G$-modules, this 
yields the equality 
\[ h^0(\omega_X)_0 = p_a(Y). \] 
For $\lambda \neq 0$, we obtain similarly
\[ h^0(\omega_X)_{\lambda} = p_a(Y) - 1 
+ \sum
\big( 1 - \frac{m(y,\lambda)}{n(y)} \big) 
\, \deg(y), \]
where the sum runs over the points 
$y \in Y \setminus Y_{\fr}$ such that 
$m(y,\lambda) \neq 0$.
Translating the two latter equalities in 
terms of characters yields the assertion.
\end{proof}

The above proposition expresses the 
equivariant arithmetic genus as the sum
of a ``main term'' associated with a free
action, and of ``correcting terms'' 
associated with the nonfree points. 
Of course, it gives back the Hurwitz
formula (\ref{eqn:hurwitz}) by specializing 
characters to dimensions. But the latter 
formula has a much more direct proof.

Next, we obtain detailed information on
the local structure of the $G$-normal curve
$X$ in the case where $G$ is infinitesimal;
equivalently,  we have
$G \simeq \bfmu_{p^r}$ by 
\cite[Lem.~5.3]{Br24}.
(The general case reduces to this in view
of the factorization (\ref{eqn:quotients})
$X \stackrel{\varphi}{\longrightarrow}
Z = X/G^0 \stackrel{\psi}{\longrightarrow} Y$,
where $\psi$ is a finite abelian cover). 
We identify $G$ with $\bfmu_{p^r}$,
and hence the character group $\Lambda$
with $\bZ/p^r \bZ$. So the subgroup schemes 
of $G$ are exactly the $\bfmu_{p^s}$ for 
$s = 0,\ldots,r$.

Choose a nonfree point $x \in X$ and let
$A = \cO_{X,x}$,  $y = \pi(x)$ and 
$B = \cO_{Y,y}$ as in Section \ref{sec:lcs}; 
then $H(x) = \bfmu_{p^s}$ for 
some positive integer $s = s(x) \leq r$. 
We may now state a refinement of 
Proposition \ref{prop:local}:

\begin{proposition}\label{prop:localcurves}
With the above notation, we have an isomorphism
of $\bZ/p^r \bZ$-graded $B$-algebras
\[ 
A \simeq B[T_1,T_2]/
(T_1^{p^{r - s}} - u, 
T_2^{p^s} - t \, T_1^{\nu})
\]
where $T_1$ (resp.~$T_2$) is homogeneous
of weight $p^s$ (resp.~$\nu$ prime to $p$), 
$u \in B$ is a unit, and $t \in B$ generates
the maximal ideal.
\end{proposition}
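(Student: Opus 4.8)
The strategy is to start from the structure already obtained in Proposition~\ref{prop:local} for the cyclic subgroup scheme $H = H(x) = \bfmu_{p^s}$ and to understand the remaining $G/H = \bfmu_{p^r}/\bfmu_{p^s} \simeq \bfmu_{p^{r-s}}$ action via the torsor factorization. By Proposition~\ref{prop:local} we may write $A = \bigoplus_{m=0}^{p^s - 1} A^H f^m$, where $f$ is a homogeneous generator of the largest $\Lambda$-graded ideal $I$, of some weight $\nu$ that restricts to a generator of $\Lambda(H) = \bZ/p^s\bZ$; in particular $\nu$ is prime to $p$, and $f^{p^s} = g$ with $gA = \fn A$. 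Since $A$ is a one-dimensional local (rather than semi-local) ring here---$x$ is a single closed point, so $G \cdot x$ has $\pi\vert_D$ birational and $\kappa(x) = \kappa(y)$---the quotient $A^H = \cO_{Z,z}$ is itself a discrete valuation ring, and $g$ is a uniformizer of $A^H$ up to a unit.

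First I would analyze $A^H$. The morphism $\psi : Z = X/H \to Y$ is a $G/H$-torsor in a neighborhood of $y$ (Corollary~\ref{cor:factorization}(ii)), so by Remark~\ref{rem:torsor} applied to $\bfmu_{p^{r-s}}$ we have $A^H \simeq B[T_1]/(T_1^{p^{r-s}} - u)$ for an invertible element $u \in B$, with $T_1$ homogeneous of weight the defining character of $G/H = \bfmu_{p^{r-s}}$. That character is $\Lambda^H = p^s\bZ/p^r\bZ$ inside $\Lambda$, so $T_1$ has weight $p^s$; this is where the exponent $p^s$ on $T_1$ and its weight $p^s$ both enter. The fact that $u$ is a unit (rather than a non-unit) is exactly the torsor condition: $\psi$ is \'etale-locally trivial near $y$, so the relevant section $\sigma_{1,p^{r-s}}$ trivializes the relevant line, matching the ``trivializing section'' case of Remark~\ref{rem:torsor}.

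Next I would describe $A$ over $A^H$. Setting $T_2 = f$, Proposition~\ref{prop:local}(ii) gives $A = A^H[T_2]/(T_2^{p^s} - g)$. It remains to rewrite $g \in A^H$ in the stated normal form $g = t\,T_1^{\nu}$, where $t \in B$ is a uniformizer. The key relation is $gA = \fn A$, i.e.\ $g$ is, up to a unit, the image in $A^H$ of a uniformizer $t$ of $B$. I would expand $g$ in the $A^H = \bigoplus_j B T_1^j$ basis and use its weight: $g = f^{p^s}$ has weight $p^s \nu$, and $p^s \nu \bmod p^r$ determines the unique monomial $T_1^{\nu}$ (noting $T_1$ has weight $p^s$, so $T_1^{\nu}$ has weight $p^s\nu$) that can appear homogeneously. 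Comparing weights forces $g$ to be a $B$-multiple of $T_1^{\nu}$, and the condition $gA^H = \fn A^H = tA^H$ together with $T_1$ being a unit in $A^H$ (since $T_1^{p^{r-s}} = u$ is a unit) forces the $B$-coefficient to be a uniformizer $t$ of $B$, up to a unit that may be absorbed by rescaling $T_2$.

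\textbf{Main obstacle.}
The delicate point I expect is the weight bookkeeping that pins down $g = t\,T_1^{\nu}$ \emph{exactly}, rather than merely up to an element of higher order or up to the wrong power of $T_1$. One must verify that the weight $p^s\nu$ of $g$ singles out a \emph{single} admissible exponent of $T_1$ in the range $0 \le j \le p^{r-s}-1$, which requires that $\nu$ be the correct representative and that the identification $\Lambda = \Lambda^H \oplus \bZ/p^s\bZ\cdot\nu$ from Proposition~\ref{prop:local} be used carefully; a secondary subtlety is checking that the unit ambiguities in $f$, $T_1$, and $g$ can be simultaneously normalized so that $u$ is a unit and $t$ is \emph{precisely} a generator of $\fn$, without reintroducing cross terms. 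Once the grading is tracked correctly, the two defining relations and their weights fall out, and the isomorphism of $\bZ/p^r\bZ$-graded $B$-algebras follows by comparing the free $B$-bases $\{T_1^j T_2^m\}$ on both sides.
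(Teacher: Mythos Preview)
Your proposal is correct and follows essentially the same route as the paper: obtain $A^H \simeq B[T_1]/(T_1^{p^{r-s}} - u)$ from the $G/H$-torsor description (Corollary~\ref{cor:factorization} and Remark~\ref{rem:torsor}), combine with $A \simeq A^H[T_2]/(T_2^{p^s} - g)$ from Proposition~\ref{prop:local}, and then use that $g$ is homogeneous of weight $p^s\nu$ together with $gA = \fn A$ and the invertibility of $T_1$ to force $g = t\,T_1^{\nu}$ with $t$ a uniformizer of $B$. One caveat: your side remark that $\kappa(x) = \kappa(y)$ is not true in general (Remark~\ref{rem:localcurves} exhibits counterexamples when $\kappa(y)$ is imperfect), but you never actually use this claim, so the argument stands.
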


\begin{proof}
By Corollary \ref{cor:factorization}, 
the morphism $X/H \to Y$
is a torsor under $G/H = \bfmu_{p^{r-s}}$
in a neighborhood of $y$. Together with
Remark \ref{rem:torsor}, this yields 
an isomorphism of $\bZ/p^r\bZ$-graded 
$B$-algebras
\[ A^H \simeq B[T_1]/
(T_1^{p^{r - s}} - u), \]
where $T_1$ is homogeneous of weight
$p^s$ and $u \in B^{\times}$.
We also have an isomorphism of 
$\bZ/p^r \bZ$-graded $B$-algebras
\[ A \simeq A^H[T_2]/(T_2^{p^s} - g), \]
where $T_2$ is homogeneous of weight
$\nu$ prime to $p$, and $g \in A^H$
is homogeneous of weight $p^s \nu$;
moreover, $g A = \fn A$ (Proposition
\ref{prop:local}). Thus, 
$g =  t \, T_1^{\nu}$, where $t \in B$ 
generates $\fn$. Combining both displayed 
isomorphisms yields the statement.
\end{proof}

\begin{remark}\label{rem:localcurves}
The above proposition takes a much simpler 
form in the case where $r = s$,  i.e., 
$x$ is a $G$-fixed point: then 
$A \simeq B[T]/(T^{p^r} - t)$
for a uniformizer $t \in B$.  As a consequence,
the homomorphism $B/Bt \to A/A f$
is bijective,  where $f$ denotes the image 
of $T$ in $A$.  
Equivalently, $X$ is regular at $x$ 
and $\kappa(x) = B/Bt = \kappa(y)$. This yields 
a refinement of \cite[Prop.~5.5]{Br24}.

On the other hand, if $r > s$ then we have
with the notation of Section \ref{sec:lcs}:
\[ 
\bar{A} = A/tA \simeq 
\kappa(y)[\bar{T_1},\bar{T_2}]/
(\bar{T_1}^{p^{r-s}} - \bar{u},\bar{T_2}^{p^s}),
\]
where $\bar{u} \in \kappa(y)^{\times}$ denotes 
the image of $u \in A^{\times}$.
If in addition $k$ is perfect, then so is
$\kappa(y)$ (as the extension $\kappa(y)/k$ is finite),  
and hence we obtain
\[ \bar{A}   \simeq 
\kappa(y)[U_1,U_2]/(U_1^{p^{r-s}},U_2^{p^s}).  \]
Thus,  $U_1, U_2$ form a minimal system of
generators of the ideal $\bar{\fm}$.  So 
$X$ is not regular at $x$, and $\kappa(x) = \kappa(y)$.
But if $\kappa(y)$ is imperfect, then we may choose
$u \in A^{\times}$ such that 
$\bar{u} \notin \kappa(y)^p$, and hence
$A/A f \simeq \kappa(y)[T]/(T^{p^{r-s}} - \bar{u})$
is a field, where $f$ denotes the image of 
$T_2$ in $A$.  In that case, $X$ is regular
but not smooth at $x$, and 
$\kappa(y) \subsetneq \kappa(x)$; the morphism
$\Spec \big(\kappa(x) \big) \to 
\Spec \big(\kappa(y) \big)$ 
is the torsor under $(\bfmu_{p^{r-s}})_{\kappa(y)}$ 
considered in Proposition \ref{prop:fibre}.
\end{remark}

Next, let $s$, $\nu$ be integers such that
$1 \leq s \leq r$ and 
$0 \leq \nu \leq p^r -1$, and let
$U = U(s,\nu)$ be the open subset of $X$
consisting of the free locus $X_{\fr}$
together with the closed points $x \in X$ 
such that $H(x) = \bfmu_{p^s}$ and 
the largest $\bZ/p^r \bZ$-graded ideal
of $\cO_{X,x}$ is generated by a homogeneous
element of weight $\nu$.
Then $V= V(s,\nu) = U/G$ is an open subset
of $Y$ containing $Y_{\fr}$, and  
$\Delta = \Delta(s,\nu) = V \setminus V_{\fr}$
is a reduced effective divisor on $V$.
With the notation of the beginning of 
Section \ref{sec:lcsc}, we have $U = U(H,\nu)$ 
and $\Delta = \Delta(H,\nu)$, where 
$H = \bfmu_{p^s}$. 
So the multiplication in $\cA \vert_V$ yields 
isomorphisms of $\cB \vert_V$-modules
\[
\sigma = \sigma_{\nu,p^s} : 
\cA_{\nu}^{ \otimes p^s} \vert_V
\stackrel{\sim}{\longrightarrow}
\cA_{\nu p^s}\vert_V (-\Delta),
\quad
\tau = \sigma_{p^s \nu,p^{r-s}} : 
\cA_{p^s}^{\otimes p^{r - s}} \vert_V
\stackrel{\sim}{\longrightarrow}
\cB \vert_V,
\]
since $\cA_{p^s} \subset \cA^H$ and the morphism
$U/H = \Spec_{\cB \vert_V}(\cA^H) \to V$ 
is a $G/H$-torsor (Theorem \ref{thm:ls}). 
We may now state:

\begin{proposition}\label{prop:curves}
There is an isomorphism of 
$\bZ/p^r \bZ$-graded $\cB \vert_V$-algebras
\[
\cA \vert_V \simeq 
\Sym_{\cB \vert_V}(\cA_{\nu} \oplus \cA_{p^s})/\cI,
\]
where $\cI$ denotes the ideal generated by the 
$a^{p^s} - \sigma(a^{\otimes p^s})$ 
where $a \in \cA_{\nu}$, and the
$b^{p^{r-s}} - \tau(b^{\otimes p^{r-s}})$ 
where $b \in \cA_{p^s}$.
\end{proposition}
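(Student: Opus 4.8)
The plan is to assemble the desired presentation from the two descriptions already established in Section \ref{sec:lcsc}, specialized to the case $G = \bfmu_{p^r}$, $H = \bfmu_{p^s}$ and $G/H = \bfmu_{p^{r-s}}$. First I would apply Theorem \ref{thm:ls}(ii) to $U = U(s,\nu) = U(H,\nu)$. Since $\vert H \vert = p^s$, this produces a canonical homomorphism of $\bZ/p^r\bZ$-graded $\cB$-algebras
\[
\Phi : \big( \cA^H \otimes_{\cB} \Sym_{\cB}(\cA_{\nu}) \big)/\cI' \longrightarrow \cA
\]
restricting to an isomorphism over $V = V(s,\nu)$, where $\cI'$ denotes the ideal generated by the $a^{p^s} - \sigma(a^{\otimes p^s})$ with $a \in \cA_{\nu}$ and $\sigma = \sigma_{\nu,p^s}$. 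Thus over $V$ the algebra $\cA$ is presented as an extension of its subalgebra $\cA^H$ by the single family of cyclic relations in the variable coming from $\cA_{\nu}$.

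Next I would describe $\cA^H \vert_V$ itself. By Theorem \ref{thm:ls}(ii) the morphism $\psi_U : U/H \to V$ is a $G/H$-torsor, and $G/H \simeq \bfmu_{p^{r-s}}$ has character group $\Lambda^H = p^s\bZ/p^r\bZ$, whose generator $p^s$ is precisely the weight of the eigensheaf $\cA_{p^s}$. Applying Remark \ref{rem:torsor} to this $\bfmu_{p^{r-s}}$-torsor, with $\cA_{p^s}$ playing the role of $\cA_1$ and $\tau$ (introduced just before the statement) the trivializing isomorphism, yields an isomorphism of $\bZ/p^r\bZ$-graded $\cB\vert_V$-algebras
\[
\cA^H \vert_V \simeq \Sym_{\cB\vert_V}(\cA_{p^s})/\cJ,
\]
where $\cJ$ is generated by the $b^{p^{r-s}} - \tau(b^{\otimes p^{r-s}})$ with $b \in \cA_{p^s}$.

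Finally I would combine the two. Substituting the presentation of $\cA^H\vert_V$ into the one for $\cA\vert_V$ and using the canonical identification $\Sym_{\cB}(\cA_{p^s}) \otimes_{\cB} \Sym_{\cB}(\cA_{\nu}) \simeq \Sym_{\cB}(\cA_{\nu} \oplus \cA_{p^s})$, the two relation families become the generators of the single ideal $\cI$ of the statement, so that $\cI = \cI' + \cJ$ after extension of scalars, and $\Phi$ identifies the resulting quotient with $\cA\vert_V$.

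The one point that requires care — and which I expect to be the only genuine obstacle — is the weight bookkeeping inside the relation $a^{p^s} - \sigma(a^{\otimes p^s})$. Here $\sigma(a^{\otimes p^s})$ is a section of $\cA_{p^s\nu}$, and since $p^s\nu$ is a multiple of $p^s$ it lies in $\Lambda^H$, so that $\sigma(a^{\otimes p^s}) \in \cA^H$; via the torsor presentation above it therefore corresponds over $V$ to a degree-$\nu$ element of $\Sym_{\cB\vert_V}(\cA_{p^s})$ modulo $\cJ$. I would check that choosing such a lift is harmless, since any two lifts differ by a section of $\cJ \subset \cI$, so that the relation is well defined in $\Sym_{\cB\vert_V}(\cA_{\nu} \oplus \cA_{p^s})$ and agrees with the one in the statement. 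Granting this compatibility, the composite of the two isomorphisms restricts to the asserted isomorphism over $V$.
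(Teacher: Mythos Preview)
Your proposal is correct and follows exactly the approach of the paper: first use Theorem \ref{thm:ls}(ii) to present $\cA\vert_V$ over $\cA^H\vert_V$ via the cyclic relations in $\cA_\nu$, then use Remark \ref{rem:torsor} for the $G/H$-torsor $\psi_U$ to present $\cA^H\vert_V$ as $\Sym_{\cB\vert_V}(\cA_{p^s})/\cJ$, and finally combine. Your extra paragraph on the lifting of $\sigma(a^{\otimes p^s})$ into $\Sym_{\cB\vert_V}(\cA_{p^s})$ is a legitimate clarification the paper leaves implicit; the argument is sound.
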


\begin{proof}
Using the fact that 
$\Spec_{\cB \vert_V}(\cA^H) \to V$ 
is a $G/H$-torsor together with Remark
\ref{rem:torsor},  we obtain an isomorphism
\[ 
\cA^H \vert_V \simeq  
\Sym_{\cB \vert_V}(\cA_{p^s})/\cJ,
\]
where $\cJ$ denotes the ideal generated
by the 
$b^{p^{r-s}} - \tau(b^{\otimes p^{r-s}})$ 
for $b \in \cA_{p^s}$.
The assertion follows by combining 
this isomorphism with that of Theorem 
\ref{thm:ls} (ii).
\end{proof}

The above proposition takes again 
a much simpler form in the case where 
$r = s$: then we just have an isomorphism 
of $\cB \vert_V$-modules
$\sigma : \cA_{\nu}^{\otimes p^r}\vert_V
\stackrel{\sim}{\longrightarrow}
\cO_V(-\Delta)$,
and the algebra 
$\cA\vert_V $ is the quotient of 
$\Sym_{\cB \vert_V}(\cA_{\nu})$
by the ideal generated by the 
$a^{p^r} - \sigma(a^{\otimes p^r})$.
Moreover, $U$ is regular along $U^H$ 
by Remark \ref{rem:localcurves}.

Finally, we consider the tangent sheaf 
$\cT_X$ consisting of the $k$-linear 
derivations of the structure sheaf 
$\cO_X$, and its relative version
$\cT_{X/Y}$ consisting of the  
$\cO_Y$-linear derivations. Recall that
$\cT_X = \cHom_{\cO_X}(\Omega^1_X,\cO_X)$
is equipped with a $G$-linearization,
since so is $\Omega^1_X$; likewise,
$\cT_{X/Y}$ is $G$-linearized as well.
As a consequence, for any $x \in X$,
the fibre $\cT_X(x)$ is a linear 
representation of $H(x)$.
By \cite[Prop.~5.4]{Br24}, the sheaf
$\cT_X$ is invertible; in particular,
its fibre at $x$ is a vector space of 
dimension $1$ over the residue field 
$\kappa(x)$, for any $x$ as above. 
We will recover the latter result in 
a more explicit way:

\begin{corollary}\label{cor:tangent}
With the above notation, we have 
$\cT_U = \cT_{U/V}$. Moreover, $\cT_U(x)$ 
is a $\kappa(x)$-vector space of dimension $1$ 
on which $H(x)$ acts with weight $-\nu$, 
for any nonfree point $x \in U$.
\end{corollary}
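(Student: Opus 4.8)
The plan is to reduce everything to the explicit local models of this section and then to deduce $\cT_U=\cT_{U/V}$ from a rank count, reading off the weight from the relative tangent sheaf. Throughout I would work locally at a point of $U$, using Proposition \ref{prop:localcurves} at a nonfree point and the torsor presentation of Remark \ref{rem:torsor} at a free point; since both $\cT_U$ and $\cT_{U/V}$ commute with localization, this suffices.

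First I would compute $\cT_{U/V}=\Der_{\cO_V}(\cO_U)$ at a nonfree point $x$. Writing $A=\cO_{U,x}\simeq B[T_1,T_2]/(T_1^{p^{r-s}}-u,\,T_2^{p^s}-t\,T_1^{\nu})$ as in Proposition \ref{prop:localcurves}, any $B$-linear derivation $\delta$ kills $t$ and $u$; applying $\delta$ to the relation $T_2^{p^s}=t\,T_1^{\nu}$ and using $p^s\equiv 0$ gives $0=t\nu T_1^{\nu-1}\delta(T_1)$, whence $\delta(T_1)=0$ because $\nu$ is prime to $p$, $T_1$ is a unit, and $A$ is a domain. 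Thus $\delta$ is determined by $\delta(T_2)$, so $\cT_{U/V}=A\,\partial_{T_2}$ is free of rank $1$; a similar computation at a free point (where $A\simeq B[z]/(z^{p^r}-b_0)$) gives $\cT_{U/V}=A\,\partial_z$. Since $T_2$ is homogeneous of weight $\nu$, its dual $\partial_{T_2}$ is homogeneous of weight $-\nu$, so $\cT_{U/V}$ is invertible and $H(x)=\bfmu_{p^s}$ acts on its fibre at $x$ through the character $-\nu$, which generates $\Lambda(H(x))$.

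To obtain $\cT_U=\cT_{U/V}$ I would dualize the cotangent sequence $\pi_U^{*}\Omega^1_V\to\Omega^1_U\to\Omega^1_{U/V}\to 0$ into $\cO_U$; as $V$ is regular, $\Omega^1_V$ is locally free and this yields a left-exact sequence $0\to\cT_{U/V}\to\cT_U\to\pi_U^{*}\cT_V$. The cokernel $\cT_U/\cT_{U/V}$ therefore embeds into $\pi_U^{*}\cT_V$, which is invertible, hence torsion-free. On the other hand $\cT_U$ is invertible by \cite[Prop.~5.4]{Br24} and $\cT_{U/V}$ is invertible by the previous step, so this cokernel has generic rank $1-1=0$; being a torsion subsheaf of a torsion-free sheaf it vanishes, giving $\cT_U=\cT_{U/V}$. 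In particular $\cT_U(x)=\cT_{U/V}(x)$ is one-dimensional over $\kappa(x)$ with the $H(x)$-weight $-\nu$ computed above, which is exactly the asserted description and makes the one-dimensionality of the fibre explicit.

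I expect the first step to be the crux: one must verify that the torsion contribution of $dT_1$ to $\Omega^1_{U/V}$ drops out after applying $\cHom(-,\cO_U)$, so that $\cT_{U/V}$ is genuinely invertible with generator $\partial_{T_2}$, and that this holds uniformly over free and nonfree points. Once invertibility of $\cT_{U/V}$ is in hand, the rank argument of the third step is purely formal; being intrinsic, it is insensitive to whether $\kappa(y)/k$ is separable, so no separate bookkeeping of $k$-derivations of the residue field is required.
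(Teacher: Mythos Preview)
Your argument is correct, but your route to $\cT_U=\cT_{U/V}$ differs from the paper's. The paper observes directly that $k(X)^G=k(Y)$ is generated over $k$ by $k(X)^{p^r}$ (\cite[Prop.~5.1]{Br24}), so any $k$-derivation of $\cO(U)$ automatically kills $\cO(V)$; together with the trivial inclusion $\cT_{U/V}\subset\cT_U$ this gives equality in one line. Your approach instead proves invertibility of $\cT_{U/V}$ first and then uses the dualized cotangent sequence plus a rank-zero/torsion-free squeeze, feeding in the invertibility of $\cT_U$ from \cite[Prop.~5.4]{Br24} as a black box. This is valid, but note that the paper frames the corollary as \emph{recovering} the invertibility of $\cT_U$ ``in a more explicit way''; your argument reverses that logical flow. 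For the weight computation the two proofs are essentially the same: the paper writes down $\Omega^1_{A/B}$ from the presentation of Proposition~\ref{prop:localcurves} and dualizes to get $\Der_B(A)=A\,\partial_2$, while you compute $\Der_B(A)$ directly---either way one sees $\partial_{T_2}$ is a free generator of weight $-\nu$.
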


\begin{proof}
Since $U$ is affine, the fraction field
of $\cO(U)$ is the function field $k(X)$. 
Moreover, the invariant subfield
$k(X)^G$ is generated by $k$ and 
$k(X)^{p^r}$ (see 
e.g.~\cite[Prop.~5.1]{Br24}). Thus, 
every $k$-linear derivation of $\cO(U)$
vanishes on $\cO(U)^G = \cO(V)$.
This shows the first assertion.

For the second assertion, we use the 
structure of the $\bZ/p^r\bZ$-graded
$B$-algebra $A$ (Proposition 
\ref{prop:curves}), which gives
an isomorphism of graded $A$-modules
\[ \Omega^1_{A/B} \simeq
(A \, dT_1 \oplus A \, dT_2)
/(A \, t \, T_1^{\nu-1} dT_1)
\]
with the notation of this proposition.
Taking $A$-duals, we obtain an 
isomorphism of graded $A$-modules
$\Der_B(A) \simeq A \, \partial_2$,
where $\partial_2$ arises from the
derivation $\partial/\partial T_2$
of $B[T_1,T_2]$. As $T_2$ has weight
$\nu$, this yields the result.
\end{proof}

For instance, if $x \in X(k)$ is
$G$-fixed, then it is a smooth point
of $X$ (e.g.~by Remark 
\ref{rem:localcurves}) and hence the 
Zariski tangent space $T_x(X)$ is a 
line on which $G$ acts with weight $-\nu$.

\begin{remark}\label{rem:tangent}
The degree of the tangent sheaf satisfies 
\[ 
\deg(\cT_X) = \vert G \vert
\sum_{y \in Y \setminus Y_{\fr}}
\frac{\deg(y)}{n(y)} = 
\sum_{y \in Y \setminus Y_{\fr}}
p^{r - s(y)} \, \deg(y). \] 
Indeed, $\cT_X$ is equipped with
a $G$-invariant global section: the generator
$\xi$ of the Lie algebra 
$\Lie(G) = \Lie(\bfmu_{p^r}) = \Lie(\bG_m) = k$.
Clearly, the divisor of $\xi$ is 
supported at the nonfree points. 
Given such a point $x$, we have
$\xi(T_1) = 0$ and $\xi(T_2) = \nu T_2$
with the notation of Proposition 
\ref{prop:localcurves}; thus,
$\xi = \nu \partial_2$. 
As a consequence, the ideal of zeroes of $\xi$
in $A$ equals $A T_2$. Since
\[ A/A T_2 = 
B[T_1]/(T_1^{p^{r-s}} -u, t T_1^{\nu}) 
= B[T_1]/(T_1^{p^{r-s}} -u, t) 
= \kappa(y)[T_1]/(T_1^{p^{r-s}} -u(y)), \]
we obtain
$\dim_{\kappa(y)}(A/A T_2) = p^{r - s} \deg(y)$.
This implies our formula.

In particular,  we have 
$\deg(\cT_X) \geq 0$, and equality holds
if and only if $X$ is a $G$-torsor over $Y$.
Also, note that the fixed point subscheme
$X^G$ is reduced and satisfies
$\deg(\cT_X) \equiv \deg(X^G)$ (mod $p$).
\end{remark}

\section{Linearly reductive group schemes}
\label{sec:lrgs}

Throughout this section, we denote by
$G$ a finite linearly reductive group
scheme, and by $X$ a $G$-normal variety
on which $G$ acts faithfully with quotient
$\pi : X \to Y$.  Our aim is to determine the 
relative dualizing sheaf $\omega_{\pi}$
as a $G$-linearized sheaf.  Using 
Lemmas \ref{lem:canS2} and \ref{lem:flatlci},
we may assume that $Y$ is regular
and $\pi$ is flat l.~c.~i.  Then 
$\omega_{\pi} = \omega_{X/Y}$
is invertible and has a canonical $G$-invariant 
section $s_{X/Y}$,  which trivializes it over 
the free locus and whose formation commutes
with flat base change on $Y$.

\begin{theorem}\label{thm:linred}
With the above assumptions,  we have
$\div(s_{X/Y}) = \pi^*(\Delta_Y) - G \cdot \Delta_X$.
\end{theorem}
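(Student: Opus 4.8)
The plan is to reduce to the diagonalizable case already settled in Theorem \ref{thm:can} by a descent along a separable base change, to factor $\pi$ through the quotient by $G^0$, and then to combine the diagonalizable computation with the tame Hurwitz formula for the \'etale quotient. Throughout I use the reductions granted in the statement: $Y$ is regular and $\pi$ is flat l.c.i., so that $\omega_\pi = \omega_{X/Y}$ is invertible and carries the $G$-invariant section $s_{X/Y}$, whose formation commutes with flat base change on $Y$.

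First I would pass to a field extension. Both sides of the asserted identity are Cartier divisors on $X$, so it suffices to prove equality after pulling back along the faithfully flat morphism $X_K \to X$ for a finite Galois extension $K/k$ with $G^0_K$ diagonalizable and $\pi_0(G)_K$ constant; such $K$ exists since $G$ is linearly reductive. As $K/k$ is separable, the free loci $X_{\fr}$ and $Y_{\fr}$, hence the reduced divisors $\Delta_X$ and $\Delta_Y$, commute with this base change, as does $s_{X/Y}$. So the statement is unchanged, and I may assume from the outset that $G^0$ is diagonalizable and that $\Gamma := \pi_0(G)(k)$ is constant of order prime to $p$ (Remark \ref{rem:linred}).

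Next I would factor $\pi$ as $X \stackrel{\varphi}{\to} Z = X/G^0 \stackrel{\psi}{\to} Y$ and invoke the multiplicativity (\ref{eqn:adj}) of the invariant section along this normal factorization, giving $\div(s_{X/Y}) = \div(s_{\varphi}) + \varphi^*\div(s_{\psi})$. For $\varphi$ the group $G^0$ is diagonalizable, so Theorem \ref{thm:can} in the form (\ref{eqn:div}) applies and yields $\div(s_{\varphi}) = \varphi^*(B_{\varphi}) - R_{\varphi}$, where $B_{\varphi} \subset Z$ and $R_{\varphi} \subset X$ are the reduced branch and ramification divisors of $\varphi$. For $\psi$ the group $\Gamma$ is constant of order prime to $p$, so $\psi$ is a tamely ramified Galois cover; by the classical theory of the tame different (equivalently, by the local $\bfmu_e$-computation in the proof of Theorem \ref{thm:can} after the flat base change to the completion of $\cO_{Y,E}$, which splits $\psi$ into totally ramified cyclic pieces) the Reynolds section satisfies $\div(s_{\psi}) = \psi^*(B_{\psi}) - R_{\psi} = \sum_{E'} (e(E') - 1)\,[E']$, the sum over prime divisors $E' \subset Z$ with tame ramification index $e(E')$.

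Finally I would match orders along each prime divisor $D$ of $X$. Since $\varphi$ is a universal homeomorphism, $D$ determines $E' = \varphi(D) \subset Z$ and $E = \psi(E') \subset Y$; write $n_0 = \vert H(D)\vert$ for the $G^0$-inertia, so that $\varphi^*[E'] = n_0[D]$ (Proposition \ref{prop:local}), and $e = e(E')$ for the tame index, so that $\pi^*[E] = e\,n_0\,[D]$. Combining the three displays gives $\ord_D(s_{X/Y}) = (n_0 - 1) + (e-1)n_0 = e\,n_0 - 1$, while the right-hand side contributes $\ord_D(\pi^*(\Delta_Y)) - \ord_D(G\cdot\Delta_X) = e\,n_0 - 1$ whenever $e\,n_0 > 1$, with both sides vanishing when $e\,n_0 = 1$, i.e.\ when $D$ is free (Lemma \ref{lem:fact}). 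As the two Cartier divisors agree at every $D$, they coincide, and the identity descends from $K$ to $k$. The main obstacle is exactly this last bookkeeping: at a divisor where both $G^0$ and $\Gamma$ ramify ($n_0 > 1$ and $e > 1$) the two mechanisms interact, and one must verify that the additive contributions $(n_0 - 1)$ from $s_{\varphi}$ and $(e-1)n_0$ from $\varphi^*s_{\psi}$ recombine into $e\,n_0 - 1$, matching the reduced divisor $G\cdot\Delta_X$ against the full ramified pullback $\pi^*(\Delta_Y)$; the order computation above is precisely what reconciles them.
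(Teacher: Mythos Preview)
Your strategy coincides with the paper's: reduce by a separable extension to $G^0$ diagonalizable and $\pi_0(G)$ constant, factor $\pi$ through $Z=X/G^0$, apply (\ref{eqn:div}) to $\varphi$, a tame Hurwitz formula to $\psi$, and combine via (\ref{eqn:adj}); the multiplicity $e\,n_0-1$ you obtain is exactly the paper's $\vert H(x)\vert\,\vert C_{\pi_0(G)}(z)\vert-1$.

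Two steps, however, are not justified as written. First, your argument for $\div(s_\psi)=\sum_{E'}(e(E')-1)[E']$ is incomplete: completing $\cO_{Y,E}$ does \emph{not} split $\psi$ into totally ramified cyclic pieces, since residue extensions may persist and the inertia group is a priori $\bZ/e\bZ$, not $\bfmu_e$. One needs a further \'etale base change (splitting the residue field and adjoining $e$th roots of unity) before the local computation of Theorem~\ref{thm:can} applies. The paper avoids this by treating the constant case intrinsically via Lemma~\ref{lem:const}: it factors through $X/C_G(x)$ and $X/N_G(x)$, shows the second morphism is \'etale at the relevant point, and then invokes Theorem~\ref{thm:can} for the cyclic group $C_G(x)$.

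Second, you assert $\ord_D(G\cdot\Delta_X)=1$ without comment. A priori $G\cdot\Delta_X$ is the schematic image of $G\times\Delta_X\to X$, and with $G$ non-reduced this need not equal the reduced $\Delta_X$. What one must check is that near the generic point $x$ of $D$ the subscheme $G\cdot D$ coincides with $G^0\cdot D$, whose local equation $f$ generates $\fm_x$ by Proposition~\ref{prop:local}. The paper's final paragraph supplies precisely this: after a purely inseparable extension one has $G_K=\coprod_{g\in G(K)}G^0_K\,g$, and the translates $G^0_K\cdot g\cdot D_K$ with $g\cdot D_K\neq D_K$ avoid a neighborhood of $x$.
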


\begin{proof}
We start with a reduction to the case where 
\emph{$G^0$ is diagonalizable and $\pi_0(G)$
is constant}.
As in Remark \ref{rem:linred},  we may choose
a finite Galois extension $K/k$ such that
$G^0_K$ is diagonalizable and $\pi_0(G)_K$
is constant.  Then the $G_K$-variety $X_K$
satisfies our assumptions; moreover,
$\Delta_{Y_K} = (\Delta_Y)_K$
as the base change to $K$ preserves
the free locus and reducedness,  and
$(G \cdot \Delta_X)_K = G_K \cdot (\Delta_X)_K$
as the formation of the schematic image commutes
with flat base change.  By Galois descent, 
it thus suffices to prove the theorem for
the $G_K$-variety $X_K$.

If $G = G^0$ then the desired assertion
follows from Remark \ref{rem:can}.
We now prove this assertion when
$G = \pi_0(G)$,  i.e.,  \emph{$G$ is constant}.
Since $G$ is linearly reductive,  its order is
prime to $p$; also,  recall that $X$ is normal.

We will need a folklore result 
for which we could not find any reference.
Consider a point  $x \in X$ with centralizer 
$C_G(x)$.  Then the quotient morphism 
$\pi : X \to Y$ factors as 
\[ X \stackrel{\varphi}{\longrightarrow}
Z = X/C_G(x) \stackrel{\psi}{\longrightarrow}
Y = X/G, \quad x \longmapsto z \longmapsto y. \]
In turn,  $\psi$ factors as 
\[ Z \stackrel{\tau}{\longrightarrow}
W = X/N_G(x) \stackrel{\eta}{\longrightarrow}
Y, \quad 
z \longmapsto w \longmapsto y, \]
where the normalizer $N_G(x)$ is the largest
subgroup of $G$ that stabilizes $x$, 
and $\tau$ is the quotient by $N_G(x)/C_G(x)$.

\begin{lemma}\label{lem:const}
With the above notation,  we have:

\begin{enumerate}

\item[{\rm (i)}] $\kappa(z) = \kappa(x)$.

\item[{\rm (ii)}] $\tau$ is a torsor under 
$N_G(x)/C_G(x)$ in a neighborhood of $z$.

\item[{\rm (iii)}]
$\eta$ induces an isomorphism
of completed local rings
$\widehat{\cO}_{Y,y} \simeq
\widehat{\cO}_{W,w}$.

\item[{\rm (iv)}]
$\psi$ is \'etale at $z$ and $s_{\psi}$
is a trivializing section of $\omega_{\psi}$ 
at that point.

\item[{\rm (v)}]
If $x$ is the generic point of a prime divisor,  
then $C_G(x)$ is cyclic.

\end{enumerate}

\end{lemma}

\begin{proof}
(i) This follows from the isomorphism 
of local rings 
$\cO_{Z,z} \simeq \cO_{X,x}^{C_G(x)}$
in view of the linear reductivity of 
$C_G(x)$.

(ii) Likewise,  the linear reductivity of $N_G(x)$ 
and the isomorphism  
$\cO_{W,w} \simeq \cO_{X,x}^{N_G(x)}$
imply that 
$\kappa(w) = \kappa(x)^{N_G(x)}
= \kappa(x)^{N_G(x)/C_G(x)}$.
As $N_G(x)/C_G(x)$ acts faithfully on $x$, 
the desired assertion follows from
Galois theory.

(iii) This is a special case of 
\cite[Exp.~V, Prop.~2.2]{SGA1})
and can be proved directly as follows:
$\pi$ induces an isomorphism
$\widehat{\cO}_{Y,y} \simeq
\widehat{\cO}_{X,G \cdot x}^G$. Moreover,
$G \cdot x$ is the disjoint union of the
$g \cdot x$, where $g \in G/N_G(x)$.
Thus, we have a $G$-equivariant isomorphism 
of semi-local rings
$\widehat{\cO}_{X,G \cdot x} \simeq
\prod_{g \in G/N_G(x)}
\widehat{\cO}_{X,g \cdot x}$.
Taking $G$-invariants, we obtain
$\widehat{\cO}_{Y,y} \simeq
\widehat{\cO}_{X,x}^{N_G(x)}$.
Moreover, 
$\cO_{X,x}^{N_G(x)} \simeq \cO_{W,w}$;
this yields the assertion.

(iv)The morphism $\tau$ is \'etale at $z$
by (ii),  and hence $\omega_{\tau}$
is invertible at that point; moreover, 
$s_{\tau}$ is a trivializing section 
in view of Lemma \ref{lem:cantors}. 
Also, $\eta$ is \'etale at $w$ by (iii),  so that
$\omega_{\eta}$ is invertible at that point. 
Moreover,  $s_{\eta}$ is a trivializing section, 
since its formation commutes with flat base 
change.  This implies the assertions in view 
of the isomorphism 
$\omega_{\psi} \simeq 
\omega_{\tau} \otimes \tau^*(\omega_{\eta})$
identifying $s_{\psi}$ with
$s_{\tau} \otimes \tau^*(s_{\eta})$.

(v) The group $C_G(x)$ acts faithfully
on the local ring $\cO_{X,x}$ and stabilizes 
the powers $\fm^n_x$ of the maximal ideal. 
Since $\bigcap_{n \geq 1} \, \fm^n_x = 0$,
the induced action on some quotient
$\cO_{X,x}/\fm^n_x$ is faithful as well.
By linear reductivity, 
it follows that the induced action on some
subquotient $\fm_x^m/\fm_x^{m+1}$
is faithful.  As $\fm_x^m/\fm_x^{m+1}$ is 
a vector space of dimension $1$ over 
$\kappa(x)$,  we obtain an injective 
homomorphism 
$C_G(x) \to \kappa(x)^{\times}$. 
\end{proof}

We may now prove Theorem \ref{thm:linred}
in the case where $G$ constant.  Let $D$ be 
a prime divisor in $X$,  with generic point $x$.  
By (\ref{eqn:adj}) and Lemma \ref{lem:const} (iv),
the multiplicity of $D$ in $s_{X/Y}$ equals 
that in $s_{X/Z}$.  The latter multiplicity equals
$\vert C_G(x) \vert -1$ in view of 
Theorem \ref{thm:can} together with
Lemma \ref{lem:const} (i), (v). 
On the other hand,  denoting by $E$
(resp.~$F$) the image of $D$ in $Y$ 
(resp.~$Z$),  we see that $F$ occurs
in $\psi^*(E)$ with multiplicity $1$
(as $\psi$ is \'etale at $z$),  and $D$ occurs
in $\varphi^*(F)$ with multiplicity
$\vert C_G(x) \vert$ (by Theorem
\ref{thm:ls}).  So $D$ occurs in 
$\pi^*(\Delta_Y) - G \cdot \Delta_X$ 
with multiplicity $\vert C_G(x) \vert -1$
as desired.

Finally, we handle the general case
(where $G^0$ is diagonalizable and $\pi_0(G)$ 
is constant of order prime to $p$).
Recall the factorization of $\pi$ as
\begin{equation}\label{eqn:fact}
X \stackrel{\varphi}{\longrightarrow}
Z = X/G^0 \stackrel{\psi}{\longrightarrow}
Y = X/G = Z/\pi_0(G).
\end{equation}
Moreover,  $X$ is a $G^0$-normal variety on 
which $G^0$ acts generically freely, 
and $Z$ is a normal variety on which 
$\pi_0(G)$ acts generically freely 
(Lemmas \ref{lem:fact} and \ref{lem:faithful}).
Thus,  $\psi$ is a tamely ramified Galois 
cover with group $\pi_0(G)(k)$.
We may further assume that $X$ is strongly
reductive relative to $G^0$.

Let  again $D$ be a prime divisor in $X$
with generic point $x$ and set $E = \pi(D)$,  
$F =\varphi(D)$ with generic points $y$,  $z$.
Then $F$ occurs in $\psi^*(E)$ with
multiplicity $\vert C_{\pi_0(G)}(z) \vert$
by the above step.  Moreover,  we have
\[ \varphi^*(F) = \vert H(x) \vert \, G^0 \cdot D \]
by Theorem \ref{thm:ls}.  Thus,
$\pi^*(\Delta_Y) - G \cdot  \Delta_X$
equals 
\[ \big( \vert H(x) \vert \, 
\vert C_{\pi_0(G)}(z) \vert -1 \big) \, G^0 \cdot D \]
in a neighborhood of $x$.  On the other hand,
we have 
\[ \div(s_{X/Y}) = \div(s_{X/Z}) + 
\varphi^* \div(s_{Y/Z}) \]
by (\ref{eqn:adj}),  and
\[ \div(s_{X/Z}) = \big( \vert H(x) \vert -1 \big)
G^0 \cdot D \]
in a neighborhood of $x$ (Theorem 
\ref{thm:can}),  whereas
\[ \div(s_{Y/Z}) =  
\big( \vert C_{\pi_0(G)}(z) \vert -1 \big) \, F \]
in a neighborhood of $z$ (by the above step
again).  As a consequence, $G^0 \cdot D$
occurs in  $\div(s_{X/Y})$ with multiplicity 
\[  \vert H(x) \vert -1 
+ \vert H(x) \vert \,  \big( \vert C_{\pi_0(G)}(z) \vert -1 \big)
= \vert H(x) \vert \,  \vert C_{\pi_0(G)}(z) \vert -1.  \]
So $\div(s_{X/Y})$ and 
$\pi^*(\Delta_Y) - G \cdot \Delta_X$ have the same
multiplicity along $G^0 \cdot D$.

To complete the proof,  it suffices to show
that $G^0 \cdot D$ coincides with $G \cdot D$
in a neighborhood of $x$. 
Since $\pi_0(G)$ is constant,  there exists
a finite purely inseparable extension $K/k$
such that 
$G_K = G^0_K \rtimes \pi_0(G)_K
= \coprod_{g \in G(K)} G^0_K \,  g$.
Therefore,  we have
$G_K \cdot D_K = \bigcup_{g \in G(K)}
G^0_K \cdot  g \cdot D_K$.
Moreover,  $D_K$ is irreducible
(possibly nonreduced).  Denoting by
$x'$ its generic point,  the generic points
of $G_K \cdot D_K$ are exactly the
$g \cdot x'$,  where $g \in G(K)$.
As a consequence,  $G^0_K \cdot D_K$
coincides with $G_K \cdot D_K$ in a
neighborhood of $x'$.  Taking the schematic
image under the projection $\pr: X_K \to X$,
$D_K \to D$,  $x' \to x$ yields the desired
assertion.
\end{proof}

Next,  we assume that $k$ is algebraically closed 
and $X$ is a curve. Then $Y = X/G$
is a smooth curve,  and $\pi : X \to Y$
is flat l.~c.~i.~by Lemma \ref{lem:flatlci}.
We now extend parts of Theorems
\ref{thm:local} and  \ref{thm:hurwitz} to this setting:

\begin{proposition}\label{prop:lr}

\begin{enumerate}
\item[{\rm (i)}]  The group $\Stab_G(x)$ is cyclic
for any $x \in X(k)$.

\item[{\rm (ii)}] We have
\[ 
\div(s_{X/Y}) = 
\sum \big( \vert \Stab_G(x) \vert \, -1 \big) 
\, G \cdot x 
\]
(sum over the $G$-orbits of $k$-rational 
points). 
\end{enumerate}

\end{proposition}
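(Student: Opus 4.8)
The plan is to deduce both parts from the formula already established in Theorem \ref{thm:linred}, combined with the infinitesimal local structure theory of Section \ref{sec:curves}. Since $k$ is algebraically closed I would first pass to the case where \emph{$G^0$ is diagonalizable and $\pi_0(G)$ is constant} (of order prime to $p$), as in Remark \ref{rem:linred}, and record the factorization $X \stackrel{\varphi}{\longrightarrow} Z = X/G^0 \stackrel{\psi}{\longrightarrow} Y$ of (\ref{eqn:fact}). Here $Z$ is a normal, hence smooth, curve (Lemma \ref{lem:normal}) and $\psi$ is a tamely ramified Galois cover with group $\pi_0(G)(k)$. For $x \in X(k)$ with images $z = \varphi(x)$ and $y = \pi(x)$, the key bookkeeping device is the exact sequence
\[ 1 \longrightarrow \Stab_{G^0}(x) \longrightarrow \Stab_G(x) \longrightarrow \Stab_{\pi_0(G)}(z) \longrightarrow 1, \]
whose surjectivity I would justify by noting that $\varphi$ is a homeomorphism, so any lift in $\pi_0(G)(k)$ of an element fixing $z$ already fixes the unique $k$-point $x$ above $z$. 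In particular $\vert \Stab_G(x) \vert = \vert H(x) \vert \cdot \vert C_{\pi_0(G)}(z) \vert$, with $H(x) = \Stab_{G^0}(x)$.

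Part (ii) is then essentially a transcription of Theorem \ref{thm:linred} to curves. That theorem gives $\div(s_{X/Y}) = \pi^*(\Delta_Y) - G \cdot \Delta_X$, and its proof computes the multiplicity of $G^0 \cdot D = G \cdot D$ in $\div(s_{X/Y})$ to be $\vert H(x) \vert \cdot \vert C_{\pi_0(G)}(z) \vert - 1$. For a $k$-point $x$ this divisor is the orbit $G \cdot x$, and the displayed exact sequence rewrites the multiplicity as $\vert \Stab_G(x) \vert - 1$; free orbits contribute nothing. This step I expect to be routine once Theorem \ref{thm:linred} and the stabilizer sequence are in place.

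For part (i) the strategy is to exhibit a \emph{faithful character} of $\Stab_G(x)$, which forces it into $\bG_m$ and hence to be cyclic. Since $\cT_X$ is invertible and $G$-linearized, $\Stab_G(x)$ acts on the one-dimensional fibre $\cT_X(x)$ through a character $\chi : \Stab_G(x) \to \bG_m$. First I would check that $\chi \vert_{\Stab_{G^0}(x)}$ is faithful: applying the tangent computation of Corollary \ref{cor:tangent} to the $G^0$-action (for which $X$ is $G^0$-normal by Remark \ref{rem:linred}), the cyclic group $\Stab_{G^0}(x) = H(x) \simeq \bfmu_{p^s}$ acts on $\cT_X(x)$ with weight $-\nu$, where $\nu$ is prime to $p$ and thus generates $\Lambda(H(x))$. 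Viewing $\chi$ as a homomorphism into the abelian group $\bG_m$ then forces $\Stab_{G^0}(x)$ to be \emph{central}: for $c \in \Stab_G(x)$ and $m \in \Stab_{G^0}(x)$ one has $\chi(c m c^{-1} m^{-1}) = 1$, and faithfulness of $\chi$ on the normal subgroup $\Stab_{G^0}(x)$ yields $c m c^{-1} = m$. On the other hand $\Stab_{\pi_0(G)}(z)$ acts faithfully on the tangent line of the smooth curve $Z$ at $z$ (tame inertia), so it is cyclic of order prime to $p$. Thus $\Stab_G(x)$ is a central extension of a cyclic prime-to-$p$ group by the cyclic $p$-group $\bfmu_{p^s}$; a central extension of a cyclic group is abelian, an abelian linearly reductive group over the algebraically closed $k$ is diagonalizable, and its character group is an extension of $\bZ/p^s$ by $\bZ/\vert \Stab_{\pi_0(G)}(z) \vert$ of coprime orders, hence cyclic. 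Therefore $\Stab_G(x)$ is cyclic.

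The main obstacle I anticipate is precisely this passage from the infinitesimal part to the whole group: a priori $\Stab_G(x) \simeq \bfmu_{p^s} \rtimes C$ could be non-abelian, and excluding this rests on the centrality argument, whose delicate input is that $\Stab_{G^0}(x)$ acts on the \emph{one-dimensional} $\cT_X(x)$ with weight prime to $p$. Since Corollary \ref{cor:tangent} is stated for $G = \bfmu_{p^r}$, I would need to verify that its tangent computation applies to the cyclic stabilizer $H(x)$ sitting inside a possibly non-cyclic diagonalizable $G^0$; this in turn relies on the local model $A = A^H[f]$ of Proposition \ref{prop:local}, where $f$ has weight $\nu$ generating $\Lambda(H(x))$, together with the resulting description of $\Der_B(A)$. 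Everything else is either formal group theory or a direct reading of Theorem \ref{thm:linred}.
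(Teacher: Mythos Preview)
Your argument is correct. Part (ii) is the same as the paper's: both read the multiplicity $\vert H(x)\vert \cdot \vert C_{\pi_0(G)}(z)\vert - 1$ off the proof of Theorem~\ref{thm:linred} and rewrite it as $\vert \Stab_G(x)\vert - 1$ via the semidirect-product description of the stabilizer. For part (i) the two proofs agree on the overall shape (reduce to showing $\Stab_{G^0}(x)$ is central in $\Stab_G(x)$, then conclude cyclicity from coprime cyclic factors), but establish centrality differently. The paper works inside the fibre algebra $\bar A$: the largest graded ideal $\bar I$ is generated by an eigenvector $\bar f$ of weight $\nu$, and since any $g\in \Stab_{\pi_0(G)}(x)$ must send $\bar f$ to a unit multiple, one gets $g\cdot\nu - \nu \in \Lambda\big(G^0/\Stab_{G^0}(x)\big)$, hence $g$ acts trivially on $\Lambda(\Stab_{G^0}(x))$. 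You instead package the dual weight $-\nu$ as the character $\chi$ of $\Stab_G(x)$ on the one-dimensional tangent fibre $\cT_X(x)$, and invoke the clean formal fact that a character into the commutative $\bG_m$ which is a monomorphism on a normal subgroup forces that subgroup to be central. Your route is a bit more conceptual and avoids the auxiliary passage to $H = G^0 \rtimes \Stab_{\pi_0(G)}(x)$; the paper's route stays entirely within the local-algebra framework of Section~\ref{sec:lcs} and does not need the invertibility of $\cT_X$. Finally, your stated obstacle dissolves: an infinitesimal diagonalizable group scheme acting faithfully on a curve is automatically cyclic, so $G^0 \simeq \bfmu_{p^r}$ (as the paper records at the start of its own proof), and Corollary~\ref{cor:tangent} applies to the $G^0$-action verbatim.
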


\begin{proof}
(i) Consider again the factorization (\ref{eqn:fact}) 
of $\pi$,  where $Z = X/G^0$  is a smooth curve.  
Also,  note that $G^0 \simeq \bfmu_{p^r}$ 
for some $r$.

Let $x \in X(k)$ and set 
$z = \varphi(x)$, $y = \psi(z) = \pi(x)$.
Since $k$ is algebraically closed,  we have
$G = G^0 \rtimes \pi_0(G)$ and 
$\Stab_G(x) = \Stab_{G^0}(x) \rtimes
\Stab_{\pi_0(G)}(x)$.  Moreover,
$\Stab_{G^0}(x) \simeq \bfmu_{p^s}$
for some $s$, and 
$\Stab_{\pi_0(G)}(x) = \Stab_{\pi_0(G)}(z)$ 
as $\varphi$ is bijective and 
$\pi_0(G)$-equivariant.  Also, 
$\Stab_{\pi_0(G)}(z)$ is cyclic
by Lemma \ref{lem:const} (v).

To complete the proof,  
it suffices to check that $\Stab_{\pi_0(G)}$
centralizes $\Stab_{G^0}(x)$,  since 
both groups are cyclic of coprime orders.
Consider the subgroup scheme
$H = G^0 \rtimes \Stab_{\pi_0(G)}(x)$
of $G$,  and the corresponding factorization
$X \to X/H \to Y$ of $\pi$.  Note that $X$ is 
an $H$-normal curve (since $H \supset G^0$); 
in particular, $X/H$ is a smooth curve.  Also, 
$\Stab_G(x) \subset H$.  Thus, we may replace
$G$ with $H$, and assume that 
$\pi_0(G)$ \emph{fixes} $x$. 

We now use some constructions and results
from Section \ref{sec:lcs}. Clearly, we have 
$G\cdot x = G^0 \cdot x$, and $G$ fixes $z$. 
Thus, $A = \cO_{X,G \cdot x} = \cO_{X,x}$
is a local algebra equipped with a $G$-action,
as well as its quotient algebra 
$\bar{A} = \cO(X_z)$. The ideal $\bar{I}$ of 
$G \cdot x$ in $\bar{A}$ is $G$-stable and
generated by a $G^0$-eigenvector $\bar{f}$ 
of weight $\nu \in \Lambda(G^0)$. The action 
of $\pi_0(G)$ on $G^0$ by conjugation yields
an action on $\Lambda(G^0)$ by group
automorphisms,  which stabilizes the subgroup 
$\Lambda \big(G^0/\Stab_{G^0}(x) \big)$
(as $\pi_0(G)$ normalizes $\Stab_{G^0}(x)$).
For any $g \in \pi_0(G)(k) = G(k)$,  we have 
$g \cdot \bar{f} = u_g \bar{f}$ for some
$u_g \in \bar{A}^{\times}$. Moreover,
$g \cdot \bar{f}$ is a $G^0$-eigenvector
of weight $g \cdot \nu$,  and hence 
$u_g$ is a $G^0$-eigenvector of weight 
$g \cdot \nu - \nu$.  By Proposition 
\ref{prop:fibre},  we have
$g \nu - \nu \in 
\Lambda \big(G^0/\Stab_{G^0}(x) \big)$
and the character group of 
$\Stab_{G^0}(x)$ is generated by the 
restriction of $\nu$.  As a consequence,
$\pi_0(G)$ acts trivially on this
character group.  Equivalently,
$\pi_0(G)$ centralizes $\Stab_{G^0}(x)$
as desired.

(ii) By the proof of Theorem \ref{thm:linred},
the orbit $G \cdot x$ (viewed as a Cartier divisor)
occurs in $\div(s_{X/Y})$ with multiplicity 
$\vert \Stab_{G^0}(x) \vert \,  
\vert \Stab_{\pi_0(G)}(z) \vert -1$.
Moreover,  
$\vert \Stab_{G^0}(x) \vert \,  
\vert \Stab_{\pi_0(G)}(z) \vert = 
\vert \Stab_{G^0}(x) \vert \,  
\vert \Stab_{\pi_0(G)}(x) \vert = 
\vert \Stab_G(x) \vert$.
\end{proof}

\begin{remark}\label{rem:fattening}
Still assuming $k$ algebraically closed,
we discuss the structure of finite linearly reductive 
group schemes $G$ acting faithfully on a curve.
Recall that  we have 
$G \simeq \bfmu_{p^r} \rtimes H$,
where $H$ is constant of order prime to $p$.  
As the automorphism group of 
$\bfmu_{p^r}$ is isomorphic to 
$(\bZ/p^r\bZ)^{\times} \simeq 
(\bZ/p\bZ)^{\times} \times \bZ/p^{r-1}\bZ$,
it follows that the conjugation action of $G$ on 
$\bfmu_{p^r}$ is given by a character 
\[ \chi : G \longrightarrow (\bZ/p\bZ)^{\times}, \]
which may be identified with the weight of the $G$-action 
on its Lie algebra.  In particular,  we have an exact 
sequence of finite group schemes
\begin{equation}\label{eqn:ext}
1 \longrightarrow \bfmu_{p^r} \times F
\longrightarrow G
\stackrel{\chi}{\longrightarrow}
(\bZ/p\bZ)^{\times}, 
\end{equation}
where $F$ (the centralizer of $\bfmu_{p^r}$ in $H$)
is constant of order prime to $p$.

Conversely,  given such an exact sequence,
does $G$ act faithfully on some curve (which may then 
be taken projective and $G$-normal)?
We only know a partial answer to this question, 
in the case where $G = \bfmu_{p^r} \times F$ with $F$ 
as above.  Then by the main result of \cite{MV},  
there exists a smooth projective curve $X$ such that 
$F = \Aut(X)$.   We first construct a faithful 
\emph{rational} action of $\bfmu_{p^r}$ on $X$ 
that  commutes with the $F$-action.

Let $K$ be the function field of $X$,
and $L = K^F$. Choose 
$t \in L \setminus L^p$; then 
$t \in K \setminus K^p$ and hence
we have a decomposition
\[ K = \bigoplus_{m = 0}^{p-1} 
K^p \, t^m, \]
which is a $\bZ/p\bZ$-grading of the
$K^p$-algebra $K$ by
$F$-stable subspaces. In view of
\cite[Cor.~3.4]{Br24}, this yields 
a faithful rational action of 
$\bfmu_p \times F$ on $X$, which extends 
the given $F$-action. More generally,
we obtain a decomposition
\[ K = \bigoplus_{m=0}^{p^r -1}
K^{p^r} \, t^m \]
by induction on $r$, and hence
the $F$-action on $X$ extends
to a faithful rational action of
$\bfmu_{p^r} \times F$. Moreover,
these actions are compatible with
the inclusions 
$\bfmu_{p^r}  \subset \bfmu_{p^{r+1}}$.

By loc.~cit., Cor.~4.4, these rational
actions can be ``regularized'', i.e.,  
for any $r$, there exists 
a $\bfmu_{p^r} \times F$-normal projective 
curve $X_r$ which is equivariantly 
birationally isomorphic to $X$; then
we still have $F = \Aut(X_r)$. 
If $g(X) \geq 2$ then $X_r \neq X$, 
as the automorphism group scheme $\Aut_X$
is \'etale. Under this assumption,
there exists no projective curve $Y$
that is $\bfmu_{p^r}$-normal for infinitely
many values of $r$, and birational to 
$X$: otherwise, the connected 
automorphism group scheme $\Aut^0_Y$ 
is not finite, and hence $Y$ must have
infinitely many automorphisms, 
a contradiction. In particular, 
we obtain infinitely many pairwise
nonisomorphic projective models $X_r$.
\end{remark}

\bibliographystyle{amsalpha}

%

\end{document}